%%%%%%%%%%%%% TODO %%%%%%%%%%%%
%1. Add nonlinear stab argument. (mention intro that updated/simplified/generalized from GMWZ)
%2. Add keywords.
%3. Find quote from FT or BDN on importance of stability question? OR?
%4. AGJ90 ref. missing in bibliography? ``We use the following notion \cite{AGJ90,GZ98}''. 
%\bibitem{AGJ} J. Alexander, R.A. Gardner, and C.K.R.T. Jones,
%{\it A topological invariant arising in the stability analysis of travelling waves},
 %%J. Reine Angew. Math. 410 (1990) 167--212.
%%%%%%%%%%%%% NOTES %%%%%%%%%%%%
%1. KZ changed to amsart std settings 8-1-2024 (to use address, etc. fns. and fit std page)
%%%%%%%%%%%%%%%%%%%%%%%%%%%%%%%

\documentclass[12pt]{amsart}
\usepackage{graphicx}
\usepackage{amssymb}
\usepackage{amsmath}
\usepackage{amsfonts}
\usepackage{amsthm}
\usepackage[english]{babel}
\usepackage[latin1,ansinew]{inputenc}
\usepackage[square,comma,numbers]{natbib}
\usepackage{a4wide}
\usepackage{color}
\usepackage{academicons}
%OLD SETTINGS
%\textwidth = 6.5 in
%\textheight = 9 in
%\oddsidemargin = 0.0 in
%\evensidemargin = 0.0 in
%\topmargin = 0.0 in
%\headheight = 0.0 in
%\headsep = 0.0 in
%\parskip = 0.2in
%\parindent = 0.0in

%\usepackage{mathtools}
%\mathtoolsset{showonlyrefs}

\newcommand{\be}{\begin{eqnarray}}
	\newcommand{\ee}{\end{eqnarray}}

\swapnumbers
\newtheorem{theo}{Theorem}[section]
\newtheorem{remark}[theo]{Remark}
\newtheorem{lemma}[theo]{Lemma}
\newtheorem{coro}[theo]{Corollary}
\newtheorem{defi}[theo]{Definition}
\newtheorem{prop}[theo]{Proposition}
\newcommand{\R}{\mathbb R}
\newcommand{\C}{\mathbb C}
\newcommand{\N}{\mathbb N}
\newcommand{\mG}{{\mathcal G}}
\newcommand{\mD}{{\mathcal D}}
\newcommand{\mB}{{\mathcal B}}
\newcommand{\mL}{{\mathcal L}}

\newcommand{\mJ}{{\mathcal J}}

\newcommand{\mT}{{\mathcal T}}
\newcommand{\mU}{{\mathcal U}}
\newcommand{\mW}{{\mathcal W}}
\newcommand{\mE}{{\mathcal E}}
\newcommand{\mS}{{\mathcal S}}
\newcommand{\mF}{{\mathcal F}}

\newcommand{\lxi}{{\langle \xi \rangle}}

\newcommand{\eps}{\epsilon}

\newcommand{\Op}{\operatorname{Op}}
\newcommand{\Rep}{\operatorname{Re}}

%%%% Kommandos "von" Moritz
\newcommand{\beq}{\begin{equation}}
	\newcommand{\eeq}{\end{equation}}

\numberwithin{equation}{section}

\begin{document}
	
	\title[Shock profiles in dissipative hyperbolic-hyperbolic systems]
	{Non-linear stability of shock profiles in dissipative hyperbolic-hyperbolic systems}
	
\author[Sroczinski]{Matthias Sroczinski}
\address{University of Konstanz, Konstanz, DE}
\email{matthias.sroczinski@uni-konstanz.de}
\thanks{Research of M.S. was partially supported
	under DFG grant no. FR 822/11-1
	(SPP-2410).}

\author[Zumbrun]{Kevin Zumbrun}
\address{Indiana University, Bloomington, IN 47405}
\email{kzumbrun@iu.edu}
\thanks{Research of K.Z. was partially supported
under NSF grants no. DMS-2154387 and DMS-2206105, the DFG German research network,
and the Humboldt Research Prize.}

\begin{abstract}
We give the first proof of nonlinear stability for smooth shock profiles
of second-order dissipative hyperbolic-hyperbolic systems under the assumption of spectral stability,
showing stability of smooth small-amplitude profiles in dimensions greater than or equal to two.
This class of systems notably includes the two types of causal viscous relativistic gas models
introduced respectively by Freist\"uhler-Temple and Bemfica-Disconzi-Noronha, and (the equivalent
second-order form of) a class of first-order numerical relaxation systems generalizing the well-known 
Jin-Xin relaxation schemes.
A significant technical innovation is a new paradifferential type of nonlinear damping estimate
similar to that used by the first author to study stability of constant states,
allowing the treatment of systems far from the symmetric structure required for the standard 
``Kawashima-type'' energy estimates that are typically used for that purpose.
\end{abstract}

\keywords{stability of shock profiles; dissipative hyperbolic-hyperbolic systems; relativistic gas.}
	
    \maketitle
    
     \tableofcontents
    
    \section{Introduction}

 In this work, we consider time-asymptotic stability of smooth shock profiles of 
 hyperbolic-hyperbolic systems
\begin{equation}\label{hyp-hyp}
    \sum_{j=0}(f^j(u))_{x_j}=\sum_{j,k=0}^d (B^{jk}(u)u_{x_j})_{x_k},
\end{equation}
arising in relativistic gas dynamics incorporating viscosity and or heat conduction 
\cite{FT14,FT17,FT18,FS21,BDN18,BDN22}, with $x_0$ denoting time and $(x_1,\dots, x_d)$ space variables.
Written in more standard form distinguishing time and space variables, 
these appear as a {\it hyperbolic regularization}
\begin{equation}\label{hypreg}
\begin{aligned}
	\partial_t g(u) + \sum_{i=1}^d \partial_{x_i}f^i(u)&= 
	\sum_{i,j=1}^d \partial_{x_j}(B^{ij}\partial_{x_i}u)
	-\partial_t (\mathcal Au_{t
	})+ \sum_{i=1}^d \partial_{x_i}(C^i_0 \partial_t u) \\
	+ \sum_{i=1}^d \partial_t(C^i_1 \partial_{x_i} u),\\
	\qquad u|_{t=0}=\phi, \, &\partial_t u|_{t=0}=\psi
\end{aligned}
\end{equation}
of a first-order system of conservation laws
\begin{equation}\label{inv}
	\partial_t g(u) + \sum_{i=1}^d \partial_{x_i}f^i(u)=0,
\end{equation}
analogous to parabolic regularizations in the classical theory of viscous shock waves.

That is, as described in \cite{FT14}, relativistic transport effects are modeled here not by 
classical diffusion but by a second-order wave operator featuring finite propagation speed.
Indeed, subluminality, or boundedness of first-order characteristic speeds by the speed of 
light, in this context corresponding to a subcharacteristic condition in
the sense of Whitham \cite{W74}, is seen to be an essential ingredient in
stability of constant solutions.
This, together with other {\it dissipativity conditions} identified by Freist\"uhler and Srocinski
has been shown to yield not only spectral but also linear and nonlinear stability of
constant states \cite{FS21,FS25,S19,S24}. 

Existence 
%and nonexistence 
of smooth shock profiles for hyperbolically regularized relativistic
shock waves has been studied in \cite{FT14,FT17,FT18,BDN18,BDN22,P25}.
A very natural, and physically important problem is then to investigate their stability,
as we do here for all dissipative hyperbolic-hyperbolic systems, not necessarily arising from
either relativistic equations or hyperbolic regularization.
More precisely, in the spirit of \cite{ZH98,Z01,Z04,Z07} in the classical case of 
hyperbolic-parabolic shocks, we show that {\it spectral stability}, defined in terms of an 
{Evans function condition}, {\it implies linear and nonlinear stability of small-amplitude
dissipative hyperbolic-hyperbolic shock profiles in dimensions $d\geq 2$.}

\subsection{Main results}\label{s:results}
Specifically, let $u(t,x)=\bar u(x_1)$ be a planar shock solution of \eqref{hypreg} connecting
endstates $u_\pm$ of a Lax-type shock solution of \eqref{inv}:
\begin{equation} 
	\label{asym}
\hbox{\rm
$|\partial_x^k \bar u(x) - u_\pm| \leq e^{-\delta |x|}$ for $x\gtrless 0$, $0\leq k\leq K$,}
\end{equation}
$K$ sufficiently large.
Then, for dimensions $d\geq 2$, under a standard technical assumption \cite{Z01,Z04,Z07,GMWZ05}
on the spectral structure of the Fourier-Laplace symbols of the linearized equations about 
endstates $u_\pm$,
we show for $\| (\phi-\bar u), \psi\|_{W^{1,1}\cap H^s}$ sufficiently small, $s$ sufficienty large,
there exists a global solution $u(t,x)$, $t\geq 0$ of \eqref{hypreg}, satisfying for all $t\geq 0$
and $p\geq 2$
\begin{equation}\label{decayintro}
\begin{aligned}
	\|u(t,\cdot) - \bar u\|_{H^s} &\leq C(1+t)^{-\frac{d-1}{4}},\\
	\|u(t,\cdot) - \bar u\|_{L^p} &\leq C(1+t)^{-\frac{d-1}{2}(1-1/p)};
\end{aligned}
\end{equation}
see Theorem \ref{the:main}, Section \ref{s:5}.
That is, for smooth $L^1$-localized data, we obtain decay at the rate of a $(d-1)$-dimensional 
heat kernel

The standard technical condition mentioned above, having to do with structure of ``glancing sets''
\cite{M00,M01}, is satisfied always for rotationally invariant systems or in dimension $d=2$.
It can be removed by the elegant argument of Nguyen \cite{N09} at the expense of $(1+t)^{1/4}$
decay in $L^p$ and $H^s$ estimates \cite[Thm. 1.8]{N09}.
Similarly, undercompressive and overcompressive shocks may be treated as in \cite{Z04}, under
appropriate assumptions on the shock profile (the latter with some loss of time-algebraic decay)
\cite[Result 5, p. 337]{Z04}.

\subsection{Discussion and open problems}\label{s:disc}
The above results are the first nonlinear stability results for shock waves of hyperbolic-hyperbolic
systems, reducing the study of stability to verification of spectral stability conditions.
They apply to smooth profiles of the relativistic gas dynamic models introduced both by
Freist\"uhler-Temple \cite{FT14,FT17,FT18} and Bemfica-Disconzi-Noronha \cite{BDN18,BDN22}.

Nonlinear stability of constant states for these models has been shown in \cite{FS21,FS25,S24},
for which spectral stability, determined by analysis of the Fourier symbol of the associated
linearized operator, was proven in the course of the analysis.
A very interesting open problem is to verify multi-D spectral stability of small-amplitude 
hyperbolic-hyperbolic shock profiles lying sufficiently near a stable constant state, as done for 
the hyperbolic-parabolic case in \cite{FSz02,PZ04,B24} (1-D) and \cite{FSz10} (multi-D), 
and in the 1-D relaxation case in \cite{PZ04,H03}.
A corresponding 1-D analysis for hyperbolic-hyperbolic shocks has recently been carried 
out by B\"arlin \cite{B24}.

\subsubsection{Method of proof and relations to previous work}\label{s:method}
As noted in \cite{W74,JX95} and elsewhere (e.g., \cite{FS21,B24}), the class of hyperbolic systems
\eqref{hyp-hyp} is closely related to the topic of relaxation.
Indeed, \eqref{hyp-hyp} may be considered as a second-order relaxation system with equilibrium manifold,
expressed in the phase space $(u,\partial_t u)$
$$
\partial_t g(u) + \sum_i \partial_{x_i} f^i(u)=0
$$
corresponding to the first-order hyperbolic system \eqref{inv}.
For the semilinear case, with $g(u)$ linear, \eqref{hyp-hyp} is in fact equivalent to
a first-order relaxation system of form generalizing that introduced in \cite{JX95} for
the purpose of numerical approximation. See Section \ref{s:regsec} for further discussion.

The general setting of our analysis thus neighbors those of shock stability
for second-order parabolically regularized, or ``viscous'' conservation laws, 
and first-order relaxation systems.
Accordingly, our method of analysis follows the general framework set up in
\cite{ZH98,Z01,Z04,Z07} for parabolic conservation laws, and extended in 
\cite{MaZ02,MaZ05,KwZ09,Kw11} to relaxation systems,
consisting of bounded-frequency resolvent estimates obtained by conjugation to constant coefficients,
together with high-frequency estimates obtained by WKB-type methods, converted to linear
time-evolution estimates by inverse Laplace transform/semigroup bounds,
and augmented with ``nonlinear damping''-type energy estimates sufficient to close a nonlinear iteration.

Multi-D stability of small-amplitude shock profiles for symmetrizable first-order 
relaxation  systems was shown by Kwon \cite{Kw11} following the pointwise resolvent
bound approach of \cite{Z01,Z04}.
Here, we follow an alternative approach by Kreiss symmetrizer estimates carried out in \cite{GMWZ05}
in the analysis of multi-D shock stability for parabolic conservation laws with Laplacian viscosity,
that is particularly convenient for our needs.
Indeed, we are able to directly ``plug in'' to that framework a number of our estimates 
in order to obtain the ultimate bounds we require.

Where the arguments largely differ is in the high-frequency regime and in the nonlinear damping estimates
used to control regularity.
For the semilinear parabolic case treated in \cite{GMWZ05}, of course, there is no issue with regularity.
However, for the more general quasilinear and or partial parabolic cases treated in \cite{Z04},
and in the relaxation analysis of \cite{Kw11}, these two types of estimate are treated in similar
fashion using assumptions of symmetrizability and Kawashima-type coupling on the structure
of the underlying nonlinear system, {\it assumptions that manifestly do not hold}
in the relativistic gas applications of our main interest.
Here, following a strategy proposed in \cite{Z24},
the estimates are obtained {\it without structural assumptions} on the equations, other
than dissipativity assumptions on the Fourier symbol of the constant state,
using paradifferential methods like those introduced in \cite{S24}, 
for the treatment of stability of constant solutions.

The latter is perhaps the most significant technical innovation of the paper, opening the door
to a variety of new applications in all three areas: parabolic conservation laws, relaxation,
and hyperbolic-hyperbolic systems, among others.
In particular, through the connection to semilinear relaxation systems described
in Appendix \eqref{s:connection}, our results imply also a number of new results on multi-D shock
stability in the semilinear relaxation setting, removing restrictive requirements of simultaneous 
symmetrizability.  

It is worth emphasizing that stability of constant solutions at endstates $u_\pm$ is {\it necessary}
for spectral stability of shock profiles, by the standard fact that essential spectrum boundaries for
asymptotically constant traveling waves are given by the rightmost envelope of the dispersion relations
at the endstates.
For small-amplitude shocks this implies by continuity stability of constant states all along the profile,
which is the condition needed to carry out the type of paradifferential damping estimate done here.
Thus, {\it at least for small shocks, this construction is essentially optimal},\footnote{
Assuming, as here, and for many types of systems, that stability of constant solutions persists
under perturbation, in some cases involving additional structural assumptions such as 
strict hyperbolicity.} 
sufficing to treat nonlinear and high-frequency damping estimates for general systems,
whenever spectral stability holds.

Moreover, as described in Appendix \ref{s:regsec}, even in the case that simultaneous
symmetrizability holds, we gain a bit more information, obtaining multi-D relaxation stability 
assuming $L^1$-localization only of the macroscopic, or ``equilibrium'' variable $u$ and 
{\it spatial derivatives} of the ``microscopic'', or  transient variable $v$, rather than on $u$ and $v$ 
together, a scenario reminiscent of behaviour in modulation of periodic waves \cite{JNRZ14},
with here $v$ playing the role of phase shift and $u$ of localized perturbation.

\subsubsection{Future directions}\label{s:future}
Both for our analysis here and the relaxation analysis of \cite{Kw11}, the restriction to small-amplitude
shocks is needed only for high-frequency resolvent and nonlinear damping estimates, the latter
implying the former as described in \cite{Z24}. A very important open problem for both first- and
second-order relaxation is the development of nonlinear damping estimates for large-amplitude shocks in multi-D, which would
then immediately give nonlinear stability for arbitrary amplitudes, assuming that spectral
stability holds. Further discussion of this problem may be found in \cite{Z24}.

Another very interesting open problem is the companion problem of
verification of spectral stability, either 1- or multi-D,
for {\it large-amplitude} profiles of hyperbolic-hyperbolic systems, either numerically or 
analytically. For the specific systems of relativistic isentropic gas dynamics, one might attempt to apply
the energy methods of \cite{MN85} (barotropic pressure law) and \cite{MW10,HH20} (general pressure law)
used to show analytically large-amplitude stability in the 1-D nonrelativistic case.
For large multi-D profiles, one could look to the numerical Evans function studies of
\cite{HLZ17,YZ25} and references therein.

We note that our multi-D stability argument does not automatically imply 1-D stability, since the linearized decay rates are to slow (in fact non-decaying) to close the standard nonlinear iteration. 
These rates being dominated by our estimates of the evolution of the shock front,
a very interesting problem is to remove this slower-decaying part by phase decomposition
as in \cite{MaZ02,MaZ05}, establishing 1-D nonlinear stability for smooth hyperbolic-hyperbolic
shock fronts, again under the assumption of spectral stability.
A problem of interest in any of the three setting, parabolic conservation laws and first- and second-order
relaxation, is to separate out the phase (``front behavior'') in the multi-D setting, showing faster
decay in the far field.
It is expected (see discussions of \cite{Z01,Z04}) that the front also decays faster 
than a $(d-1)$-dimensional heat kernel when properly analyzed, which would give an improved
decay rate on the entire solution (again, somewhat reminiscent of the periodic modulation \cite{JNRZ14}).

Other interesting directions for future exploration are:

$\bullet$ Analysis of the small relaxation-time limit, analogous to the small viscosity limit
treated in \cite{GMWZ04}.

$\bullet$ In the relativistic setting, analysis of the classical limit as light speed goes to infinity.

$\bullet$ Singular points, subshocks, and time-asymptotic behavior,
singular points having been shown in \cite{P25} to arise in the traveling-wave ODE
for hyperbolic-hyperbolic shocks in the models of \cite{BDN18,BDN22},
similarly as in shallow-water flow \cite{YZ25} and extended thermodynamics \cite{We95}.

\subsection{Plan of the paper}
In Section \ref{s:2}, we describe the setting of the problem, and introduce
the main tools to be used in the analysis.
In Section \ref{s:3}, we derive linearized time-evolutionary bounds,
assuming the necessary resolvent bounds, using the basic framework of \cite{GMWZ05}.
In Section \ref{s:4}, we derive the corresponding  resolvent bounds. For small and intermediate (i.e., bounded)
frequencies we use Kreiss symmetrizers and conjugation to constant coefficients. In Section \ref{s:5}, we first establish the crucial nonlinear damping estimate, using para-differential techniques like those used in \cite{S24} for the treatment
of stability of constant states, as suggested speculatively for small shocks in \cite{Z24},
 and then combine the above ingredients to 
complete the proof of nonlinear stability by an iteration scheme following the general approach
of \cite{Z04,Z07}.
%%%%%%%%%%%%%%%
In Appendix \ref{s:paradiff}, we recall for completeness the paradifferential calculus tools
needed for the analysis.
In Appendix \ref{s:connection}, we discuss further the connection between hyperbolic-hyperbolic
systems, first-order relaxation systems, and the Jin-Xin relaxation scheme \cite{JX95}.

   \medskip
{\bf Acknowledgement.} Thanks to Heinrich Freist\"uhler, Johannes B\"arlin, and Valentin Pelham
for numerous helpful discussions. This work was initiated during and greatly aided by two visits
of the second author to University of Konstanz, the first funded by the DMR network and the second
by the DMR and Humboldt foundation; he gratefully thanks the University of Konstanz and Humboldt
foundation for their hospitality and support.
    
\section{Setting}\label{s:2}

\subsection{Notation.} We write $\mF$ for the Fourier-transform on $L^2(\R^d)$ and set 
$$\Lambda=\mF\lxi\mF^{-1},~\lxi=(1+|\xi|^2)^{\frac12}.$$
We denote by  
$$
	\|v\|_m=\|v\|_{H^m}=\|\Lambda^m v\|_{L^2},~m \in \R,~~\|v\|:=\|v\|_0
$$
the standard norms on $L^2$-based Sobolev spaces $H^m(\R^d)$, also including non-integer indices, as well as
$$\langle v_1,v_2 \rangle_m=\langle v_1,v_2\rangle_{H^m}$$
for the corresponding scalar products. For $V=(v_1,v_2) \in H^{m_1} \times H^{m_2}$, $m_1,m_2 \in \R$, we write
$$\|V\|_{m_1,m_2}^2=\|v_1\|_{m_1}^2+\|v_2\|_{m_2}^2.$$
For a Matrix $K \in \C^{n \times n}$, we write $K^*$ for its adjoint and $\Rep K=\frac12(K+K^*)$ for its real (hermitian) part. We denote by $\operatorname{Gl}_n$ the space of invertible $\C^{n \times n}$-matrices.\\
For a semi-simple eigenvector $\mu$ of $K$ (of multiplicity $\alpha$) and an (orthogonal) basis
$r_1,\ldots,r_\alpha$ of the corresponding eigenspace $E_\mu$, we call the-$\C^{n \times \alpha}$ matrix
$$R_\mu=\begin{pmatrix} r_1 & \ldots & r_\alpha\end{pmatrix}$$
a (orthogonal) right projector on $E_\mu$ of $\mu$, and 
$$L_\mu=\begin{pmatrix} l_1 \\ \vdots \\l_\alpha \end{pmatrix}$$
a dual left projector, where $l_1,\ldots, l_\alpha \in \C^{1 \times n}$ is the basis of $E_\mu'$ dual to $r_1,\ldots,r_\alpha$. 

\subsection{Assumptions on the equations.} Throughout this paper, we consider the Cauchy problem \eqref{hypreg} with smooth coefficient matrices $\mathcal A, B^{jk}, C^j_0, C^j_1, A^j$, $j,k=1,\ldots,d$.

With $A^j(u)=Df^j(u)$, $C^j=C^j_0+\tilde C^j_1$, we define the Fourier-symbols with respect to the space variables as
\begin{align*}B(u,\xi)=\sum_{j,k=1}^dB^{jk}\xi_{j}\xi_k,~~C(u,\xi)=\sum_{j=1}^dC^{j}(u)\xi_j,\\
	A(u,\xi)=\sum_{j=1}^dA^j(u)\xi_j,~~ (u,\xi) \in \R^n \times \R^d.
\end{align*}
Below, for $M \in \C^{n \times n}$, set $\tilde M(u)=\mathcal A^{-1}(u)M$. 
Our notion of hyperbolicity is as follows.

{\bf Condition (H).} Both sides of \eqref{hypreg} are hyperbolic in the sense that:

\begin{itemize}
	\item[(H$_B$)] For all $u \in \R^n$, $\mathcal A(u)$ is symmetric positive definite and  the matrix family
	$$i\begin{pmatrix}
		0 & I\\
		-\tilde B(u,\omega) & i\tilde C(u,\omega)
	\end{pmatrix},~~u \in \R^n, \omega \in S^{d-1},$$
	permits a symbolic symmetrizer.
	\item[(H$_A$)] For all $u \in \R^n$, $A^0(u)$ is symmetric positive definite and the matrix family
	$$A^0(u)^{-1}A(u,\omega),~~u \in \R^n, \omega \in S^{d-1},$$
	permits a symbolic symmetrizer.
\end{itemize}

Recall that for a smooth matrix family $K: \R^n \times S^{d-1} \to \C^{n \times n}$ a symbolic symmetrizer for $K$ is a smooth family of hermitian uniformly positive definite matrices $S \in C^\infty(\R^n \times S^{d-1}, \C^{n \times n})$ bounded as well as all of its derivatives such that $(S(u,\omega)K(u,\omega))^*=S(u,\omega)K(u,\omega)$.

Note that (H$_B$) implies the local-well-posedness of the Cauchy-problem associated to \eqref{hyp-hyp} in $L^2$-based Sobolev-spaces.
\begin{prop}[\cite{T91}, Proposition 5.3.A\footnote{Taylor assumes strict hyperbolicity of the smybol $\mB(u,\omega)$. However, inspecting the proof reveals that this property is only used to construct a symbolic symmetrizer for $\mB(u,\omega)$. Thus, the weaker assumption (H$_B$) is sufficient.}]
	\label{prop:wp}
	\label{prop:locwell}
	Let $s>d/2+1$. For all $(\phi,\psi) \in H^{s+1} \times H^s$, there exists $T>0$ such that \eqref{hypreg} admits a unique solution $u \in C^l([0,T),H^{s+1-l})$, $l=0,\ldots,s$. Additionally,  the mapping $(\phi,\psi) \to (u(t),u_t(t))$ is continuous on $H^s\times H^{s+1}$ and the solution persists as long as
	$\|(u,u_t)\|_{s+1,s}$ is bounded. 
\end{prop}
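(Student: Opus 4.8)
The plan is to treat \eqref{hypreg} as a quasilinear \emph{symmetrizable} second-order hyperbolic system and run the classical iteration scheme, the only non-standard point being that (H$_B$) supplies a pseudodifferential rather than an exact symmetrizer. Since $\mathcal A(u)$ is symmetric positive definite with smooth entries and the term $\partial_t g(u)$ is only first order in $t$, one can solve \eqref{hypreg} for the top time-derivative and rewrite it equivalently as
\[
u_{tt}=\sum_{i,j}\tilde B^{ij}(u)\,\partial_{x_i}\partial_{x_j}u+\sum_i\tilde C^i(u)\,\partial_{x_i}u_t+\mathcal R(u,u_t,\nabla_x u),
\]
where $\tilde B^{ij}=\mathcal A^{-1}B^{ij}$, the $\tilde C^i$ are the $\mathcal A^{-1}$-multiples of the mixed second-order coefficients, and $\mathcal R$ collects everything of order $\le1$. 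Setting $U:=(\Lambda u,u_t)$, $\Lambda=\Op(\lxi)$, converts this into a first-order pseudodifferential system $\partial_tU=P(u)U+(\text{order }0)$ whose order-one principal symbol is $|\xi|\,M(u,\omega)$, $\omega=\xi/|\xi|$, with
\[
M(u,\omega)=\begin{pmatrix}0 & I\\ -\tilde B(u,\omega) & i\tilde C(u,\omega)\end{pmatrix}
\]
evaluated along $u=u(t,x)$. By (H$_B$), $iM$ carries a symbolic symmetrizer $S(u,\omega)$ — uniformly positive Hermitian, smooth and bounded together with all derivatives, with $S\cdot iM$ Hermitian, equivalently $SM+M^*S=0$ — so that $\Op(S)\Op(P(u))$ is skew-adjoint modulo order zero. (Note (H$_A$) is not used here.)

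Next I would solve the frozen-coefficient linear problem $\partial_tU=P(v)U+F$ for a reference state $v$ with $(v,v_t)$ bounded in $H^{s+1}\times H^s$, via the usual Galerkin or parabolic-regularization construction, and derive the a priori bound from the symmetrized energy $E_s(U):=\langle\Op\!\big(S(v(t,x),\xi/|\xi|)\big)\Lambda^sU,\ \Lambda^sU\rangle$. Using the paradifferential calculus of Appendix \ref{s:paradiff} to control the self-adjointness defect of $\Op(S)$, the commutator $[\Op(S),\Lambda^s]$, and the composition error $\Op(S)\Op(p)-\Op(Sp)$ with $p$ the symbol of $P(v)$, together with Moser/Kato--Ponce product and commutator estimates for the lower-order coefficients — all of acceptable order and continuously dependent on $\|v\|_{H^s}$ precisely because $s>d/2+1$ — one expects
\[
\frac{d}{dt}E_s(U)\le C\big(\|(v,v_t)\|_{s+1,s}\big)\,E_s(U)+C\,\|\Lambda^sF\|_{L^2}\,E_s(U)^{1/2}.
\]
Since $\Op(S)$ is uniformly positive and $\Lambda^sU=(\Lambda^{s+1}u,\Lambda^su_t)$, one has $E_s(U)\simeq\|(u,u_t)\|_{s+1,s}^2$, so Gronwall gives existence, uniqueness, and the stated bound on an interval $[0,T]$ with $T$ depending only on the size of the data.

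Then I would iterate: $u^{(k+1)}:=$ solution of the linear problem with coefficients frozen at $u^{(k)}$, $u^{(0)}\equiv\phi$. The bound above holds uniformly in $k$ on a common $[0,T]$, and contraction follows in the low norm $H^s\times H^{s-1}$ (equivalently $H^1\times L^2$) by the same energy estimate applied to successive differences, after shrinking $T$ if needed. Interpolating the uniform high-norm bound against low-norm convergence produces a limit $u$ with $(u,u_t)\in L^\infty([0,T),H^{s+1}\times H^s)$; differentiating the equation in $t$ and using it to express $\partial_t^lu$ through spatial derivatives of $\partial_t^{l-1}u$ and $\partial_t^{l-2}u$ upgrades this to $u\in C^l([0,T),H^{s+1-l})$ for $l=0,\dots,s$; a Bona--Smith-type regularization turns weak-in-time continuity into strong continuity and gives continuous dependence on $(\phi,\psi)$; uniqueness is the low-norm estimate for the difference of two solutions; and the continuation criterion is read off the differential inequality, since as long as $\|(u,u_t)\|_{s+1,s}$ stays finite the factor $C(\|(u,u_t)\|_{s+1,s})$ stays finite and the $H^{s+1}\times H^s$ norm cannot blow up in finite time, after which the local result reapplies.

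The hard part is not conceptual but the bookkeeping: because (H$_B$) only provides a symbolic symmetrizer, every energy manipulation must be carried out inside the paradifferential calculus, carefully tracking the order and the $u$-dependence of all calculus remainders — which is exactly where boundedness of $S$ with all its derivatives and the threshold $s>d/2+1$ enter. As the footnote indicates, this is Taylor's Proposition 5.3.A once one observes that the strict hyperbolicity of $\mB(u,\omega)$ assumed there is used only to construct such a symmetrizer, which (H$_B$) provides directly.
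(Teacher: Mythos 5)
Your proposal is correct and follows essentially the same route as the paper: the paper proves this proposition simply by invoking Taylor's Proposition 5.3.A, observing in the footnote that Taylor's strict hyperbolicity is used only to build a symbolic symmetrizer, which (H$_B$) supplies directly, and your sketch is precisely a reconstruction of that argument (reduction to a first-order system in $(\Lambda u,u_t)$, pseudodifferential symmetrized energy estimates, frozen-coefficient iteration with low-norm contraction, Bona--Smith for continuity and continuous dependence, and the continuation criterion from the differential inequality). Nothing further is needed.
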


\begin{remark}
	(H$_A$) (respectively (H$_B$)) in particular implies that all eigenvalues of $A^0(u)^{-1}A(u,\omega)$ ($i\mB(u,\omega)$) are real  and semi-simple. On the other hand, the latter implies the existence of a symbolic symmetrizer for $(A^0(u))^{-1}A(u,\omega)$ ($i\mB(u,\omega)$) if additionally the multiplicities of the eigenvalues do not depend on $(\omega,u)$. In this case, we call $A^0(u)^{-1}A(u,\omega)$ ($i\mB(u,\omega)$) hyperbolic with constant multiplicities.
\end{remark}

Lastly, we consider the criteria for uniform stability of a homogeneous reference state as introduced in \cite{FS21,S24}.

{\bf Condition (D).} At a constant state $u_*$, the matrices $A^j(u_*), B^{jk}(u_*)$ satisfy three conditions:
\begin{itemize}
	\item[\quad(D1)] 
	There exists a symbolic symmetrizer $S$ for $(A^0)^{-\frac12}A(A^0)^{-\frac12}$ such that for  every $\omega\in S^{d-1}$, all restrictions, as a quadratic form, of
	\begin{align*}
		W_1&=S(u_*,\omega)^{1/2}A^0(u_*)^{-\frac12}\Big(- B(u_*,\omega)
		+\mathcal A(A^0(u_*)^{-1}A(\bar
		u,\omega))^2\\
		&\quad \quad\quad-C(u_*,\omega)A^0(u_*)^{-1} A(u_*, \omega)\Big)A^0(u_*)^{-\frac12}S(u_*,\omega)^{-1/2}
	\end{align*}
	on the eigenspaces $E=J_E^{-1}(\C^n)$ of
	\begin{equation} 
		\label{w00}
		W_0(\omega)=-S(u_*, \omega)^{1/2} A^0(u_*)^{-\frac12}A(u_*,\omega)A^0(u_*)^{-\frac12} S(u_*,\omega)^{-1/2}
	\end{equation}
	are uniformly negative in the sense that
	$$
	J_E^*\Rep(W_1)J_E\le -\bar c\
	J_E^*J_E
	\quad\text{with one }\bar c>0.
	%, j=1,\ldots,J.
	$$
	\item[(D2)]
	There exists a symbolic symmetrizer $\mathcal S$ for $i\mathcal B$ such that   
	for every $\omega\in S^{d-1}$, all restrictions, as a quadratic form, of
	$$
	\mathcal W_1=\mathcal S(u_*,\omega)^{1/2}
	\begin{pmatrix}
		0&0\\
		-i\tilde A(u_*,\omega)&- \tilde A^0(u_*,\omega)
	\end{pmatrix}
	\mathcal S(u_*,\omega)^{-1/2}
	$$
	on the eigenspaces $\mathcal E=\mathcal J_{\mathcal E}^{-1}(\C^{2n})$ of
	$$
	\mathcal W_0=
	\mathcal S(u_*,\omega)^{1/2}
	\mathcal B(u_*,\omega)
	\mathcal S(u_*,\omega)^{-1/2}
	$$
	are uniformly negative in the sense that
	$$
	\mathcal J_{\mathcal E}^*(\Rep \mW_1)\mathcal J_{\mathcal E}\le -\bar c\ \mathcal J_\mathcal E^*\mathcal J_\mathcal E
	\quad\text{with one }\bar c>0.
	$$
	\item[(D3)] All solutions
	$
	(\lambda,\xi)\in\C\times \R^d\setminus\{0\}$
	of the dispersion relation,
	\begin{equation}
		\label{disp}
		\det(\lambda^2 \mathcal A(u_*)+B(u_*,\xi)-i\lambda C(u_*,\xi)+\lambda A^0(u_*)+iA(u_*,\xi))=0,
	\end{equation} for \eqref{hypreg} at $u_*$ have
	{\rm Re}$(\lambda)<0$.
\end{itemize}

\begin{defi}
	\label{def:stable}
	We call $u_* \in \mU$ stable if $u_*$ satisfies (D). 
\end{defi}

Note that stability of $u_*$ in the sense above indeed implies (nonlinear) asymptotic stability under small perturbations of the rest-state $u_*$ (\cite{S24}).

\begin{remark}
	\label{rem:d}
	(D1), (D2), (D3) arise from the fact that the following holds for solutions $(\lambda,\xi) \in \C \times \R^{d}\setminus\{0\}$ of \eqref{disp} in polar coordinates $\xi=\rho\omega$, $\rho>0$, $\omega \in S^{d-1}$:
	\begin{itemize}
		\item[(i)] For $\rho$ sufficiently small $\lambda=\lambda(\xi)$ either satisfies
		$$\lambda(\xi)=\lambda_{0}+O(\rho)$$
		for some eigenvalue $\lambda_0$ of $-\tilde A^0$ or
		$$\lambda(\xi)=i\lambda_1(\omega)\rho+\lambda_{2}(\omega)\rho^2+O(\rho^3),$$
		where $\lambda_1(\omega)$ is an eigenvalue of $W_0(\omega)$ and $\lambda_2(\omega)$ is an eigenvalue of $(J_E^*W_1J_E)(\omega)$.
		\item[(ii)]
		For $\rho$ sufficiently large
		$$\lambda(\xi)=i\mu_1(\omega)\rho+\mu_2(\omega)+O(\rho^{-1}),$$
		where $\mu_1(\omega)$  is an eigenvalue of $\mathcal W_0(\omega)$ and $\lambda_2(\omega)$ is an eigenvalue of $\mathcal (J_{\mE}^*\mW_1\mJ_{\mE})(\omega)$.
	\end{itemize}
	In particular
	\begin{equation}
		\label{imp}
		|\Im(\lambda(\xi))|\le C|\xi|.
	\end{equation}
	for some $C>0$ independent of $\xi$ and if (D) holds we get
	\begin{equation}
		\label{rep}
		\Re \lambda(\xi) \le -c\kappa(|\xi|)~\text{ for }~\kappa(\rho)=\frac{\rho^2}{1+\rho^2}.
	\end{equation}
\end{remark}

We close this subsection by giving an alternative formulation for (D1) that will be used in Section \ref{s:4}. 

\begin{remark}
	\label{rem:d1} 
	Consider an eigenvalue $\mu$ of $\check A=(A^0)^{-\frac12}A(A^0)^{-\frac12}$ (of multiplicity $\alpha$). Then, for $S$ as in (D1), let $\tilde R_\mu$ be an orthogonal right projector on the eigenspace of $\mu$ with respect to the matrix $S^{\frac12}\check AS^{-\frac12}$. Then (D1) implies that the real part of
	$$\tilde R_\mu^tS^{\frac12}(A^0)^{-\frac12}(-B+\mu^2\mathcal A -\mu C)(A^0)^{-\frac12}S^{-\frac12}\tilde R_\mu$$
	is negative. Clearly, $R_\mu=S^{-\frac12}\tilde R_\mu$ is a right projector on the eigenspace of $\mu$ with respect to $\check A$ and $\tilde R_\mu^tS^{\frac12}$ is the corresponding left-projector. \\
	In conclusion, (D1) implies that for each eigenvector $\mu$ of $\check A$ there exists a right projector $R_\mu$ such that the real part of
	$$L_\mu(A^0)^{-\frac12}(-B+\mu^2\mathcal A-\mu C)(A^0)^{-\frac12}R_\mu$$
	is negative. Additionally, if $(A^0)^{-1}A(\omega)$ (and thus also $\check A(\omega)$) is hyperbolic with constant multiplicities, the projector can locally be chosen smoothly with respect to $\omega \in S^{d-1}$.
\end{remark}

\subsection{Assumptions on the profile.}	Fix a planar shock solution $u(t,x)=\bar u(x_1)$ of \eqref{hypreg} connecting
endstates $u_\pm$ of a Lax-type shock solution of \eqref{inv}:
\begin{equation*} 
	\hbox{\rm
		$|\partial_x^k \bar u(x) - u_\pm| \leq e^{-\delta |x|}$ for $x\gtrless 0$, $0\leq k\leq K$,}
\end{equation*}
$K$ sufficiently large.
In particular, $\bar{u}$ satisfies the travelling wave ODE
\begin{equation} 
	\label{tw}
	B^{11}(\bar{u})\bar{u}'=f^1(\bar{u})+B^{11}(u_-)u_--f^1(u_-).
\end{equation}
In the following, we write $A_{\pm}(\xi)$ for $A(u_\pm,\xi)$, $B_{\pm}(\xi)$ for $B(u_{\pm},\xi)$ etc..

We impose the following standard assumptions on the profil, which are in particular satisfied by the shocks studied in \cite{FT14,FT17,FT18,BDN18,BDN22}.
\begin{enumerate}
	\item[(S1)] $(u_-,u_+,0)$ is a Lax-shock w.r.t. the original non-viscous system
	$$\sum_{j=0}^d (f^j(u))_{x_j}=0,$$
	i.e.,
	$\det(A^1_\pm)\neq 0$ and $A^1_+$ (resp. $A^1_-$) has $i_+$ negative (resp. $i_-$ positive) eigenvalues with
	$$i_++i_-=n+1.$$
	\item[(S2)] 
	The family $((A^0_\pm)^{-1}A_\pm(\omega))_{\omega \in S^{d-1}}$ is hyperbolic with constant multiplicities.
	\item[(S3)] 
	For all $x \in \R$, $\omega \in S^{d-1}$,
	$$\det B(\bar u(x),\omega) \neq 0$$
	and $A^1_\pm (B^{11}_\pm)^{-1}$ has no purely imaginary eigenvalue.
	\item[(S4)]
	$u_{\pm}$ are stable states of \eqref{hypreg} in the sense of Definition \ref{def:stable}.
	
\end{enumerate}

To obtain the optimal decay rate $(1+t)^{-(d-1)/4}$, we need an additional assumption on the so called glancing set of the first order operator (cf. \cite{GMWZ05,K11}). To this end, set 
$$p_{\pm}(\tau,\xi_1,\eta):=\det(\tau A^0_{\pm}+\xi_1 A^1_\pm+\sum_{j=2}^d\eta_j A^j_{\pm}).$$
By (S2),  $p_\pm(\cdot,\xi_1,\eta)$ has $1 \le \bar l_\pm \le n$ locally analytic roots
$\tau=a_l^{\pm}(\xi_1,\eta)$, $1 \le l \le \bar l_{\pm}$. We define the glancing set $\mG$ as the set of all $(\tau,\eta) \in \R \times \R^{d-1}\setminus\{0\}$ such that, for least one choice of sign and some $1 \le l \le \bar l_{\pm}$, $\tau-a_l^\pm(\cdot,\eta)$ has a root $\xi_1$ with multiplicity equal to or larger than $2$, i.e., suppressing the $\pm$ superscript, $(\underline\tau,\underline\eta) \in \mG$ if and only if there exists $\underline \xi_1 \in \R$, $1 \le l \le \bar l$, such that 
$$\underline \tau=a_l(\underline \xi_1,\underline \eta)~ \text{ and } ~ \partial_{\xi_1} a_l(\underline\xi_1,\underline\eta)=0.$$
As $\underline \tau-a_l(\cdot,\underline \eta)$ is an analytic function, there exists $\bar s \ge 2$ with
$$\partial_{\xi_1}^{s} a_l(\underline\xi_1,\underline\eta)=0~ \text{ for } 1 \le s \le \bar s-1,~~ \partial_{\xi_1}^{\bar s} a_l(\underline \xi_1, \underline \eta) \neq 0.$$
Clearly, the implicit function theorem implies, locally, the existence of a smooth function $\tilde \xi_1$ with $\tilde \xi_1(\underline \eta)=\underline \xi_1$ such that
$$ \partial_{\xi_1}^{\bar s-1} a_l(\xi_1, \eta)=0 ~\Leftrightarrow~ \xi_1=\tilde \xi_1(\eta).$$
We now make the assumption that $\tilde \xi_1(\eta)$ persists as a root of multiplicity $\bar s$ of $\tau-a_l(\cdot,\eta)$.
\begin{enumerate}
	\item[(S5)] For all $(\underline \tau,\underline \eta) \in \mG$ with $\underline \xi_1$, $a_l$, $\tilde \xi_1$ as above,
	$$ \partial_{\xi_1}^s a_l(\tilde\xi_1(\eta),\eta)=0,~~1 \le s \le \bar s-1$$
	for $\eta$ close to $\underline \eta$. 
\end{enumerate}

By compactness (cf. \cite{GMWZ05}), there exist smooth functions $ \xi_{11},\ldots,\xi_{1\bar r}$, such that $\mG$ is the union of the finitely many smooth surfaces 
$$(\tau,\eta)=(a_{lr}(\eta),\eta):=\big( a_l^\pm(\xi_{1r}(\eta),\eta),\eta\big),~~ 1\le l \le \bar l, 1\le r \le \bar r,$$
on which the root of  $\tau-a_l^\pm(\cdot,\eta)$ is $\xi_{1r}(\eta)$ and has multiplicity $s_r \ge 2$.

\begin{remark}
	As already stated  in the introduction, (S5) is automatically satisfied in dimension $d=2$ (\cite{GMWZ05}). In Section \ref{s:5.2}, we show that the arguments of Nguyen \cite{N09} can be applied to the present situation, meaning we can remove (S5) at the expense of $(1+t)^{\frac14}$ decay rate.
\end{remark}

To avoid technical difficulties in the proof of Proposition \ref{prop:dec}, we impose the following condition.
\begin{enumerate}
	\item[(S6)]  There exist symmetrizer $S_\pm(u,\omega)$ for $A^0(u)^{-\frac12}A(u,\omega)A^0(u)^{-\frac12}$, such that for all eigenspaces $E=J_E^{-1}(\C^n)$ of $W_0$, $J_EW_1J_E^*$ is symmetric. $W_0$, $W_1$ being defined in condition (D1) for $u_*=u_\pm$. 
\end{enumerate}

\begin{remark}
	(S6) should be viewed as a purely technical condition, which is easily verified in all important applications. It is trivially satisfied if $A^j,B^{jk}$, $j,k=0,\ldots,d$, are simultaneously symmetrizable (as in \cite{FT14,FT17,FT18,BDN18}) or if all eigenvalues of $J_E^*W_1J_E$ are semi-simple with constant multiplicities (as in \cite{BDN22}). 
\end{remark}

\subsection{Consistent splitting and Evans function.}  In order to formulate the Evans function conditions, we study the linearization of \eqref{hypreg} about the profile $\bar u$ given by

\begin{equation}
	\label{line1}
	\begin{aligned}
		&\mathcal A(\bar u)u_{tt}-\sum_{j=1}^d(B^{jk}(\bar u)u_{x_j})_{x_k}-\sum_{j=1}^d(C^j(\bar u) u_t)_{x_j} \\
		&\quad + \tilde A^0(\bar u)u_t + \sum_{j=1}^d(\tilde A^j(\bar u)u)_{x_j}=f,\\
	\end{aligned}
\end{equation}
where 
\begin{align*}
	(\tilde A^0(\bar u))_{lm}&=(A^0(\bar{u}))_{lm}-(\partial_q(C^1_1)_{lm}+\partial_m(C^{1}_1)_{lq}(\bar u))(\bar u')^q,\\
	(\tilde A^j(\bar u))_{lm}&=(A^j(\bar{u}))_{lm}-\partial_mB^{1j}_{lq}(\bar u)(\bar u')^q,~~j=1,\ldots,d.
\end{align*}
For notational purposes, we define the symbols 
\begin{align*}
	B_{12}(u,\eta)&=\sum_{j=2}^d \eta_jB^{j1}(u),\quad B_{22}(u,\eta)=\sum_{j,k=2}^dB^{jk}(u)\eta_j\eta_k\\
	C_{2}(u,\eta)&=\sum_{j=2}^dC^j(u)\eta_j,\quad A_2(u,\eta)=\sum_{j=2}^dA^j(u)\eta_j,~~(u,\eta) \in \R^n \times \R^{d-1}
\end{align*}
($\tilde A_2$ analogously), and write $(x,y_1,\ldots,y_{d-1})$ instead of $x=(x_1,\ldots,x_d)$.
Then applying Laplace-transform with respect to $t$ and Fourier-transform with respect to $y$ in \eqref{line1}, gives
\begin{equation} 
	\label{flt}
	(B^{11}(\bar{u})\hat u_{x})_x+(S(\bar{u},\lambda,\eta) \hat u)_{x}-(s(\bar{u},\lambda,\eta)\hat u=\hat f,
\end{equation}
where $\lambda=i\tau+\gamma \in \C$, $\eta \in \R^{d-1}$ are the dual variables to $t$, $y$ and
\begin{align*}
	S(\bar{u},\lambda,\eta)&=iB_{12}(\bar{u},\eta)+\lambda C^1(\bar{u})-\tilde A^1(\bar{u}),\\
	s(\bar{u},\lambda,\eta)&=\lambda^2\mathcal A-i\lambda C_2(\bar{u},\eta)+B_{22}(\bar{u},\eta)\\
	&\quad+\lambda \tilde A^0(\bar{u})+i\tilde A_2(\bar{u},\eta).
\end{align*}
In variables $(\hat u,\hat u_x)$, \eqref{flt} corresponds to the  first-order system of differential equations
$$\begin{pmatrix}
	\hat u \\ \hat v
\end{pmatrix}_x-\begin{pmatrix}
	0 & B^{11}(\bar{u})^{-1}\\
	s(\bar{u},\eta,\lambda)-S(\bar u,\lambda,\eta)_x & -S(\bar u,\lambda,\eta)B^{11}(\bar u)^{-1}\end{pmatrix}\begin{pmatrix}
	\hat u \\ 
	\hat v\end{pmatrix}=\begin{pmatrix}
	0 \\ \hat f
\end{pmatrix},$$
which we abbreviate as
\begin{equation} 
	\label{lin2}
	V_x(x)-G(x,\zeta)  V(x)=  \hat F(x).
\end{equation}

We introduce $\zeta=(\tau,\gamma,\eta)\in \R \times \R \times \R^{d-1}\equiv \C \times \R^{d-1}$, and use polar coordinates
$$\zeta=\rho\hat\zeta, ~~\rho \ge 0, \hat \zeta \in S^d.$$ With slight abuse of notation we also write $\zeta=(\lambda,\eta) \in \C \times \R^{d-1}$. 

In order to show the existence of an Evans function for \eqref{lin2}, we need to analyse the behaviour of the coefficient matrices at the end states $G_\pm(\zeta)$. To this end, note that due to \eqref{asym},  $\tilde A^j_\pm \equiv A^j_\pm$.

We use the following notion \cite{AGJ90,GZ98}.  
%TO DO: Reference Alexander et al

\begin{defi}
	An open subset $\Omega$ of $\C \times \R^{d-1}$ is called a domain of consistent splitting for the shock profile $\bar u$ if for any $\zeta \in \Omega$, the matrices  $G_\pm(\zeta)$ have no purely imaginary eigenvalue and the respective dimension of the stable (and thus unstable) subspace $S_\pm(\zeta)$ ($U_\pm(\zeta)$)  of  $G_+(\zeta)$, $G_{-}(\zeta)$ coincide  (and are then necessarily constant for $\zeta \in \Omega$).
\end{defi}

Next, for $c>0$, define
$$M_c=\{\zeta \in \R^{d+1}: \gamma \ge -c\kappa(|(\eta,\tau)|)\}\setminus\{0\},~~\kappa(\rho)=\frac{\rho^2}{1+\rho^2},$$
and denote by
$$S^d_c=\{(\rho,\hat \zeta) \in (0,\infty) \times S^d:\hat \gamma \ge -c\kappa(\sqrt{\rho}|(\hat \tau,\hat \eta)|)\}$$
the corresponding set in polar coordinates. We also set $S^{d}_+=S^d \cap \{\hat \gamma \ge 0\}$. 
\begin{prop}
	\label{prop:spec1}
	\begin{enumerate}
		\item[(i)] There exists, $c>0$ such that  $M_c$ is a domain of consistent splitting for $\bar u$.
		\item[(ii)] $0$ is a semi-simple eigenvalue of multiplicity $n$ of $G_\pm(0,0)$. The other eigenvalues (fast modes) are those of $-
		A^1_\pm(B^{11}_\pm)^{-1}$.
		\item[(iii)] Let $\hat \gamma\ge c_1>0$ for some positive constant $c_1$. Then, an eigenvalue $\mu$ of $G_\pm(0,\hat \zeta)$ splitting from $0$ admits the expansion
		$$\mu(\rho,\hat \zeta)=\rho\mu_1(\hat \zeta)+O(\rho), ~~\rho \to 0,$$
		where $\mu_1$ is an eigenvalue of 
		$$-(A^1_\pm)^{-1}(\hat \lambda A^0_\pm+i A_{2\pm}(\hat \eta)).$$
		In particular, $\Rep \mu_1 \neq 0$.
	\end{enumerate}
	
\end{prop}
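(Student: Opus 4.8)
The plan is to treat the three statements essentially separately, exploiting the block structure of $G_\pm(\zeta)$ and the hyperbolicity/dissipativity hypotheses (S1)--(S4).

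For (i), I would first recall that, by construction, $\mu \in \C$ is an eigenvalue of $G_\pm(\zeta)$ with eigenvector $(w, B^{11}_\pm{}^{-1}(\mu B^{11}_\pm w - \dots))$ precisely when $\mu$ solves the scalar-matrix dispersion relation obtained by substituting $\hat u = e^{\mu x} w$ into the constant-coefficient version of \eqref{flt}, i.e.
\begin{equation*}
\det\big(\mu^2 B^{11}_\pm + \mu S_\pm(\lambda,\eta) - s_\pm(\lambda,\eta)\big)=0.
\end{equation*}
Writing out $S_\pm$ and $s_\pm$ and relabeling $\xi_1 = -i\mu$, this is exactly the dispersion relation \eqref{disp} evaluated at $u_\pm$ with full spatial frequency $\xi = (\xi_1,\eta)$ and temporal frequency $\lambda$. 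Hence $G_\pm(\zeta)$ has a purely imaginary eigenvalue $\mu = i\xi_1$, $\xi_1\in\R$, if and only if $(\lambda,(\xi_1,\eta))$ solves \eqref{disp}; by Condition (D3) (part of (S4)) this forces $\Rep\lambda = \gamma < 0$, and more precisely, by the quantitative bound \eqref{rep} of Remark~\ref{rem:d}, $\gamma \le -c\kappa(|(\xi_1,\eta)|) \le -c\kappa(|(\tau,\eta)|)$ since $|\xi_1|$ is bounded below in terms of $|(\tau,\eta)|$ on the relevant set — wait, more carefully: one uses that $|\Im\lambda|=|\tau|\le C|\xi|$ from \eqref{imp}, so a purely imaginary eigenvalue with $\gamma > -c\kappa(|(\tau,\eta)|)$ is impossible once $c$ is small. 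Thus on $M_c$ there are no purely imaginary eigenvalues of either $G_\pm$. Continuity of spectral projections then gives that $\dim S_\pm(\zeta)$ is locally constant on the connected set $M_c$; equality of $\dim S_+$ and $\dim S_-$ is obtained by evaluating at a convenient point of $M_c$ with $\gamma$ large, where a standard perturbation/homotopy argument (tracking slow modes to the first-order symbol, as in (iii), and fast modes to $-A^1_\pm(B^{11}_\pm)^{-1}$, as in (ii)) together with the Lax condition (S1) and (S3) counts the dimensions and shows they agree. I expect the main technical obstacle here to be the bookkeeping that translates the abstract consistent-splitting count into the Lax count $i_++i_-=n+1$ via (S1),(S3): one must verify that the stable subspace of $G_\pm$ for $\gamma\gg 1$ has dimension $n + (\#\text{negative eigenvalues of } A^1_+(B^{11}_+)^{-1})$ or similar, and reconcile the "$+1$" coming from the profile equation \eqref{tw} being a traveling wave.

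For (ii), I would set $\zeta=0$. Then $S_\pm(0,0) = -A^1_\pm$ and $s_\pm(0,0)=0$, so
\begin{equation*}
G_\pm(0,0)=\begin{pmatrix} 0 & (B^{11}_\pm)^{-1}\\ 0 & A^1_\pm(B^{11}_\pm)^{-1}\end{pmatrix},
\end{equation*}
(using $\tilde A^1_\pm = A^1_\pm$ from the remark after \eqref{lin2}, and that the $x$-derivative term in $G$ vanishes at constant coefficients). This is block upper-triangular, so its spectrum is $\{0\}$ (with geometric and algebraic multiplicity $n$, the kernel being $\C^n\times\{0\}$ and the image of $G$ spanning a complementary $n$-dimensional space since $B^{11}_\pm$ is invertible by (S3)) together with the eigenvalues of $A^1_\pm(B^{11}_\pm)^{-1}$; the latter equal those of $-(-A^1_\pm(B^{11}_\pm)^{-1})$, i.e. the "fast modes" are eigenvalues of $-A^1_\pm(B^{11}_\pm)^{-1}$ up to the sign convention in the statement — here I'd just match signs carefully against the definition of $G$. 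Semi-simplicity of the zero eigenvalue is immediate from the explicit Jordan structure since $G_\pm(0,0)$ restricted to $\C^n\times\{0\}$ is zero and this is exactly the kernel.

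For (iii), this is a regular (rank-one type) perturbation computation. With $\gamma = \Rep\lambda$ bounded below by $c_1 > 0$, the $n$-fold eigenvalue $0$ of $G_\pm(0,\hat\zeta\cdot 0)$ splits under the perturbation in $\rho$. I would apply standard degenerate perturbation theory: restrict $\rho^{-1}(G_\pm(\rho\hat\zeta) - G_\pm(0))$ to the kernel $\C^n\times\{0\}$ via the spectral projection. Computing $\partial_\rho G_\pm$ at $\rho=0$ and projecting onto $\ker G_\pm(0)$ along the complementary invariant subspace, the $\rho$-linear term of the splitting eigenvalues is governed by the eigenvalues of $-(A^1_\pm)^{-1}(\hat\lambda A^0_\pm + i A_{2\pm}(\hat\eta))$ — this is exactly the symbol of the reduced first-order system \eqref{inv} linearized at $u_\pm$, with the factor $(A^1_\pm)^{-1}$ coming from solving for the $x_1$-derivative. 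Finally, $\Rep\mu_1 \neq 0$: suppose $\mu_1 = i\xi_1$ were purely imaginary; then $\hat\lambda$ together with the spatial frequency $(\xi_1,\hat\eta)$ would solve the \emph{first-order} dispersion relation $\det(\hat\lambda A^0_\pm + i(\xi_1 A^1_\pm + A_{2\pm}(\hat\eta)))=0$, forcing $\Rep\hat\lambda = \hat\gamma = 0$ by hyperbolicity of $A^0{}^{-1}A$ (eigenvalues real, Remark after Condition (H)), contradicting $\hat\gamma \ge c_1 > 0$. The only subtlety I anticipate is making sure the reduced matrix is indeed $-(A^1_\pm)^{-1}(\hat\lambda A^0_\pm + iA_{2\pm}(\hat\eta))$ and not some conjugate of it — resolved by writing the $O(\rho)$ expansion of $s_\pm$ and $S_\pm$ explicitly and using $\tilde A^j_\pm = A^j_\pm$, plus noting that the "$\partial_x S$" correction term vanishes at the endstates because $\bar u' \to 0$ exponentially.
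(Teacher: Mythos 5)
Your treatment of (ii) and (iii) is correct and essentially identical to the paper's (including your correct reading that the fast modes of $G_\pm(0,0)$ are the eigenvalues of $A^1_\pm(B^{11}_\pm)^{-1}$, i.e.\ the sign in the statement is a typo), and the first half of (i) — no purely imaginary eigenvalues of $G_\pm(\zeta)$ on $M_c$ via the dispersion relation \eqref{disp} together with \eqref{imp}, \eqref{rep} — is exactly the paper's argument.

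The genuine gap is in the second half of (i), the equality $\dim S_+(\zeta)=\dim S_-(\zeta)$. First, your "convenient point with $\gamma$ large" cannot be analyzed by "tracking slow modes to the first-order symbol as in (iii) and fast modes as in (ii)": that slow/fast splitting is the $\rho\to 0$ asymptotics, while for $\gamma\to\infty$ all $2n$ modes scale like $\gamma$ and converge (after rescaling $\mu=\gamma\nu$) to the roots of $\det(\nu^2B^{11}+\nu C^1-\mathcal A)=0$, which has nothing to do with $-(A^1)^{-1}(\hat\lambda A^0+iA_{2})$ or with $A^1(B^{11})^{-1}$. Second, even if you instead count at small $\rho$ with $\hat\gamma>0$, the bookkeeping you hope to extract from (S1),(S3) does not close: by the Lax condition the numbers of stable slow modes at $u_+$ and $u_-$ are $n-i_+$ and $i_-=n+1-i_+$, i.e.\ they differ by exactly one, so you would additionally need the number of stable eigenvalues of $A^1_+(B^{11}_+)^{-1}$ to exceed that of $A^1_-(B^{11}_-)^{-1}$ by one — and nothing in the endstate hypotheses (S1),(S3) supplies this relation; it is information that must be transported from $u_-$ to $u_+$. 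The paper supplies it by a continuity argument in $x$ along the profile: fix $\tau=\eta=0$ and let $\gamma\to\infty$; for every $x$ the rescaled eigenvalues of $G(x,\zeta)$ approach the roots of $\det(-\mathcal A(\bar u(x))+\mu^2B^{11}(\bar u(x))+\mu C^1(\bar u(x)))=0$, which have no purely imaginary solutions by (H$_B$) (else $\lambda=1$ would be an eigenvalue of $\mathcal B(\bar u(x),\nu,0)$), so $G(x,\zeta)$ has no center subspace for any $x$ and the stable dimension is independent of $x$; letting $x\to\pm\infty$ gives $\dim S_+=\dim S_-$. This use of hyperbolicity of the second-order symbol along the whole profile (not just at the endstates) is the missing idea in your proposal; your Lax-count route, as you yourself suspected, cannot be completed from (S1),(S3) alone.
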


\begin{proof}
	
	(i) We first show that for some $c>0$, $G_+(\zeta)$ has no purely imaginary eigenvalues if $\zeta \in M_c$.  To this end, suppose that for all $c>0$ there exists $\zeta \in M_c$ and an eigenvalue $\mu=i\xi_1$, $\xi_1 \in \R$ of $G_+(\zeta)$. For a corresponding eigenvector $V=(v_1,v_2) \in \R^n  \times \R^n$ and $\xi=(\xi_1,\eta) \in \R \times \R^{d-1}$,  we find 
	$$(\lambda^2 \mathcal A+B(\xi)-i\lambda C(\xi)+\lambda A^0+iA(\xi))v_1=0.$$
	But due to \eqref{imp} and \eqref{rep}, each solution $(\xi,\lambda)=(\xi_1, \eta, \gamma,\tau)$ of \eqref{disp} satisfies
	\begin{equation}
		\label{gamma}
		\hat \gamma \ge -\tilde c\kappa(|(\eta, \tau)|).
	\end{equation}
	for some $\tilde c>0$, which is a contradiction to our assumption. The same argumentation also holds for $G_-$. Due to continuity with respect to $x$, $\dim U_+=\dim U_-$ is shown if for one $\zeta \in M_c$, $G(x,\zeta)$ has no purely imaginary for all $x \in \R$. To this end, note that for $\tau=\eta=0$ and $\gamma \to \infty$ the eigenvalues $\mu$ of $G(x,0,\gamma,0)$ converge to those of (suppressing the argument $\bar u$)
	$$\begin{pmatrix}
		0 & (B^{11})^{-1}\\ \mathcal A &  -C^1(B^{11})^{-1}(\bar u)
	\end{pmatrix},$$
	which are the solutions of
	$$\det(-\mathcal A+\mu^2 B^{11}+\mu C^1) =0.$$
	In particular, for $\mu=i\nu$, $\nu \in \R$,
	$$\det (-\mathcal A-\nu^2 B^{11}+i\nu C^1) = 0.$$
	This means that $\lambda=1$ is an eigenvalue of $\mB(\bar u,\nu,0_{\R^{d-1}})$, a contradiction to (H$_B$). Thus, the dimension of the stable space of $G(x,\gamma,0,0)$ is indeed constant.
	Assertion (ii) follows directly from $s(\bar u,0)=0$, $S(\bar u, 0)=-A^1(\bar u)$.
	
	(iii) Let $\gamma=\Rep \lambda \ge c_1>0$. If $\mu$ is an eigenvalue of $G_+$, then, for some $v\in \C^n$,
	\begin{equation}
		\label{mu0}
	\left[\mu^2B^{11}+\rho \mu(\hat \lambda C^1+i B_{12}(\hat \eta))+\rho^2B'(\hat \lambda,\hat \eta)-\rho (\hat \lambda A^0+iA_2(\hat \eta))-\mu A^1\right]v=0.
	\end{equation}
	Considering the expansions $\mu=\mu_1\rho+O(\rho)$, $v=v_0+O(\rho^2)$ gives
	$$(\mu_1A^1+\hat \lambda A^0(u)+i A_2(u,\hat \eta))v_0=0.$$
	Now suppose $\mu_1=i\xi_1$ for $\xi_1 \in \R$.   Then $\det(\hat\lambda +i(A^0)^{-1}A(\xi_1,\hat \eta))=0$ for $(\xi_1,\hat \eta) \in \R^d$ and $\lambda \in \C$ with $\Rep \lambda \neq 0$, which contradicts (H$_A$). 
\end{proof}

From now on let $N$ ($1 \le N \le 2n)$ denote the dimension of $S_{\pm}$, the stable space of $G_\pm$ and $M=2n-N$ the dimension of $U_{\pm}$, the unstable space of $G_\pm$.  Then by Proposition \ref{prop:spec1} and the
Gap Lemma \cite{GZ98,MZ05,Z01}, the following can be proven just as in the parabolic, hyperbolic parabolic or relaxation case \cite{Z01, Z04, KwZ09, Kw11}.

\begin{prop}
	\label{prop:ev}
	Let $c>0$ be sufficiently small. Then for  $\zeta \in M_c\setminus\{0\}$, there exist bases of solutions
	$$\{V_1^+(x,\zeta),\ldots,V_N^+(x,\zeta)\}~/~\{V_1^-(x,\zeta),\ldots,V_{M}^-(x,\zeta)\}$$
	to \eqref{lin2} with $\hat F=0$, spanning the stable/unstable manifold of $G(\bar u(x),\zeta)$ at $x=+\infty/-\infty$ such that 
	$$\mathcal D(\zeta)=\det(U_1^+,\ldots,U_n^+,U_1^-,\ldots,U_n^-)|_{x=0} $$ 
	is analytic on $M_c$ and continuously extendible to $\overline {M_c}$.
	
	In addition, $D(\rho,\hat \zeta)=\mD(\rho\hat\zeta)$ is analytic on $S^d_c$, and $D$, $\partial_\rho D$ extend continuously to $\overline{S^d_c}$. In particular,
	$$D \in C(S^d_c,C^1([0,\infty)).$$
\end{prop}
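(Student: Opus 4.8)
\textbf{Proof strategy for Proposition \ref{prop:ev}.}

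The plan is to follow the now-standard construction of the Evans function for traveling waves, adapting the arguments of \cite{Z01,Z04,KwZ09,Kw11} to the present hyperbolic-hyperbolic setting, the key inputs being the consistent-splitting property and the spectral structure at the origin established in Proposition \ref{prop:spec1}. First I would fix $c>0$ small enough that Proposition \ref{prop:spec1}(i) applies, so that for every $\zeta\in M_c\setminus\{0\}$ the limiting matrices $G_\pm(\zeta)$ are hyperbolic (no purely imaginary spectrum) with stable/unstable dimensions $N$ and $M=2n-N$. On the interior of $M_c$ one then has a uniform spectral gap, and the stable manifold of $G(\bar u(x),\zeta)$ at $x=+\infty$ (resp.\ the unstable manifold at $x=-\infty$) is an $N$-dimensional (resp.\ $M$-dimensional) subspace of solutions of \eqref{lin2} with $\hat F=0$; because the coefficient matrix $G(x,\zeta)$ converges to $G_\pm(\zeta)$ at exponential rate $\delta$ by \eqref{asym}, these manifolds are well-defined and depend analytically on $\zeta$ in the interior by the usual exponential-dichotomy/projection argument. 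Choosing bases $\{V_j^+\}_{j=1}^N$ and $\{V_j^-\}_{j=1}^M$ and forming the Wronskian $\mathcal D(\zeta)=\det\big(V_1^+,\dots,V_N^+,V_1^-,\dots,V_M^-\big)\big|_{x=0}$ gives an analytic function on the interior of $M_c$, independent of the choice of bases up to a nonvanishing analytic scalar factor.

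The main point, and the reason the Gap Lemma is invoked, is the extension of $\mathcal D$ continuously (indeed $C^1$ in $\rho$) up to the boundary of $M_c$ and, crucially, through the degenerate point $\zeta=0$. Near $\zeta=0$ the matrix $G_\pm(\zeta)$ loses hyperbolicity: by Proposition \ref{prop:spec1}(ii) it has a semisimple eigenvalue $0$ of multiplicity $n$, with the remaining ``fast'' eigenvalues those of $-A^1_\pm(B^{11}_\pm)^{-1}$, which are bounded away from the imaginary axis by (S3). The standard device is to pass to polar coordinates $\zeta=\rho\hat\zeta$ and track the $n$ ``slow'' modes that bifurcate from $0$: by Proposition \ref{prop:spec1}(iii) (and the small-$\rho$ expansion in Remark \ref{rem:d}) these split at rate $\rho$ into groups with $\Re\mu_1\neq 0$, so after rescaling one recovers a spectral gap uniform in $\hat\zeta$ on $S^d_c$. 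The Gap Lemma \cite{GZ98,MZ05,Z01} then provides solutions of \eqref{lin2} that track the (rescaled) stable/unstable subspaces and extend analytically in $\rho$ across $\rho=0$ — the exponential convergence \eqref{asym} of the coefficients being exactly the hypothesis needed to absorb the loss of gap as $\rho\to0$. Assembling these with the fast modes (which are uniformly hyperbolic and pose no difficulty) yields bases $V_j^\pm(x,\rho,\hat\zeta)$ for which $D(\rho,\hat\zeta)=\mathcal D(\rho\hat\zeta)$ and $\partial_\rho D$ extend continuously to $\overline{S^d_c}$, giving $D\in C(S^d_c,C^1([0,\infty)))$; continuity of $\mathcal D$ up to the remaining (non-origin) boundary of $M_c$ follows from continuity of the dichotomy projections there, using again Proposition \ref{prop:spec1}(i).

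I expect the main obstacle to be purely bookkeeping rather than conceptual: one must check that the hypotheses of the Gap Lemma and of the consistent-splitting/conjugation arguments of \cite{Z01,Z04,Kw11} — namely the exponential decay \eqref{asym}, the block structure separating slow modes (size $\sim\rho$) from fast modes (size $\sim1$), and analyticity of the relevant spectral projections — are all genuinely available here, with $G$ now the $2n\times 2n$ companion matrix of the second-order operator rather than the first-order block form of the parabolic or relaxation cases. Since Proposition \ref{prop:spec1} has been set up precisely to supply these inputs, and the profile assumptions (S1)--(S3) guarantee the fast modes stay off the imaginary axis, the verification is routine and the construction then proceeds verbatim as in the cited references; accordingly I would state the proof by reduction to those arguments, indicating the (minor) modifications.
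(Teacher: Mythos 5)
Your proposal follows essentially the same route as the paper: the paper gives no detailed proof but simply observes that, given Proposition \ref{prop:spec1} (consistent splitting, the semisimple zero eigenvalue with fast modes governed by $-A^1_\pm(B^{11}_\pm)^{-1}$, and the slow-mode expansion in $\rho$) together with the Gap Lemma \cite{GZ98,MZ05,Z01}, the construction goes through exactly as in the parabolic, hyperbolic--parabolic, and relaxation cases \cite{Z01,Z04,KwZ09,Kw11}. Your sketch correctly identifies these same inputs and the same reduction, so it is consistent with the paper's argument.
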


Now, the spectral stability of the shock profile corresponds to the following assumption.
\begin{itemize}
	\item[(S7)] $D$ has no zeros on $S^d_c$ and $\partial_\rho D(0,\hat \zeta) \neq 0.$
\end{itemize}

\begin{remark}
	\label{rem:phi}
	Differentiating \eqref{tw} shows that $\bar v=\bar{u}'$ is a solution to \eqref{flt} for $\hat f=0$ and $\rho=0$ with $\bar v(\pm \infty)=0$. In particular, for $\rho=0$, $(\bar v,0)$ solves \eqref{lin2} with $\hat F=0$, and vanishes at $\pm \infty$, i.e., for $\zeta=0$,
	$$(\bar v,0)^t \in \ker \big(\operatorname{span}\{V_1^+,\ldots,V_N^+\}\big) \cap \ker \big(\operatorname{span}\{U_1^-,\ldots,U_M^-\}\big).$$
	This shows that $D(\rho,\hat \zeta)$ always vanishes for $\rho=0$.
	
%	For later purposes we normalize $D$ such that
%	\begin{equation}
%		\label{phi}
%		V_1^+(x,0,\hat \zeta)=V_M^-(x,0,\hat \zeta)=(\bar v(x),0)^t
%	\end{equation}
\end{remark}

\section{Linearized stability}\label{s:3}
We follow the semi-group based approach in \cite{Z07}.

In variables $U=(u, \mathcal Au_t)$, we can write \eqref{line1} as (suppressing the argument $\bar{u}$)
\begin{equation}
	\label{fo}
	\begin{aligned}
		U^1_t&=\mathcal A^{-1}
		U^2\\
		U^2_t&=\sum_{j,k=1}^dB^{jk}
		U^1_{x_jx_k}-\sum_{j=1}^d\tilde A^j U^1_{x_j}+\sum_{j=1}^dC^j\mathcal A^{-1}U^2_{x_j}-\tilde A^0\mathcal A^{-1}U^2\\
		&\quad + \sum_{j=1}^dB^{j1}_{x_1}U^1_{x_j}-\tilde A^1_{x_1}U^1+(C^1\mathcal A^{-1})_{x_1}+f.
	\end{aligned}
\end{equation}
In form of general evolution equation this is
\begin{equation} 
	\label{semi1}
	U_t-LU=F,
\end{equation}
where 
\begin{align}
	\label{l}
	\mathcal L&=\begin{pmatrix}
		0 &  \mathcal A^{-1} \\
		L^{21} & L^{22}
	\end{pmatrix},\\
	\nonumber
	L^{21}u&=\sum_{j,k=1}^d(B^{jk}u_{x_j})_{x_k}-\sum_{j=1}^d( \tilde A^ju)_{x_j},\quad L^{22}u= \sum_{j=1}^d (C^j\mathcal A^{-1}u)_{x_j}-\tilde A^0\mathcal A^{-1}u.
\end{align}

For $s \in \N_0$, the $H^s$-realisation of $L_s$ of $L$ is the (unbounded) operator on $H^{s+1} \times H^s$ defined by
$D(L_s)=H^{s+2} \times H^{s+1}, \quad L_sF =LF,~~F \in D(L_s)$. 

We also introduce the equivalent operator $\tilde L_s$ on $H^s \times H^s$ via
\begin{equation} 
	\label{tl}
	D(\tilde L_s)=H^{s+1} \times H^{s+1}, ~~\tilde L_s=
	\begin{pmatrix}
		\Lambda^{-1}& 0\\
		0 & I_{L^2}
	\end{pmatrix}L
	\begin{pmatrix}
		\Lambda & 0\\
		0 & I_{L^2}
	\end{pmatrix}=:T^{-1}LT.
\end{equation}
Since $T$ is an isometric linear operator from $H^{s} \times H^s$ to $H^{s+1} \times H^s$, we find for $F=(f_1,f_2) \in H^{s+2} \times H^{s+1}$,
$$\|LF\|_{s+1,s}=\|\tilde L_s T^{-1}F\|_{s,s}.$$
Now, let
$$\tilde{\mathbb{L}}(\bar u,\xi)=\begin{pmatrix}
	0 & \lxi \mathcal A(\bar u)^{-1} \\
	B(\bar u,\xi)\lxi^{-1} & iC(\bar u,\xi)\mathcal A(\bar u)^{-1}
\end{pmatrix}$$
be the principal symbol of $\tilde{L}$. By (H$_B$), there exists a symbolic symmetrizer for 
$$i\mathcal B(\bar u,\omega)=i\begin{pmatrix} 0 & I\\ \mathcal A^{-1}(\bar u)B(\bar u,\omega) & i\mathcal A(\bar u)^{-1}(\bar u)C(\bar u,\omega)
\end{pmatrix}=i\begin{pmatrix}I & 0\\ 0& \mathcal  A(\bar u)^{-1}\end{pmatrix}\tilde {\mathbb L}(\bar u,\xi)\begin{pmatrix}I & 0\\ 0& \mathcal  A(\bar u)\end{pmatrix},$$
and thus also for $\tilde{\mathbb{L}}(\bar u,\xi)$. Hence, there exists a functional symmetrizer for $\tilde L$, and we obtain the following proposition \cite{BS06}.

\begin{prop}
	\label{semi2}
	For all $s \in \N_0$, $T \in [0,\infty)$ and $U_0 \in H^{s+1}$, there exists a unique solution $U \in C([0,T],H^{s+1}) \cap C^{1}([0,\infty],H^{s})$ of the Cauchy problem
	$$U_t-\tilde L_sU=F,\quad U(0)=U_0.$$
	Additionally,
	$$\|U\|_s^2 \le C(T)\|U_0\|_s^2 .$$
\end{prop}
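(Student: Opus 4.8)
The plan is to obtain Proposition~\ref{semi2} as a standard consequence of the existence of a functional (pseudo-differential) symmetrizer for the principal symbol $\tilde{\mathbb L}(\bar u,\xi)$, following the Friedrichs-type energy method for symmetrizable first-order pseudo-differential evolution equations as in \cite{BS06}. First I would observe that since $\bar u$ and all its derivatives are bounded (by \eqref{asym}), the coefficients of $\tilde L$ are in $C^\infty_b(\R^d)$, so the operator $\tilde L_s$ has smooth bounded coefficients and is a first-order (in fact, order-one $\Psi$DO after the $\Lambda$-conjugation that renders both diagonal blocks order one) operator on $H^s\times H^s$. By the computation preceding the statement, the symmetrizer $\mathcal S(\bar u,\omega)$ for $i\mathcal B(\bar u,\omega)$ furnished by (H$_B$) conjugates to a symmetrizer for $\tilde{\mathbb L}(\bar u,\xi)$, namely
$$
S_0(\bar u,\xi)=\begin{pmatrix}I & 0\\ 0& \mathcal A(\bar u)^{-1}\end{pmatrix}\mathcal S(\bar u,\xi/|\xi|)\begin{pmatrix}I & 0\\ 0& \mathcal A(\bar u)^{-1}\end{pmatrix},
$$
which is hermitian, uniformly positive definite, and bounded with all derivatives, and such that $\Rep\big(S_0(\bar u,\xi)\,i\tilde{\mathbb L}(\bar u,\xi)\big)$ is a symbol of order zero.

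Next I would quantize: set $\mathcal S=\Op(S_0)+c\Lambda^{-1}$ for a suitably large constant $c$ (or invoke the Friedrichs-type symmetrizer construction of \cite{BS06} directly) so that $\mathcal S$ is a bounded, self-adjoint, uniformly positive-definite operator on $H^s\times H^s$ for each $s$, with $\mathcal S\colon H^s\to H^s$ an isomorphism, and define the equivalent norm $\interleave U\interleave_s^2:=\langle\Lambda^s\mathcal S\Lambda^s U,U\rangle$. The key step is the energy estimate: differentiating $\interleave U(t)\interleave_s^2$ along a solution of $U_t-\tilde L_s U=F$ gives
$$
\frac{d}{dt}\interleave U\interleave_s^2 = 2\Rep\langle\Lambda^s\mathcal S\Lambda^s(\tilde L_s U+F),U\rangle
= 2\Rep\langle\Lambda^s\mathcal S\Lambda^s\tilde L_s U,U\rangle + 2\Rep\langle\Lambda^s\mathcal S\Lambda^s F,U\rangle.
$$
The first term is controlled by $C\interleave U\interleave_s^2$: the principal part of $\mathcal S\tilde L_s$ has self-adjoint leading symbol (that is the defining property of the symmetrizer), so by the sharp Gårding inequality / symbolic calculus $2\Rep\langle\Lambda^s\mathcal S\Lambda^s\tilde L_s U,U\rangle \le C\|U\|_s^2\le C'\interleave U\interleave_s^2$; the commutators $[\Lambda^s,\tilde L_s]$, $[\Lambda^s,\mathcal S]$ and the lower-order terms of $\tilde L_s$ are all of order $\le s$ on $H^{s+1}$ and contribute only bounded-on-$H^s$ errors. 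The forcing term is bounded by $C\,\interleave U\interleave_s\,\|F\|_s\le C(\interleave U\interleave_s^2+\|F\|_s^2)$. Grönwall's inequality then yields $\interleave U(t)\interleave_s^2\le e^{Ct}\big(\interleave U_0\interleave_s^2+\int_0^t\|F\|_s^2\big)$, hence $\|U(t)\|_s^2\le C(T)\|U_0\|_s^2$ in the case $F=0$ (and the stated estimate in general). Existence and uniqueness of $U\in C([0,T],H^{s+1})\cap C^1([0,T],H^s)$ on the domain $H^{s+1}\times H^{s+1}$ then follow from this a~priori estimate by the standard semigroup/Galerkin argument for hyperbolic symmetrizable systems — e.g., the Hille--Yosida theorem applied after a shift $\tilde L_s - C\,\mathrm{Id}$, using the dissipativity just established, together with the corresponding estimate for the adjoint, which has the same structure; alternatively one cites \cite{BS06} directly.

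The main obstacle, and the only nontrivial point, is the commutator bookkeeping: one must verify that after conjugating by $\Lambda$ (which makes the off-diagonal blocks of $\tilde{\mathbb L}$ of matching order one but at the cost of $x$-dependence in $\Lambda^{-1}\mathcal A(\bar u)^{-1}$-type terms) all error terms generated by $[\Lambda^s,\cdot]$, by the $x$-dependence of $S_0(\bar u,\xi)$, and by the subprincipal part of $\tilde L_s$ genuinely land in operators bounded on $H^s$ when applied to the solution $U\in H^{s+1}$, so that the energy inequality closes. This is routine given the $C^\infty_b$ bounds on $\bar u$ and its derivatives from \eqref{asym}, but it is where the symmetrizability hypothesis (H$_B$) is genuinely used and is the step I would write out most carefully — or, as the statement indicates, simply defer to the general theorem of Bianchini--Spinolo \cite{BS06}, whose hypotheses are met by exactly the structure exhibited above.
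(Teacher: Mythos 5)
Your proposal is correct and follows essentially the same route as the paper: the paper simply observes that (H$_B$) gives a symbolic symmetrizer for $i\mathcal B(\bar u,\omega)$, hence by the displayed conjugation one for $\tilde{\mathbb L}(\bar u,\xi)$, and therefore a functional symmetrizer for $\tilde L$, after which it cites \cite{BS06} for the well-posedness and energy bound. You merely write out the standard symmetrizer--G\aa{}rding--Gr\"onwall--semigroup details that the paper delegates to \cite{BS06}, so there is no substantive difference in approach.
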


It is straightforward to show, note in particular $\det B(\bar u,\omega) \neq0$, that $\tilde L_s$ is a closed operator, and trivially $\tilde L_s$ is densely defined. Thus, from Proposition \ref{semi2} we derive the following corollary (cf. e.g. \cite[Theorem 4.1.3]{PA83}).
\begin{coro}
	\label{coro:semi}
	For all $s \in \N_0$, $\tilde L_s$ generates a $C^0$-semigroup $e^{\tilde  L_st}$ on $H^s$ with
	\begin{equation} 
		\label{sg}
		\|e^{\tilde L_st}\|_s \le C_se^{\gamma_st}.
	\end{equation}
\end{coro}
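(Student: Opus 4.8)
The plan is to derive Corollary \ref{coro:semi} from Proposition \ref{semi2} as a routine application of the generation theorem for $C^0$-semigroups. The starting point is that Proposition \ref{semi2} furnishes, for each $U_0 \in H^{s+1}$, a unique solution $U \in C([0,T],H^{s+1})\cap C^1([0,\infty],H^s)$ of $U_t = \tilde L_s U$ (take $F=0$), with the \emph{a priori} bound $\|U(t)\|_s^2 \le C(T)\|U_0\|_s^2$. First I would observe that setting $\tilde S(t)U_0 := U(t)$ defines a family of linear operators on the dense subspace $H^{s+1}\subset H^s$ that is uniformly bounded in $H^s$-operator norm on each compact time interval, and that the semigroup property $\tilde S(t+\tau)=\tilde S(t)\tilde S(\tau)$ holds on $H^{s+1}$ by uniqueness of solutions. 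The $H^s$-bound then lets one extend each $\tilde S(t)$ to a bounded operator on all of $H^s$ by density, preserving the semigroup property and the bound; strong continuity at $t=0$ on $H^{s+1}$ together with the uniform bound upgrades to strong continuity on $H^s$, again by a density argument. The uniform-on-compacts bound $\|\tilde S(t)\|_s \le C(T)$ for $t\in[0,T]$ is exactly what is needed to produce constants $C_s,\gamma_s$ with $\|\tilde S(t)\|_s \le C_s e^{\gamma_s t}$ for all $t\ge 0$ (submultiplicativity along a partition of $[0,t]$ into unit intervals gives exponential growth).

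Next I would identify the generator of this $C^0$-semigroup with $\tilde L_s$. The excerpt already notes the two facts one needs here: $\tilde L_s$ is densely defined (trivially, since $D(\tilde L_s)=H^{s+1}\times H^{s+1}$ is dense in $H^s\times H^s$) and $\tilde L_s$ is closed (which follows from $\det B(\bar u,\omega)\ne 0$, i.e.\ the principal part of the operator is elliptic enough in the relevant sense to make the graph norm control the $H^{s+1}$ norm). With these in hand, the cited reference \cite[Theorem 4.1.3]{PA83} — which characterizes generators via well-posedness of the Cauchy problem on a dense set with a uniform growth bound — applies directly: the infinitesimal generator of $\tilde S(t)$ is a closed extension of $\tilde L_s$, and since solutions emanating from $D(\tilde L_s)=H^{s+1}\times H^{s+1}$ are unique and stay in that space for positive time by Proposition \ref{semi2}, the generator coincides with $\tilde L_s$ on its domain, hence equals $\tilde L_s$. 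This gives $\tilde S(t)=e^{\tilde L_s t}$ and establishes \eqref{sg}.

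The main obstacle — really the only non-bookkeeping point — is verifying that $\tilde L_s$ is a closed operator on $H^s\times H^s$, so that the abstract generation theorem is applicable rather than merely giving a semigroup without a clean identification of its generator. This is where $\det B(\bar u,\omega)\ne 0$ enters: because the highest-order block of $\tilde{\mathbb L}(\bar u,\xi)$ is (after the conjugation by $T$) comparable to $\lxi$ times an invertible matrix uniformly in $\omega$, one gets an elliptic estimate $\|F\|_{s+1,s+1} \lesssim \|\tilde L_s F\|_{s,s} + \|F\|_{s,s}$ on $D(\tilde L_s)$ up to lower-order terms coming from $\bar u'$ and the commutators with $\Lambda$, and such an estimate is equivalent to closedness of $\tilde L_s$. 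I would only sketch this, citing local well-posedness (Proposition \ref{prop:locwell}) and the symbolic symmetrizer structure already invoked just above the statement of Proposition \ref{semi2} as the source of the required mapping properties, and otherwise treat the passage from Proposition \ref{semi2} to Corollary \ref{coro:semi} as the standard fact recorded in \cite[Theorem 4.1.3]{PA83}.
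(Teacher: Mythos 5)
Your proposal follows essentially the same route as the paper: the paper likewise derives the corollary from Proposition \ref{semi2} by noting that $\tilde L_s$ is densely defined and closed (using $\det B(\bar u,\omega)\neq 0$) and then invoking the abstract generation theorem \cite[Theorem 4.1.3]{PA83}, exactly as you do, with your construction of the solution semigroup and identification of its generator being the standard content of that cited theorem. So the proposal is correct and matches the paper's argument, just spelled out in more detail.
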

By definition, $L_s$ generates the $C_0$-semigroup $e^{L_st}=Te^{\tilde L_st}T^{-1}$ on $H^{s+1} \times H^s$ satisfying
$$\|e^{L_st}\|_{s+1,s} \le C_se^{\gamma_st}$$
with the same constants as in \eqref{sg}. 

In the following, we mostly consider $s=0$ and write $e^{Lt}$ instead of $e^{L_0t}$. 

We take the Laplace-transform with respect to $t$ and the Fourier-transform with respect to $y=(x_2,\ldots,x_d)$ in \eqref{semi1}, and consider the resulting family of ordinary differential equations
\begin{equation} 
	\label{flt2}
	\lambda \hat U-L_\eta \hat U =\hat F, ~~ (\lambda,\eta) \in \C \times \R^{d-1},
\end{equation} 
where $\hat U, \hat F$ denote the Fourier-transformed versions of $U,F$ with respect to $y$ and
\begin{align*}
	L_\eta&=\begin{pmatrix}
		0 & \mathcal A^{-1} \\
		L^{21}_\eta & L^{22}_\eta
	\end{pmatrix},\\
	L^{21}_\eta u&=(B^{11}u_x)_x+((iB_{12}(\eta)-A^1)u)_x-(B_{22}(\eta)- i\tilde A_2(\eta)u,\\
	L^{22}_\eta u&=(C^1\mathcal A^{-1}u)_x+(iC_{2}(\eta)-\tilde A^0)\mathcal A^{-1}u.
\end{align*}

By Corollary \ref{coro:semi}, we have the identity (cf. \cite{PA83})
\begin{equation} 
	\label{sg1}
	(e^{Lt}F)(x)=\text{~P.V.~}\int_{\gamma-i\infty}^{\gamma+i\infty} \int_{\R^{d-1}}e^{i\eta y+\lambda t}(\lambda-L_\eta)^{-1}\hat F(x_1,\eta)d\eta d \lambda,
\end{equation}
for $F=(f_1,f_2) \in H^2 \times H^1 $, $0 \le t \le T$,  $\gamma>\gamma_0$ (the growth-rate of $e^{Lt}$), with convergence of the integrals in $L^2([0,T] \times \R^d)$. 

At the heart of the argument for linearized stability lie the following resolvent estimates. 
\begin{prop}
	\label{prop:resolvent}
	\begin{itemize}
		\item[(i)]
		There exists $r>0$  such that for all $(\lambda,\eta) \in M_c$ with $|(\lambda,\eta)|\le r$ and for all $p \in [2,\infty]$,
		\begin{align} 
			\label{sfre}
			|(\lambda-L_\eta)^{-1}\hat F|_{L^p} &\le C\beta |(\lambda,\eta)|^{-1}(|\hat F^1|_{W^{1,1}}+|\hat F^2|_{L^1}),\\
			\label{sfre2}
			|(\lambda-L_\eta)^{-1}\partial_{x_1} \hat F|_{L^p} &\le C\beta (|\hat F^1|_{W^{1,2}}+|\hat F^2|_{L^1})
		\end{align}
		with 
		$$\beta=\max_{\substack{l=1,\ldots,\bar l\\r=0,\ldots, \bar r}}(|\hat \tau-a_{lr}(\hat \eta)|+\hat \gamma +\rho)^{1/\bar s_{r}-1},$$
		where $s_0=a_{l0}=1$, and otherwise $a_{lj}$, $s_j$ are defined below condition (S5).
		\item[(ii)]
		For all $r_1,r_2>0$, $s \in \N$ and
		$(\lambda,\eta) \in M_c$ with $r_1 \le |(\lambda,\eta)| \le  r_2,$ 
		there holds
		$$|(\lambda-L_\eta)^{-1}\hat F|_{H^s \times H^s} \le C(r_1,r_2,s)|\hat F|_{H^{s}\times H^{s-1}}.$$
		\item[(iii)]
		For $R>0$, define the cut-off
		$$\chi_R(\lambda,\eta)=\begin{cases}
			1, & |\eta|^2 \ge 2(R-|\lambda|^2)\\
			0, & |\eta|^2 \le R -|\lambda|^2
		\end{cases},$$
		and set $\chi_R(\lambda,D)=\mF^{-1}_y\chi_R\mF_y$. Then there exists $\delta_1>0$ such that, if $\|\bar u-\bar u_-\|_{W^{2,\infty}} <\delta_1$, for all $s \in \N$ and some $R=R(s),c=c(s)>0$, we find
		\begin{equation} 
			\label{lfre}
		\|(\lambda-L)^{-1}\chi_R(\lambda,D)F\|_{s+1,s} \le C(s)\|F\|_{s+1,s},
	\end{equation}
 	for all $\lambda \in \C$ with $\Rep \lambda \ge-c$. 
	\end{itemize} 
\end{prop}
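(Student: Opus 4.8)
The three parts concern the small, bounded, and large frequency regimes, and the plan is to treat each by a different mechanism.

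\emph{Small frequencies (i).} I would represent $(\lambda-L_\eta)^{-1}\hat F$ through the resolvent kernel of the first-order ODE system \eqref{lin2}, assembled from the decaying solution bases $\{V_j^\pm(x,\zeta)\}$ of Proposition \ref{prop:ev}. Conjugating \eqref{lin2} to its constant-coefficient limits $G_\pm(\zeta)$ using the exponential convergence \eqref{asym} and the Gap Lemma, one obtains a kernel written as a sum of forward/backward contributions with the Evans function $\mD(\zeta)$ in the denominator, to be estimated uniformly for $|\zeta|\le r$ in three pieces: (a) exponential decay in $|x-y|$ from the spectral gaps of $G_\pm$ away from glancing, refined near $\rho=0$ by the expansions of Proposition \ref{prop:spec1}(iii); (b) the singularity of $\mD(\zeta)^{-1}$ at $\zeta=0$, which is exactly first order — giving the factor $|(\lambda,\eta)|^{-1}$ and no worse — because (S7) provides the simple zero $\partial_\rho D(0,\hat\zeta)\neq 0$, consistent with the null solution $\bar v=\bar u'$ of Remark \ref{rem:phi}; (c) the glancing singularity, where, by (S2) and (S5), the stable/unstable eigenprojections of $G_\pm$ lose analyticity across the surfaces $\tau=a_{lr}(\eta)$ and contribute exactly the factor $\beta$. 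Converting these kernel bounds to the 1-D $L^p$ estimates \eqref{sfre}--\eqref{sfre2} by Hausdorff--Young and Young's inequality, noting that only the first block of \eqref{lin2} carries an $x_1$-derivative (whence the $W^{1,1}$, $W^{1,2}$ norms on $\hat F^1$), completes this part. It runs entirely parallel to the parabolic, hyperbolic-parabolic and relaxation treatments of \cite{Z01,Z04,GMWZ05,K11}; the only thing to verify is that the hyperbolic-hyperbolic symbol fits that template, which is precisely the content of Propositions \ref{prop:spec1} and \ref{prop:ev} under (S1)--(S7) — here (S3) supplies the fast-mode splitting $A^1_\pm(B^{11}_\pm)^{-1}$ needed for the Gap Lemma.

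\emph{Bounded frequencies (ii).} On the compact set $\{r_1\le|(\lambda,\eta)|\le r_2\}\cap M_c$, consistent splitting (Proposition \ref{prop:spec1}(i)) together with the Evans condition (S7) excludes spectrum, so $\lambda-L_\eta$ is boundedly invertible on $L^2$ with inverse continuous in $(\lambda,\eta)$; compactness then gives a uniform bound. The single-derivative loss $H^s\times H^{s-1}\to H^s\times H^s$ is a routine consequence of the second-order structure of \eqref{flt} — two $x_1$-derivatives are recovered from $\hat f$, and reconstructing $\hat U^2=\mathcal A(\lambda\hat U^1-\hat F^1)$ costs nothing further — so elliptic ODE estimates, or once more the conjugated Green's kernel of (i), yield all $s\in\N$.

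\emph{Large frequencies (iii), the main obstacle.} Here the Kawashima-type and WKB arguments of \cite{Z04,Kw11,GMWZ05} are unavailable, the relativistic models lacking the requisite symmetric structure; instead, following the strategy of \cite{Z24}, I would run a paradifferential energy estimate of the kind used in \cite{S24} for constant states, now localized along the profile. After the $T$-conjugation of \eqref{tl} placing both components in $H^s$, I would perform a symmetrized $L^2$ energy estimate on $\chi_R(\lambda,D)\hat U$, with the symmetrizer built from the symbolic symmetrizer $\mathcal S$ and the sub-principal dissipation term $\mathcal W_1$ of condition (D2) (a paradifferential quantization thereof along the profile), for $i\mathcal B(\bar u,\omega)$. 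Condition (D), valid at $u_\pm$ by (S4) and hence — since $\|\bar u-\bar u_-\|_{W^{2,\infty}}<\delta_1$ is small and (D) is open near the endstates — at every state $\bar u(x_1)$ on the profile, supplies the strict dissipative gain $\Rep\lambda\lesssim-\kappa(|\xi|)$ at principal plus sub-principal order; the paradifferential calculus of Appendix \ref{s:paradiff} then reduces the remainders — commutators, the lower-order $\bar u'$-dependent pieces of $\tilde A^j$, and the error from freezing coefficients — to terms of lower order in $\lxi$ that are absorbed once $|\xi|\gtrsim R(s)$, i.e. on $\supp\chi_R$. For $\Rep\lambda\ge-c(s)$ this beats the residual $-c(s)$ and yields \eqref{lfre} with no loss of regularity. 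I expect this to be the hardest step: one must build the symmetrizer so that its paradifferential quantization is genuinely positive and its $\mathcal W_1$-correction captures the $O(|\xi|)$ dissipation uniformly in $\omega\in S^{d-1}$ — this is where (S6) removes a technical nuisance, and where the constant-state argument of \cite{S24} has to be adapted to the $x_1$-dependent profile — and one must check that the error terms are truly lower order in $\lxi$ so that the absorption, and hence the choice of $R$ and $\delta_1$, closes; no uncontrolled boundary term arises, \eqref{lfre} being a full-line estimate without spatial cutoff, so all integrations by parts are clean.
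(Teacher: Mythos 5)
Your parts (ii) and (iii) are essentially the paper's arguments. For bounded frequencies the paper also reduces via Remark \ref{rem:hatU} to \eqref{flt} and bootstraps regularity by differentiating the ODE (it gets the uniform $L^2$ bound from explicit $P_\pm$ block estimates plus Lemma \ref{lem:ef}(ii) rather than your abstract compactness argument, but both are standard). For large frequencies the paper does exactly what you propose: \eqref{lfre} is deduced from a paradifferential damping estimate (Lemma \ref{lem:res1}, a linear version of Proposition \ref{prop:central1} built on the symmetrizer $K(w)$ of Lemma \ref{lem:kv} from \cite{S24}), valid since (S4) and $\|\bar u-u_-\|_{W^{2,\infty}}<\delta_1$ keep condition (D) in force along the profile, with lower-order terms absorbed on $\supp\chi_R$ for $R(s)$ large. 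Two corrections there: (S6) is not used in the high-frequency regime at all (it enters only the low-frequency glancing analysis), and the dissipation encoded by $\mW_1$ at high frequency is of order zero (the rate $\kappa(|\xi|)$ saturates), not $O(|\xi|)$.

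In part (i) you take a genuinely different route -- explicit resolvent-kernel/Evans-function bounds in the style of \cite{Z01,Z04,Kw11} -- whereas the paper follows \cite{GMWZ05,N09}: the doubled boundary-value problem \eqref{bvp}, MZ-conjugation (Lemma \ref{lem:con}), block diagonalization into elliptic, hyperbolic and glancing modes (Lemma \ref{lem:decomp}, Proposition \ref{prop:dec}), $L^1$--$L^\infty$ block energy estimates (Lemma \ref{lem:estW}), and the degenerate Kreiss bound from (S7) (Lemma \ref{lem:ef}), assembled in Proposition \ref{prop:dec2} and Corollary \ref{coro:aux}. Your route is viable in principle, but as written it has a genuine gap: you assert that the hyperbolic-hyperbolic symbol ``fits the template'' of the parabolic/relaxation analyses and that this is ``precisely the content of Propositions \ref{prop:spec1} and \ref{prop:ev}''. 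Those results give only consistent splitting and the analytically extended Evans function; they do \emph{not} supply the decisive structural fact on which either approach rests at low frequency, namely that the second-order correction $H_{1\pm}$ to the slow spatial modes is dissipative on the relevant eigenspaces. In the paper this is Lemma \ref{lem:h0}, deduced from condition (D1) at $u_\pm$ (i.e.\ from (S4)) via the identity \eqref{h1}, and it is exactly what produces the uniform kernel decay away from glancing and, combined with (S5) and (S6) in Proposition \ref{prop:dec}, the precise $\beta$, $\alpha$ singularity structure at glancing frequencies that appears in \eqref{sfre}. Without proving this sign condition -- the genuinely new content of the hyperbolic-hyperbolic case -- neither the kernel decay rates nor ``exactly the factor $\beta$'' can be justified. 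A second, smaller omission: the gain of one power of $\rho$ for divergence-form sources, needed both for \eqref{sfre2} and to get \eqref{sfre} with only $|\hat F^1|_{W^{1,1}}$ even though $L^{21}_\eta\hat F^1$ carries two derivatives, is obtained in the paper from the Kreiss auxiliary problem \eqref{aux} (Corollary \ref{coro:aux}(ii)); in your framework it requires derivative bounds on the resolvent kernel that you only gesture at.
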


Having shown Proposition \ref{prop:resolvent}, we can split the semigroup $e^{Lt}$ into a low an high frequency part $e^{Lt}=:S(t)=S_1(t)+S_2(t)$ with
\begin{align*}
	(S_1(t)F)(x,y)&=\int_{|\eta|^2 \le \theta_1+\theta_2}\oint_{\Rep \lambda=\theta_2-|\eta|^2-|\operatorname{Im}\lambda|^2 \ge-\theta_1} e^{i\eta \cdot y+\lambda t}(\lambda-L_\eta)^{-1}\hat F(x,\eta)d\lambda d\eta\\
	(S_2(t)F)(x,y)&=\mathrm
	{P.V.}\int_{-\theta_1-i\infty}^{-\theta_1+i\infty}\int_{\R^{d-1}} \mathbb I_{\{|\eta|^2+|\operatorname{Im}\lambda|^2 \ge \theta_1+\theta_2\}} e^{i \eta \cdot y+\lambda t}(\lambda-L_\eta)^{-1}\hat F(x,\eta)d\eta d\lambda,
\end{align*}
for arbitrary $\theta_2 >0$ and $\theta_1$ sufficiently small in relation to $\theta_2$. $\mathbb I_M$ being the indicator function of the set $M$. Then from Proposition \ref{prop:resolvent}, we get with the same arguments as in \cite{Z04, Z07} the following result, where we use $s_0:=[d/2]+1$.
(Note that (iii) is implied by (ii) via Sobolev-embedding.) 
\begin{prop}
	\label{prop:dl1}
	\begin{enumerate} 
		For $p \in [2, \infty]$, $t\in [0,\infty)$, $\tau \in \N_0^d$, $|\tau| \le 1$
		$$
		\|S_1(t)\partial_{x}^\tau F\|_{L^p} \le C(1+t)^{-\frac{d-1}{2}(1-\frac{1}{p})-\frac{|\tau|}{2}}(\|F^1\|_{W^{1+|\tau|,1}}+\|F^2\|_{L^1}).
		$$
		\item[(ii)]
		For $s \in \N$, $t\in [0,\infty)$
		$$\|S_2(t)F\|_{s+1,s} \le Ce^{-ct}(\|F\|_{s+1,s}+\|LF\|_{s+1,s}).$$
		\item[(iii)]
		For $t \in [0,\infty)$, $p \in [2,\infty]$, 
		$$
		\|S_2(t)F\|_{L^p} \le Ce^{-ct}\|F\|_{s_0+2,s_0+1}.
		$$
	\end{enumerate}
\end{prop}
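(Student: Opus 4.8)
The plan is to derive the three bounds from the resolvent estimates of Proposition \ref{prop:resolvent} via the inverse Laplace/Fourier representation \eqref{sg1}, following the template of \cite{Z04,Z07}. First I would treat the high-frequency part $S_2(t)$, which is the easier piece. Writing $S_2(t)F$ as the oscillatory integral over the contour $\Rep\lambda=-\theta_1$ restricted to $|\eta|^2+|\Im\lambda|^2\ge\theta_1+\theta_2$, I would use Proposition \ref{prop:resolvent}(iii) (the cut-off high-frequency resolvent bound \eqref{lfre}) to control $(\lambda-L)^{-1}\chi_R(\lambda,D)$, together with a resolvent identity $(\lambda-L)^{-1}=\lambda^{-1}+\lambda^{-1}(\lambda-L)^{-1}L$ to gain the extra decay in $\lambda$ needed to make the contour integral absolutely convergent; this produces the factor $\|F\|_{s+1,s}+\|LF\|_{s+1,s}$ and the exponential $e^{-ct}$ coming from $e^{\lambda t}$ on the shifted contour. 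Part (iii) then follows from (ii) by Sobolev embedding $H^{s_0+1}\hookrightarrow L^\infty$ with $s_0=[d/2]+1$, exactly as indicated in the statement. Here one must also check that the intermediate-frequency annulus $r_1\le|(\lambda,\eta)|\le r_2$, covered by Proposition \ref{prop:resolvent}(ii), glues correctly to both the small- and large-frequency regimes so that the contour decomposition is consistent.

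For the low-frequency part $S_1(t)$, the heart of the matter is the small-frequency resolvent bound \eqref{sfre}--\eqref{sfre2}: one substitutes these into the contour integral over $\{|\eta|^2\le\theta_1+\theta_2\}$ with $\Rep\lambda=\theta_2-|\eta|^2-|\Im\lambda|^2\ge-\theta_1$, and then estimates the resulting $L^p$ norm in $(x,y)$. After taking $L^p$ in $x_1$ using the right-hand side of \eqref{sfre} (which gives $|(\lambda,\eta)|^{-1}(\|F^1\|_{W^{1,1}}+\|F^2\|_{L^1})$ uniformly in $x_1$), the remaining task is to bound
\begin{equation*}
\Big\|\int_{|\eta|^2\le\theta_1+\theta_2}\oint e^{i\eta\cdot y+\lambda t}\,\beta\,|(\lambda,\eta)|^{-1}\,d\lambda\,d\eta\Big\|_{L^p_y}
\end{equation*}
by $C(1+t)^{-\frac{d-1}{2}(1-1/p)}$, and its $\partial_x$-variant by $C(1+t)^{-\frac{d-1}{2}(1-1/p)-1/2}$ using \eqref{sfre2} (which trades the $|(\lambda,\eta)|^{-1}$ for an extra derivative on $F^1$). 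This is a standard but delicate stationary-phase/scaling computation: one parametrizes $\lambda=i\tau-|\eta|^2-\tau^2$ (or $-\theta_1$ far out), performs the $\tau$-integral, and then the $\eta$-integral over a $(d-1)$-dimensional ball, where the kernel behaves like a $(d-1)$-dimensional heat kernel convolved against the localized data; the $L^p_y$ bound follows by Hausdorff--Young / interpolation between the $L^2_y$ bound (Plancherel, giving the sharp $(1+t)^{-(d-1)/4}$) and the $L^\infty_y$ bound (sup of the kernel). The singular prefactor $\beta$, encoding the glancing-set structure via the exponents $1/\bar s_r-1<0$, must be shown to be integrable against $d\eta$ near the glancing surfaces $\hat\tau=a_{lr}(\hat\eta)$; this is where assumption (S5) enters, ensuring the root multiplicities $\bar s_r$ are as described and the $\beta$-type singularities are mild enough to integrate — exactly the point handled in \cite{GMWZ05,Z04}.

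The main obstacle I anticipate is precisely this last point: controlling the $\beta$-weighted low-frequency kernel and extracting the sharp $(d-1)$-dimensional heat-kernel decay rate uniformly in $p\in[2,\infty]$. One has to organize the frequency integral so that the singular factor $(|\hat\tau-a_{lr}(\hat\eta)|+\hat\gamma+\rho)^{1/\bar s_r-1}$ is absorbed by the smooth change of variables adapted to each glancing surface (using the functions $\xi_{1r}(\eta)$ from (S5)), check that the resulting integrals converge and produce no loss beyond the claimed powers of $(1+t)$, and verify that the contour-shift contribution from the far part $\Rep\lambda=-\theta_1$ only adds exponentially decaying terms. All of this is by now well-developed machinery in the Zumbrun-school literature, so the proof should proceed by citing \cite{Z04,Z07} for the structure and \cite{GMWZ05} for the glancing-set bookkeeping, verifying that the inputs — our Proposition \ref{prop:resolvent} — are in the form those arguments require; no genuinely new estimate is needed at this stage.
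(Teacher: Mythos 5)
Your plan is correct and follows essentially the same route as the paper, which simply invokes Proposition \ref{prop:resolvent} together with the standard contour-integral/semigroup arguments of \cite{Z04,Z07} (with the glancing-set bookkeeping of \cite{GMWZ05}) and notes that (iii) follows from (ii) by Sobolev embedding. Your proposal merely spells out in more detail the same machinery the paper cites, so no discrepancy arises.
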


Different from the hyperbolic-parabolic case \cite{Z07} or relaxation case \cite{KwZ09}, \cite{Kw11}, we need an improved decay rate $(1+t)^{-(d-1)/4-1/2}$ not only for space-conservative but also for space-time-conservative sources. This is the purpose of the following result.

\begin{prop}
	\label{prop:dect}
	For $t \in (0,\infty)$, $f \in W^{1,1}([0,T], H^2) \cap L^1([0,T],L^2)$, $F=(0,f)$, it holds
	\begin{align*}
		\left\|\int_0^te^{L(t-s)}\partial_s F(s)ds\right\|_{1,0} &\le C(\|f(t)\|+(1+t)^{-\frac{d-1}{4}}\|f(0)\|_{ L^1\cap H^2})\\
		&\quad +C\int_0^te^{-c(t-s)}\|f(s)\|_{2
		}+(1+t-s)^{-\frac{d-1}{4}-\frac12}\| f(s)\|_{L^1}ds,
	\end{align*}
and for $p \in (2,\infty)$, $f$ additionally in $L^1([0,T], L^1 \cap H^{s_0+2}),$
	\begin{align*}
		\left\|\int_0^te^{L(t-s)}\partial_s F(\tau)ds\right\|_{L^p} &\le C(\|f(t)\|_{L^p}+(1+t)^{-\frac{d-1}{2}(1-\frac1p)}\|f(0)\|_{L^1 \cap H^{s_0+1}})\\
		&\quad +C\int_0^t(1+t-s)^{-\frac{d-1}{2}(1-\frac1p)-\frac12}\| f(s)\|_{L^1 \cap H^{s_0+2}}ds.
	\end{align*}	
\end{prop}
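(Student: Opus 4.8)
The plan is to split $e^{L(t-s)}=S_1(t-s)+S_2(t-s)$ into its low- and high-frequency parts, exploiting the fact that at bounded frequencies a time derivative is as good as a space derivative. The engine is the identity $\lambda(\lambda-L_\eta)^{-1}=I+L_\eta(\lambda-L_\eta)^{-1}$, which trades the factor $\lambda$ produced by $\partial_sF$ against the operator $L$: taking the Laplace--Fourier transform of $\int_0^t e^{L(t-s)}\partial_sF(s)\,ds$, using $\widehat{\partial_sF}(\lambda)=(0,\lambda\tilde f(\lambda)-f(0))$ and this identity, and inverting, I would write
\[
\int_0^t e^{L(t-s)}\partial_sF(s)\,ds=(0,f(t))-e^{Lt}(0,f(0))+\int_0^t e^{L(t-s)}L(0,f(s))\,ds ,
\]
which is legitimate since $f(s)\in H^2$ puts $(0,f(s))$ in $D(L)$. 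The first term contributes exactly $\|f(t)\|$ (resp. $\|f(t)\|_{L^p}$); the second, split as $S_1(t)(0,f(0))+S_2(t)(0,f(0))$, is bounded by $C(1+t)^{-(d-1)/4}\|f(0)\|_{L^1\cap H^2}$ (resp. $C(1+t)^{-\frac{d-1}{2}(1-1/p)}\|f(0)\|_{L^1\cap H^{s_0+1}}$), using Proposition~\ref{prop:dl1}(i) with $\tau=0$ for the low-frequency piece and Proposition~\ref{prop:dl1}(ii)--(iii) for the high-frequency piece (absorbing $e^{-ct}$ into the algebraic weight). It then remains to estimate $\int_0^t e^{L(t-s)}L(0,f(s))\,ds$.

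For the high-frequency part I would note that $L(0,f)=(\mathcal A^{-1}f,L^{22}f)$ and that $L^{22}$ loses one, $L^{21}$ two spatial derivatives, so $\|L(0,f)\|_{1,0}+\|L^2(0,f)\|_{1,0}\le C\|f\|_{H^2}$ and $\|L(0,f)\|_{s_0+2,s_0+1}\le C\|f\|_{H^{s_0+2}}$. Feeding this into Proposition~\ref{prop:dl1}(ii)--(iii) gives
\[
\Big\|\int_0^t S_2(t-s)L(0,f(s))\,ds\Big\|_{1,0}\le C\int_0^t e^{-c(t-s)}\|f(s)\|_{H^2}\,ds ,
\]
and in $L^p$ the same bound with $\|f(s)\|_{H^{s_0+2}}$, which after $e^{-c\sigma}\le C(1+\sigma)^{-\frac{d-1}{2}(1-1/p)-1/2}$ is one of the claimed terms.

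The crux is the low-frequency part $\int_0^t S_1(t-s)L(0,f(s))\,ds$. Using the definition of $L^{22}$ from \eqref{l} --- the multiplier $iC_2(\eta)$ in $L^{22}_\eta$ being the transverse divergence $\sum_{j=2}^d\partial_{x_j}$ --- I would write
\[
L(0,f)=\sum_{j=1}^d\partial_{x_j}\big(0,\,C^j\mathcal A^{-1}f\big)+\big(\mathcal A^{-1}f,\,-\tilde A^0\mathcal A^{-1}f\big).
\]
The first, manifestly space-conservative, sum is handled directly by Proposition~\ref{prop:dl1}(i) with $|\tau|=1$ (the fed-in sources having zero first component), giving $C\int_0^t(1+t-s)^{-\frac{d-1}{2}(1-1/p)-1/2}\|f(s)\|_{L^1}\,ds$. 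The remaining term is not a perfect derivative, but it still gains the half power, for either of two equivalent reasons. At the resolvent level, $|\lambda|\le|(\lambda,\eta)|$ and \eqref{sfre} give $|\lambda(\lambda-L_\eta)^{-1}(0,\hat g)|_{L^p}\le C\beta|\hat g|_{L^1}$, a bound of exactly the quality of \eqref{sfre2} for a space-conservative source, with no loss of the factor $|(\lambda,\eta)|^{-1}$; running the inverse-Laplace/stationary-phase argument of \cite{Z04,Z07} that underlies Proposition~\ref{prop:dl1}(i) in the case $|\tau|=1$, with this multiplier in place of $(\lambda-L_\eta)^{-1}\partial_{x_1}$, yields the desired improved-decay convolution bound with $\|f(s)\|_{L^1}$. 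Structurally, the reason is that $\big(\mathcal A^{-1}f,-\tilde A^0\mathcal A^{-1}f\big)$ is annihilated by the generalized left zero-eigenfunctions of $L_0$: one checks $L_0^*\big((\tilde A^0)^*c,\,c\big)=0$ for every constant $c$ --- using $(L^{21}_0)^*c=0$, valid because every term of $(L^{21}_0)^*$ carries a $\partial_{x_1}$ --- and then $\big\langle\big((\tilde A^0)^*c,c\big),\big(\mathcal A^{-1}g,-\tilde A^0\mathcal A^{-1}g\big)\big\rangle=c^*\!\int(\tilde A^0-\tilde A^0)\mathcal A^{-1}g\,dx_1=0$, so the projection of this source onto the slow (diffusive) modes of $L_\eta$ near $(\lambda,\eta)=0$ is $O(|\eta|)$ --- precisely the mechanism behind the extra $(1+t)^{-1/2}$.

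Two routine points finish the argument. First, the target norm $\|\cdot\|_{1,0}$ involves one $x_1$-derivative of the first component of the low-frequency output; this is not literally in Proposition~\ref{prop:resolvent}(i), but its $L^p$ bounds upgrade to $W^{1,p}$ in $x_1$ via the second-order structure of \eqref{flt} (which expresses $\partial_{x_1}^2\hat u$ through lower-order, already-controlled quantities), and it is this derivative that makes $\|f(0)\|_{L^1\cap H^2}$ (resp. $\|f(0)\|_{L^1\cap H^{s_0+1}}$) the natural norm in the $e^{Lt}(0,f(0))$ term. Second, one keeps careful track of the endpoint terms and of the passage from exponential to algebraic weights so as to land exactly on the stated right-hand sides, as in \cite{Z04,Z07}. \emph{The main obstacle} is the low-frequency estimate of the previous paragraph --- that a space-time conservative source enjoys the same half-power gain as a space-conservative one --- which is the genuinely new point here relative to the hyperbolic--parabolic and relaxation theories; everything else is bookkeeping within the framework of \cite{Z04,Z07}.
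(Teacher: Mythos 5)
Your overall route is the paper's: the semigroup identity $\int_0^t e^{L(t-s)}\partial_sF\,ds=F(t)-e^{Lt}F(0)+\int_0^t e^{L(t-s)}LF(s)\,ds$, the splitting $e^{Lt}=S_1+S_2$ with Proposition \ref{prop:dl1}(ii)--(iii) handling the high-frequency part, and, for the low-frequency part, the resolvent identity $(\lambda-L_\eta)^{-1}L_\eta\hat F=\lambda(\lambda-L_\eta)^{-1}\hat F-\hat F$ combined with $|\lambda|\le|(\lambda,\eta)|$ and \eqref{sfre}, which upgrades the $\beta|(\lambda,\eta)|^{-1}$ bound to $\beta$ and yields the extra half power of decay against $\|f\|_{L^1}$ only. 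Your decomposition of $L(0,f)$ into a space-conservative piece plus $(\mathcal A^{-1}f,-\tilde A^0\mathcal A^{-1}f)$, and the left-kernel orthogonality explanation, are unnecessary detours: the identity applied to the whole $L(0,f)$ already does the job, and the orthogonality argument is only heuristic in this Kreiss-symmetrizer framework, where no slow-mode spectral projection with quantified error is constructed.

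There is, however, a genuine gap in your low-frequency step: the resolvent identity produces, besides $\lambda(\lambda-L_\eta)^{-1}(0,\hat f)$, the leftover term $-(0,\hat f)$, whose contribution $I_2=\int_{|\eta|^2\le\theta_1+\theta_2}\oint e^{i\eta\cdot y+\lambda t}\hat F(x,\eta)\,d\lambda\,d\eta$ you never estimate; it is not covered by your remark on ``endpoint terms,'' nor does it cancel against the conservative piece you split off. This term cannot be bounded by taking absolute values on the contour: on $\{\Rep\lambda=\theta_2-|\eta|^2-|\operatorname{Im}\lambda|^2\ge-\theta_1\}$ one has $\Rep\lambda$ as large as $\theta_2>0$ near $\eta=\operatorname{Im}\lambda=0$, so the naive bound grows like $e^{\theta_2 t}$. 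The paper's fix uses that the integrand is $\lambda$-independent, so the $\lambda$-arc integral is evaluated exactly (equivalently, the contour is deformed back to $\Rep\lambda=-\theta_1$ by analyticity), giving $\oint e^{\lambda t}\,d\lambda=t^{-1}e^{-\theta_1 t}\bigl(e^{i\sqrt{\theta_1+\theta_2-|\eta|^2}\,t}-e^{-i\sqrt{\theta_1+\theta_2-|\eta|^2}\,t}\bigr)$, whence $\|I_2\|_{L^p}\le Ce^{-\theta_1 t}\|f\|_{s_0}$ after Plancherel, Sobolev embedding and interpolation. With this observation added (and the split of $L(0,f)$ discarded), your argument closes and coincides with the paper's proof.
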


\begin{proof}
	By general results on semigroups, we have for $F\in W^{1,1}([0,T],H^1 \times L^2) \cap L^1([0,T],D(L))$
	\begin{align*} 
		\int_0^te^{L(t-s)}\partial_sF(s)ds&=F(t)-e^{Lt}F(0)+\int_0^te^{L(t-s)}LF(\tau)d\tau\\
		&=F(t)-e^{Lt}F(0)+\int_0^t(S_1(t-\tau)+S_2(t-\tau))LF(\tau)d\tau.
	\end{align*}
	Thus, due to Proposition \ref{prop:dl1} and interpolation,  it suffices to show
	\begin{align*} 
		\|S_1(t)LF\| &\le C(1+t)^{-\frac{d-1}{4}-\frac12}\|f\|_{L^1\cap L^2}, \\
		\|S_1(t)LF\|_{L^p} &\le C(1+t)^{-\frac{d-1}{2}(1-\frac1p)}\|f\|_{L^1 \cap H^{s_0}}.
	\end{align*}
	
 First, the resolvent equality
	$$(\lambda-L_\eta)^{-1}L_\eta\hat F=\lambda(\lambda-L_\eta)^{-1}\hat F-\hat F,~~ \hat F \in D(L_\eta),$$
	gives 
	\begin{align*}
		S_1(t)LF&=\int_{|\eta|^2 \le \theta_1+\theta_2}\oint_{\Rep \lambda=\theta_2-|\eta|^2-|\operatorname{Im}\lambda|^2 \ge-\theta_1} e^{i\eta \cdot y+\lambda t}(\lambda-L_\eta)^{-1}\lambda\hat F(x,\eta) d\lambda d\eta\\
		&\quad +\int_{|\eta|^2 \le \theta_1+\theta_2}\oint_{\Rep \lambda=\theta_2-|\eta|^2-|\operatorname{Im}\lambda|^2 \ge-\theta_1} e^{i\eta \cdot y+\lambda t}\hat F(x,\eta) d\lambda d\eta:=I_1+I_2.
	\end{align*}
	By Proposition \ref{prop:resolvent} (i),
	$$|(\lambda-L_\eta)^{-1}\lambda \hat F|_{L^p} \le C\beta |\hat f|_{L^1},~~p \in [2,\infty],$$
	for $(\lambda,\eta) \in M_c$, $|(\lambda,\eta)|$ sufficiently small.
	Hence, using the standard calculations leading from \eqref{sfre} to Proposition \ref{prop:dl1} (i) (cf. e.g. \cite{Z07}), we find
	$$\|I_1\|_{L^p} \le C(1+t)^{-\frac{d-1}{4}(1-\frac1p)-\frac12}\|f\|_{L^1}.$$
	Due to the analyticity of $\lambda \to e^{\lambda t}$, we get for $|\eta|^2 \le \theta_1+\theta_2$
	\begin{align*}
		\oint_{\Rep \lambda=\theta_2-|\eta|^2-|\operatorname{Im}\lambda|^2 \ge-\theta_1} e^{\lambda t}d\lambda&=t^{-1}e^{-\theta_1 t}(e^{i\sqrt{\theta_1+\theta_2-|\eta|^2}t}-e^{-i\sqrt{\theta_1+\theta_2-|\eta|^2}t}), 
	\end{align*}
	which, by Plancherel's identity, implies for $t\ge 1$, $s \in \N_0$,
	$$\|I_2\|_s\le e^{-\theta_1t}\|f\|_s,$$
   and for $t \ge 1$, $\|I_2\|_s$ is clearly uniformly bounded. Thus, using Sobolev embedding and interpolation, we can conclude
   $$\|I_2\|_{L^p} \le Ce^{-\theta_1 t}\|f\|_{s_0},~~2 \le p \le \infty.$$
\end{proof}

Lastly, from Duhamel's formula and standard calculations, we obtain the following corollary.

\begin{coro}
	\label{decacy:lin}
	For all $t\in [0,\infty)$, $U_0 \in H^3 \times H^2 \cap W^{1,1} \times L^1$, $f_1 \in L^2([0,T],H^2 \cap L^1)$, $f_2 \in C([0,T],H^3 \cap L^1)$, $f_3 \in W^{1,1}([0,T], L^2) \cap L^2([0,T], H^2 \cap L^1)$ the solution  $U$ of
	$$U_t-LU=F_1+\partial_{x} F_2+\partial_t F_3,~~U(0)=U_0,~~F_j=(0_{\R^N},f_j)^t,~j=1,2,$$
	satisfies for all $t \in (0,\infty)$, $\theta>0$ and some $c>0$,
	\begin{equation}
		\label{decaylin}
		\begin{split}
			\int_0^t e^{-\theta(t-s)}\|U(s)\|^2_{1,0} ds & \le C(1+t)^{-(d-1)/2}(\|U_0\|_{3,2}^2+\|U_0\|_{W^{1,1} \times L^1}^2+\|f_3(0)\|^2_{H^2 \cap L^1})\\
			& \quad +C\int_0^tCe^{-c(t-s)}(\|f_1(s)\|_2^2+\|\partial_x f_2(s)\|_{2}^2+\|f_3\|_2^2)ds\\
			& \quad+C \left(\int_0^t(1+t-s)^{-(d-1)/4}(\|f_1(s)\|_{L^1}ds\right)^2\\
			&\quad +C\left(\int_0^t(1+t-s)^{-\frac{d-1}{4}-\frac12}(\|f_2(s)\|_{L^1}+\|f_3(s)\|_{L^1})ds\right)^2.
		\end{split}
	\end{equation}
\end{coro}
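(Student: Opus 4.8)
The plan is to invoke Duhamel's formula and reduce the estimate to the linearized decay bounds of Propositions~\ref{prop:dl1} and~\ref{prop:dect}. Writing the solution of $U_t-LU=F_1+\partial_x F_2+\partial_t F_3$, $U(0)=U_0$, $F_j=(0,f_j)^t$, as
\begin{equation*}
	U(t)=e^{Lt}U_0+\int_0^t e^{L(t-s)}\bigl(F_1(s)+\partial_x F_2(s)\bigr)\,ds+\int_0^t e^{L(t-s)}\partial_s F_3(s)\,ds,
\end{equation*}
I would split $e^{L(t-s)}=S_1(t-s)+S_2(t-s)$ as in Section~\ref{s:3}. For the first two groups of terms Proposition~\ref{prop:dl1} applies essentially verbatim: item~(i) gives $\|S_1(t)U_0\|_{1,0}\lesssim(1+t)^{-(d-1)/4}\|U_0\|_{W^{1,1}\times L^1}$, $\|S_1(t-s)F_1(s)\|_{1,0}\lesssim(1+t-s)^{-(d-1)/4}\|f_1(s)\|_{L^1}$ and, absorbing $\partial_x$ into the $\partial_x^\tau$-structure, $\|S_1(t-s)\partial_x F_2(s)\|_{1,0}\lesssim(1+t-s)^{-(d-1)/4-1/2}\|f_2(s)\|_{L^1}$, while item~(ii) gives $\|S_2(t)U_0\|_{1,0}\lesssim e^{-ct}(\|U_0\|_{1,0}+\|LU_0\|_{1,0})\lesssim e^{-ct}\|U_0\|_{3,2}$ and $\|S_2(t-s)F_1(s)\|_{1,0}+\|S_2(t-s)\partial_x F_2(s)\|_{1,0}\lesssim e^{-c(t-s)}(\|f_1(s)\|_2+\|\partial_x f_2(s)\|_2)$, where one uses that $L$ applied to a source of the form $(0,f)^t$ costs at most one derivative of $f$. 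The $H^1$-regularity of the macroscopic slot of $U$ required by $\|\cdot\|_{1,0}$ is provided, on the frequency range relevant to $S_1$, by the $(\hat u,\hat u_x)$-structure of the first-order system \eqref{lin2} underlying the resolvent bounds, and directly by Proposition~\ref{prop:dl1}(ii) for $S_2$.

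The one genuinely distinctive term is $\int_0^t e^{L(t-s)}\partial_s F_3(s)\,ds$: integrating by parts in $s$ turns it into $F_3(t)-e^{Lt}F_3(0)+\int_0^t e^{L(t-s)}LF_3(s)\,ds$, and the source $LF_3=(\mathcal A^{-1}f_3,\,L^{22}f_3)^t$ is \emph{not} of conservative form, so its low-frequency part decays only at rate $(1+t)^{-(d-1)/4}$, too slow to close the nonlinear iteration. Here I would invoke Proposition~\ref{prop:dect}, established precisely for this purpose, which yields in $\|\cdot\|_{1,0}$ the bound $\|f_3(t)\|+(1+t)^{-(d-1)/4}\|f_3(0)\|_{L^1\cap H^2}+\int_0^t e^{-c(t-s)}\|f_3(s)\|_2+(1+t-s)^{-(d-1)/4-1/2}\|f_3(s)\|_{L^1}\,ds$, with the improved rate $(1+t-s)^{-(d-1)/4-1/2}$ on the $L^1$-part. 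Collecting everything, $\|U(s)\|_{1,0}$ is dominated by a sum of (a) $C(1+s)^{-(d-1)/4}\bigl(\|U_0\|_{W^{1,1}\times L^1}+\|U_0\|_{3,2}+\|f_3(0)\|_{H^2\cap L^1}\bigr)$, (b) the exponentially damped convolution $\int_0^s e^{-c(s-\sigma)}\bigl(\|f_1\|_2+\|\partial_x f_2\|_2+\|f_3\|_2\bigr)(\sigma)\,d\sigma$, and (c), (d) the two slowly decaying convolutions $\int_0^s(1+s-\sigma)^{-(d-1)/4}\|f_1(\sigma)\|_{L^1}\,d\sigma$ and $\int_0^s(1+s-\sigma)^{-(d-1)/4-1/2}\bigl(\|f_2\|_{L^1}+\|f_3\|_{L^1}\bigr)(\sigma)\,d\sigma$.

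It then remains to square these four pieces, multiply by $e^{-\theta(t-s)}$, integrate over $s\in[0,t]$, and verify that one recovers the four lines of \eqref{decaylin}. This uses only the elementary inequalities standard in this setting (see \cite{Z04,Z07}): $\int_0^t e^{-\theta(t-s)}(1+s)^{-(d-1)/2}\,ds\lesssim(1+t)^{-(d-1)/2}$; $\int_0^t e^{-\theta(t-s)}\bigl(\int_0^s e^{-c(s-\sigma)}g(\sigma)\,d\sigma\bigr)^2ds\lesssim\int_0^t e^{-\min(\theta,c)(t-\sigma)}g(\sigma)^2\,d\sigma$ by Cauchy--Schwarz; and the convolution inequality $\int_0^t e^{-\theta(t-s)}\bigl(\int_0^s(1+s-\sigma)^{-\alpha}\psi(\sigma)\,d\sigma\bigr)^2ds\lesssim\bigl(\int_0^t(1+t-\sigma)^{-\alpha}\psi(\sigma)\,d\sigma\bigr)^2$ for $\alpha\ge0$, proved by noting that $e^{-\theta(t-s)}$ confines the outer integral to $s\approx t$. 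The computation is routine; the only points to watch are the derivative bookkeeping when $L$ hits the sources, and the fact that the genuinely new ingredient relative to the hyperbolic-parabolic and relaxation cases — the improved decay for space-time-conservative sources — has already been packaged into Proposition~\ref{prop:dect}.
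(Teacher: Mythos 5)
Your proposal is correct and is essentially the paper's own argument: the corollary is obtained from Duhamel's formula, the splitting $e^{Lt}=S_1(t)+S_2(t)$ with Proposition \ref{prop:dl1} for the $U_0$, $F_1$, $\partial_x F_2$ contributions, Proposition \ref{prop:dect} for the $\partial_t F_3$ term, and the standard weighted convolution inequalities of \cite{Z04,Z07} to pass to \eqref{decaylin}. Only a bookkeeping remark: the pointwise term $\|f_3(s)\|$ from Proposition \ref{prop:dect} should be listed among your collected pieces, but after multiplying by $e^{-\theta(t-s)}$ and integrating it is absorbed into the $\int_0^t e^{-c(t-s)}\|f_3\|_2^2\,ds$ term of the second line, so nothing changes.
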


\section{Resolvent estimates}\label{s:4}

The purpose of this section is to prove Proposition \ref{prop:resolvent} in order to complete the linear analysis.

\subsection{Small frequencies.} We start with a simple but important remark.
\begin{remark}
	\label{rem:hatU}
	Let $\hat U=(\hat U^1,\hat U^2)$ satisfy \eqref{flt2} for $\hat F=(\hat F^1,\hat F^2)$. Then $\hat U^1$ satisfies  \eqref{flt} with right hand side
	$$\hat f=(L^{22}_\eta-\lambda I)\mathcal A \hat F^1-\hat F^2,$$
	and $\hat v=\mathcal A^{-1}\hat U_2$ satisfies \eqref{flt} with
	$$\hat f=L^{21}_\eta \hat F^1 +\lambda \hat F^2.$$
\end{remark}

Thus, for bounded $|(\lambda,\eta)|$, estimates on \eqref{flt} directly imply estimates for \eqref{flt2}, which leads us to investigate the former equation or more precisely the equivalent first-order ODE \eqref{lin2}.  To this end, we follow the argumentation in \cite{GMWZ05}. First, write \eqref{lin2} as a \textit{double boundary value problem}. For a function $h$ defined on the real axis, set
$$h_+(x)=h(x),~~h_-(x)=h(-x), ~~x \ge 0,$$
and
$$\mathcal V=\begin{pmatrix} V_+\\ V_-\end{pmatrix},~~\mathcal G=\begin{pmatrix}
	 G_+ & 0\\
	0 &-  G_-
\end{pmatrix},~~\mathcal F=\begin{pmatrix}  \hat F_+\\- \hat F_-\end{pmatrix},~~ \Gamma  \mathcal V= V_+- V_-.$$
Then \eqref{lin2} is equivalent to
\begin{equation}
	\label{bvp}
	\begin{aligned}
	&\mathcal V_x-\mathcal G\mathcal V=\mathcal F,&\text{for}~ x \ge 0,\\
	&\Gamma \mathcal V=0,&\text{for}~ x=0.
	\end{aligned}
\end{equation}

The key part in obtaining estimates for solutions of \eqref{bvp} is to study the coefficient matrix of \eqref{bvp} at $x= \infty$, namely
$$\mathcal G(\infty,\lambda,\eta)=\begin{pmatrix}  G(u_+,\lambda,\eta) & 0\\
	0 & - G(-u_-,\lambda,\eta)\end{pmatrix}.$$

For notational purposes, we introduce the Fourier-Laplace symbol
\begin{align*}B'(u,\zeta)&=-\lambda^2\mathcal A(u)+i\lambda C_2(u,\eta)-B_{22}(u,\eta).\\
\end{align*}

Directly from Proposition \ref{prop:spec1}, we obtain the following corollary.

\begin{coro}
	\label{coro:spec2}
	\begin{enumerate}
		\item[(i)] There exists $c>0$ such that for $\zeta \in M_c$  the matrix $\mathcal G(\infty,\zeta)$ has (counting multiplicities) $2n$ eigenvalues with positive and $2n$ eigenvalues with negative real part.
		\item[(ii)] $0$ is a semi-simple eigenvalue of multiplicity $2n$ of $\mathcal G_\infty(0)$. The other eigenvalues (fast modes) are those of $A^1_+(B^{11}_+)^{-1}$ and $-A^1_-(B^{11}_-)^{-1}$. 
		\item[(iii)] For all $c_1>0$, $\hat \gamma\ge c_1$, any eigenvalue $\mu$ of $\mathcal G(\infty,0,\hat \zeta)$ splitting from $0$ (slow mode) admits the expansion
		\begin{equation} 
			\label{mu1}
		\mu(\rho,\hat \zeta)=\rho\mu_1(\hat \zeta)+O(\rho),~~\rho \to 0,
		\end{equation}
		where $\mu_1$ is an eigenvalue of 
		\begin{equation} 
			\label{mu2}
		\mp(A^1_\pm)^{-1}(\hat \lambda A^0_\pm+i A_{2\pm}(\hat \eta)).
		\end{equation}
		In particular, $\Rep \mu_1 \neq 0$.
	\end{enumerate}
\end{coro}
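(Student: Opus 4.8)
The plan is to read all three assertions directly off Proposition~\ref{prop:spec1}, using only that $\mG(\infty,\zeta)$ is block diagonal with diagonal blocks $G_+(\zeta)$ and $-G_-(\zeta)$. The two elementary facts I exploit are: the spectrum of $\mG(\infty,\zeta)$, counted with algebraic multiplicities, is the union of the eigenvalues of $G_+(\zeta)$ and the negatives of the eigenvalues of $G_-(\zeta)$; and the map $M\mapsto -M$ carries an eigenvalue $\mu$, together with its eigenprojection, its algebraic and geometric multiplicities, and any analytic branch in $\rho$, to $-\mu$. In particular it fixes the eigenvalue $0$, preserves semi-simplicity, and interchanges stable and unstable subspaces. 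With this dictionary the corollary is pure bookkeeping.

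For (i), Proposition~\ref{prop:spec1}(i) gives, for $c>0$ small and $\zeta\in M_c$, that neither $G_+(\zeta)$ nor $G_-(\zeta)$ has a purely imaginary eigenvalue --- hence neither does $-G_-(\zeta)$ --- and that each of $G_\pm(\zeta)$ has $N$ eigenvalues of negative and $M=2n-N$ of positive real part. Negation turns the count for the $-G_-$ block into $M$ of negative and $N$ of positive real part, so $\mG(\infty,\zeta)$ has $N+M=2n$ eigenvalues of negative real part and $M+N=2n$ of positive real part. For (ii) I set $\zeta=0$: by Proposition~\ref{prop:spec1}(ii) the eigenvalue $0$ is semi-simple of multiplicity $n$ for each of $G_+(0,0)$ and $-G_-(0,0)$, hence semi-simple of multiplicity $2n$ for $\mG(\infty,0)$, while the remaining $2n$ (fast) eigenvalues are the fast modes of $G_+(0,0)$, namely the eigenvalues of $A^1_+(B^{11}_+)^{-1}$, together with the negatives of the fast modes of $G_-(0,0)$, namely the eigenvalues of $-A^1_-(B^{11}_-)^{-1}$.

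For (iii), I apply Proposition~\ref{prop:spec1}(iii) to each diagonal block at the shared base point $\hat\zeta$ (with $\hat\gamma\ge c_1>0$) and shared small parameter $\rho$: the slow modes of $G_+(0,\hat\zeta)$ satisfy $\mu(\rho,\hat\zeta)=\rho\mu_1(\hat\zeta)+O(\rho)$ with $\mu_1$ an eigenvalue of $-(A^1_+)^{-1}(\hat\lambda A^0_+ + i A_{2+}(\hat\eta))$ and $\Rep\mu_1\neq0$, whereas negating the corresponding expansion for $G_-(0,\hat\zeta)$ shows that the slow modes of $-G_-(0,\hat\zeta)$ satisfy $-\mu(\rho,\hat\zeta)=\rho(-\mu_1(\hat\zeta))+O(\rho)$ with $-\mu_1$ an eigenvalue of $(A^1_-)^{-1}(\hat\lambda A^0_- + i A_{2-}(\hat\eta))$, again of nonzero real part; combined, these are exactly \eqref{mu1}--\eqref{mu2} with the sign $\mp$, the top sign corresponding to the $+$ block. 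I do not expect a genuine obstacle: the statement is explicitly flagged as an immediate corollary, and the proof should run to only a few lines. The one point to keep straight is how $M\mapsto -M$ acts on each ingredient of Proposition~\ref{prop:spec1} --- eigenvalue locations, the stable/unstable dimension count, semi-simplicity of $0$, and the sign of the leading term of the slow-mode expansion --- together with the observation that in (iii) the two blocks share the perturbation parameter and base point, so that both asymptotic expansions hold simultaneously; one may also note that the consistent-splitting property of $\mG(\infty,\cdot)$ on $M_c$ is then automatic, requiring no separate argument.
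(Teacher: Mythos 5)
Your argument is correct and is precisely the paper's: Corollary \ref{coro:spec2} is stated there with no separate proof (``directly from Proposition \ref{prop:spec1}''), and the intended reasoning is exactly your block-diagonal bookkeeping for $\mathcal G(\infty,\zeta)=\operatorname{diag}\bigl(G_+(\zeta),-G_-(\zeta)\bigr)$, parts (i) and (iii) being immediate in the way you describe. One caveat concerning (ii): Proposition \ref{prop:spec1}(ii) as printed says the fast modes of $G_\pm(0,0)$ are the eigenvalues of $-A^1_\pm(B^{11}_\pm)^{-1}$, and if you apply that statement verbatim and negate the second block you obtain the spectra of $-A^1_+(B^{11}_+)^{-1}$ and $+A^1_-(B^{11}_-)^{-1}$, i.e.\ the opposite signs from the corollary you are asked to prove. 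The signs you actually use are the correct ones: since $s(\bar u,0)=0$ and $S(\bar u,0)=-A^1(\bar u)$, one has $G_\pm(0)=\begin{pmatrix}0 & (B^{11}_\pm)^{-1}\\ 0 & A^1_\pm(B^{11}_\pm)^{-1}\end{pmatrix}$, so the fast modes of $G_\pm(0,0)$ are the eigenvalues of $+A^1_\pm(B^{11}_\pm)^{-1}$ and the corollary follows; the minus sign in Proposition \ref{prop:spec1}(ii) is a typo, but your write-up should say where your signs come from (this explicit form of $G_\pm(0)$) rather than silently dropping the sign when quoting the proposition.
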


Corollary \ref{coro:spec2} allows us to reduce the investigation of \eqref{bvp} to that of the limiting system, by using a so-called MZ-conjugator, whose existence follows from Corollary \ref{coro:spec2}  by virtue of the Gap Lemma 
\cite{GZ98,Z01,MZ05}.

\begin{lemma}
	\label{lem:con}
	For any $C>0$, $\Omega=S^d_c \cap \{\rho \le C\}$, there exists a smooth uniformly bounded matrix family $K \in \C^\infty([0,\infty) \times \Omega); \operatorname{Gl}_{4n})$ such that the family $K(x,\rho,\hat \zeta)^{-1}$ is also uniformly bounded, and
	\begin{enumerate}
		\item[(i)] for some $\theta>0$, $K(x,\rho,\hat\zeta)=I+O(e^{-\theta x})$,
		\item[(ii)] $K$ satisfies
		$$K_x(x)=\mG(x)K(x)-K(x)\mG(\infty).$$
	\end{enumerate}
\end{lemma}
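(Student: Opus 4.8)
The plan is to construct $K$ via the Gap Lemma of \cite{GZ98,Z01,MZ05}, which is the standard vehicle for producing a ``conjugator'' between a variable-coefficient first-order ODE and its limiting constant-coefficient system. Set $W(x) := K(x) - I$, so that (ii) becomes the inhomogeneous linear ODE
\begin{equation*}
	W_x = \mG(\infty) W - W \mG(\infty) + \big(\mG(x) - \mG(\infty)\big) K(x) = \mathcal{L}_{\mG(\infty)} W + E(x)(I + W),
\end{equation*}
where $\mathcal{L}_{\mG(\infty)}(\cdot) = \mG(\infty)(\cdot) - (\cdot)\mG(\infty)$ acts on $\C^{4n\times 4n}$ and $E(x) := \mG(x) - \mG(\infty)$. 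By the assumed exponential convergence \eqref{asym} of $\bar u$ to $u_\pm$, the coefficient error decays exponentially: $|E(x)| \le C e^{-\delta x}$ uniformly in $(\rho,\hat\zeta) \in \Omega$ (after translating everything to $x \ge 0$ for both half-lines, as in \eqref{bvp}; one works on $[0,\infty)$). The operator $\mathcal{L}_{\mG(\infty)}$ has spectrum $\{\mu_i - \mu_j\}$ where $\mu_i$ are the eigenvalues of $\mG(\infty,\zeta)$; these need not have a spectral gap at $0$ uniformly in $\Omega$, since slow eigenvalues collapse to $0$ as $\rho \to 0$ (Corollary \ref{coro:spec2}). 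This is exactly the situation the Gap Lemma is designed for: one does not need a uniform spectral gap, only that the coefficient perturbation decays at a rate $\delta$ strictly faster than any possible growth rate of the homogeneous flow near $x = +\infty$, which here is $o(1)$ as $\rho \to 0$ and in any case bounded by $C\rho$ on $\Omega$ (so $\ll \delta$ after possibly shrinking to $\rho \le C$ with $C$ small, or absorbing a fixed-size piece into the bounded-frequency regime).

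Concretely, I would fix a splitting $\C^{4n\times 4n} = P_+ \oplus P_-$ of the matrix space into the sum of generalized eigenspaces of $\mathcal{L}_{\mG(\infty)}$ with real part $\ge -\delta/2$ and $< -\delta/2$ respectively (a continuous, in fact analytic, splitting on $\Omega$ since no eigenvalue crosses $\mathrm{Re} = -\delta/2$ there once $\rho$ is small), and solve the fixed-point equation
\begin{equation*}
	W(x) = \int_\infty^x e^{\mathcal{L}_{\mG(\infty)}(x-y)} \Pi_+ E(y)(I+W(y))\, dy + \int_0^x e^{\mathcal{L}_{\mG(\infty)}(x-y)} \Pi_- E(y)(I+W(y))\, dy
\end{equation*}
in the Banach space of continuous $\C^{4n\times4n}$-valued functions on $[0,\infty)$ with norm $\sup_{x\ge0} e^{\theta x}|W(x)|$, for a suitable $0 < \theta < \delta/2$. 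The kernel estimates $|e^{\mathcal{L}_{\mG(\infty)}(x-y)}\Pi_+| \le C e^{-\delta(x-y)/2}$ for $y \ge x$ and $|e^{\mathcal{L}_{\mG(\infty)}(x-y)}\Pi_-| \le C e^{(\text{growth})(x-y)}$ for $y \le x$ (growth $\le C\rho \le \delta/4$) combine with $|E(y)| \le Ce^{-\delta y}$ to make the integral operator a contraction on a small ball once, say, $\|E\|$ is controlled — which it is, uniformly on $\Omega$. This gives a unique small solution $W$, hence $K = I + W$ satisfies (i) with the claimed exponential rate and (ii) by construction; smoothness in $(x,\rho,\hat\zeta)$ follows from smoothness of the data and the parameter-dependent contraction mapping / implicit function theorem, and invertibility of $K(x,\rho,\hat\zeta)$ for all $x$ follows since $K = I + W$ with $|W| < 1$ (shrink the ball), so $K^{-1} = \sum_{k\ge0}(-W)^k$ is also uniformly bounded.

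The main obstacle — really the only nontrivial point — is the lack of a uniform spectral gap for $\mG(\infty,\zeta)$ at the origin as $\rho \to 0$: the $2n$ slow eigenvalues all limit to $0$ (Corollary \ref{coro:spec2}(ii)), so $\mathcal{L}_{\mG(\infty)}$ has eigenvalues clustering at $0$ and the naive dichotomy estimate degenerates. The resolution, which is the content of the Gap Lemma, is that one only needs the homogeneous growth/decay rates to be dominated by the decay rate $\delta$ of the coefficient perturbation; since by Corollary \ref{coro:spec2}(iii) and compactness the slow eigenvalues are $O(\rho)$ (hence $\le \delta/4$ in absolute value on $\{\rho \le C\}$ after shrinking $C$), while the fast eigenvalues stay bounded away from the imaginary axis, one can always choose the separating line $\mathrm{Re} = -\delta/2$ cleanly, and the contraction argument closes. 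This is precisely ``just as in the parabolic, hyperbolic-parabolic, or relaxation case'' referenced before Proposition \ref{prop:ev}, so I would cite \cite{GZ98,Z01,MZ05,KwZ09,Kw11} and only indicate the adaptation, namely verifying that the exponential convergence rate $\delta$ in \eqref{asym} dominates the $O(\rho)$ eigenvalue splitting uniformly on $\Omega = S^d_c \cap \{\rho \le C\}$.
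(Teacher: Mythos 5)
Your overall strategy --- set $W=K-I$, view the conjugation ODE as the constant-coefficient Sylvester operator $\mathcal L_{\mG(\infty)}W=\mG(\infty)W-W\mG(\infty)$ plus the exponentially decaying forcing $E(x)(I+W)$, $E=\mG(\cdot)-\mG(\infty)$, and close a contraction in an exponentially weighted space --- is exactly the standard proof of the MZ conjugation/Gap Lemma, which is all the paper does here (it offers no proof, invoking Corollary \ref{coro:spec2} and \cite{GZ98,Z01,MZ05}). However, the justification of your key kernel bounds rests on a false spectral picture, and this is a genuine flaw. You assert that the growth rates of the homogeneous flow are $O(\rho)$ on $\Omega$, that the Gap Lemma requires the profile decay rate $\delta$ from \eqref{asym} to dominate them, and you propose to shrink $\{\rho\le C\}$ to arrange this. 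By Corollary \ref{coro:spec2}, only the $2n$ slow eigenvalues of $\mG(\infty,\zeta)$ are $O(\rho)$; the fast eigenvalues (those of $\pm A^1_\pm(B^{11}_\pm)^{-1}$) have real parts of order one and of both signs, so the eigenvalues $\mu_i-\mu_j$ of $\mathcal L_{\mG(\infty)}$ fill an $O(1)$ band about the imaginary axis independently of $\rho$. Consequently your stated bound for the forward kernel, $|e^{\mathcal L_{\mG(\infty)}(x-y)}\Pi_-|\le Ce^{(\mathrm{growth})(x-y)}$ with growth $\le C\rho\le\delta/4$, is wrong, and if that really were the available bound the weighted-norm estimate would not close (a forward-growing kernel integrated against $e^{-\delta y}$ produces a growing, not decaying, contribution). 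What actually closes the argument --- and what the Gap Lemma asserts --- is that \emph{no} relation between $\delta$ and the spectrum is needed: splitting at $\Rep=-\theta$ with $0<\theta<\delta$, the backward integral over the spectral part with $\Rep\ge-\theta$ gains from $|E(y)|\le Ce^{-\delta y}$, while the forward integral over the part with $\Rep<-\theta$ gains from that part's own decay at rate $\ge\theta$. In particular, shrinking $C$ is neither necessary nor permitted: the lemma is claimed for every $C>0$, and the conjugator is reused in the intermediate-frequency regime, so "absorbing" larger $\rho$ elsewhere is not an option.

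A second, related point: your claim that the splitting $P_+\oplus P_-$ is analytic on all of $\Omega$ because "no eigenvalue crosses $\Rep=-\delta/2$ once $\rho$ is small" is supported only by the same incorrect $O(\rho)$ picture. Differences of fast eigenvalues can cross any fixed vertical line as $\zeta$ varies over $\Omega$, so the spectral projections $\Pi_\pm$ need not be globally continuous in $\zeta$ for your particular choice of threshold. The standard remedy is to choose the threshold and perform the construction locally in $\zeta$ (which costs nothing here, since the subsequent resolvent analysis is carried out near base points, and since the conjugation ODE is linear in $K$, local conjugators, each equal to $I+O(e^{-\theta x})$, can be combined by a partition of unity in $\zeta$ to give the global smooth family claimed in the lemma). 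With the kernel bounds corrected as above and the splitting handled locally, your fixed-point scheme does yield the lemma, so the defect is in the justification rather than in the architecture of the proof; but as written, the domination claim and the shrinking of $C$ misstate both the mechanism and the statement being proved.
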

Clearly, $\mathcal V$ satisfies \eqref{bvp} if $\tilde \mW=K^{-1}\mathcal V$ satisfies
\begin{equation} 
	\label{bvp2}
	\begin{aligned}
		\tilde \mW_t-\mG(\infty,\zeta)\tilde \mW=K^{-1}\mF,& ~~x>0\\
		\Gamma_1(x,\zeta)\tilde \mW=0,&~~x=0,
	\end{aligned}
\end{equation}
with $\Gamma_1(x,\zeta)=\Gamma K(x,\zeta).$.

In the following, we fix a base-point $\underline{\hat \zeta} \in S^d_+$ and consider an open $\overline{S^d_c}$-neighbourhood $\Omega_0$ of $(0,\underline{\hat \zeta})$, which we will shrink if necessary. For any matrix $M \in \C^{n \times n}$, we write 
$$\check M=(A^0)^{-\frac12}M(A^0)^{-\frac12}.$$

We obtain the following decomposition of $\mathcal G(\infty)$. 

\begin{lemma}
	\label{lem:decomp}
	 There exists smooth family of basis transformation $T_1 \in C^\infty(\Omega_0,\operatorname{Gl}_{4n}(\C))$ such that for all $(\rho,\hat \zeta) \in \Omega_0$,
	$$\mathcal G_1(\rho,\hat \zeta):=T_1(\rho,\hat \zeta)^{-1}\mathcal G(\infty,\rho,\hat\zeta) T_1(\rho, \hat\zeta)=\begin{pmatrix} P_+(\rho,\hat \zeta)& 0& 0\\
	0&	P_-(\rho,\hat\zeta) & 0\\
	0& 0 &  H(\zeta)\end{pmatrix},$$
   where $P_\pm \in C^{\infty}(\Omega, \C^{k_{\pm} \times k_{\pm}})$, $k_++k_-=2n$, $H \in C^\infty(\Omega,\C^{2n \times 2n})$, such that
    \begin{align*}&\Rep P_+ \ge cI,\quad \Rep P_- \le cI\\
    &H(\rho,\hat \zeta)=\rho\tilde H(\hat \zeta,\rho)=\begin{pmatrix}
    	\rho H_+(\rho,\hat \zeta) &0\\
    0 &	\rho H_-(\rho,\hat \zeta)
    \end{pmatrix},\quad H_{\pm}(\rho,\hat \zeta)= H_{0\pm}(\hat \zeta)+\rho H_{1\pm}(\hat \zeta)+O(\rho^2),\\
&H_{0\pm}(\hat \zeta)=\mp(\check A^1_\pm)^{-1}(\hat\lambda I +i\check A_{2\pm}(\hat \eta)),\\
&H_{1\pm}(\hat \zeta)=\left(H_{0\pm}(\hat \zeta)^2(\check A_\pm^1)^{-1}\check B_\pm^{11}+H_{0\pm}(\hat \zeta)(\check A_\pm^{1})^{-1}(i\check B_{\pm12}(\hat \eta)+\hat \lambda \check C^1 +(\check A_\pm^1)^{-1}\check B'_\pm(\hat \zeta)\right)
	\end{align*}
\end{lemma}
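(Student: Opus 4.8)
The plan is to build the transformation $T_1$ in two successive stages, each justified by a standard spectral-separation argument. The eigenvalues of $\mathcal G(\infty,\rho,\hat\zeta)$ near the base point $(0,\underline{\hat\zeta})$ split, by Corollary \ref{coro:spec2}, into three groups: a cluster that remains uniformly bounded away from the imaginary axis with positive real part (the ``fast stable-for-$\mathcal G_+$'' modes, sizes adding to $k_+$), a symmetric cluster with negative real part (size $k_-$, with $k_++k_-=2n$), and the $2n$ ``slow'' modes that bifurcate from the semisimple eigenvalue $0$ at $\rho=0$. First I would isolate the two fast clusters from the slow cluster: since for $\rho$ small the slow eigenvalues are $O(\rho)$ while the fast ones stay at distance $\ge c>0$ from $0$, the associated spectral (Riesz) projectors
$$
\Pi_{\mathrm{fast}^\pm}(\rho,\hat\zeta)=\frac{1}{2\pi i}\oint_{\Gamma_\pm}(zI-\mathcal G(\infty))^{-1}\,dz,\qquad
\Pi_{\mathrm{slow}}=I-\Pi_{\mathrm{fast}^+}-\Pi_{\mathrm{fast}^-},
$$
with $\Gamma_\pm$ small fixed circles around the limiting fast eigenvalues of $\pm A^1_\pm(B^{11}_\pm)^{-1}$, are smooth in $(\rho,\hat\zeta)$ on a neighborhood $\Omega_0$ (shrinking as needed), of constant rank. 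Choosing smooth bases of their ranges assembles a first change of basis block-diagonalizing $\mathcal G(\infty)$ into $\mathrm{diag}(P_+,P_-,H)$; the $\pm$-sign of the real part on the two fast blocks gives $\Rep P_+\ge cI$, $\Rep P_-\le -cI$ (up to relabeling/rescaling $c$), as these clusters perturb continuously from the fast modes of $\mathcal G_\pm$ which by (S3) have nonzero real part of definite sign.

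The second, and more delicate, stage is the analysis of the slow block $H$. Here I would exploit the explicit structure coming from the dispersion relation. Writing the slow eigenvalue problem as in \eqref{mu0} with $\mu=\rho\nu$, the defining equation becomes, after dividing by $\rho$,
$$
\big[\rho\,\nu^2 B^{11}+\nu(\hat\lambda C^1+iB_{12}(\hat\eta))+\rho B'(\hat\lambda,\hat\eta)-(\hat\lambda A^0+iA_2(\hat\eta))-\nu A^1\big]v=0,
$$
so that at $\rho=0$ the slow modes on the $\mathcal G_+$ side are governed by $\nu A^1_++\hat\lambda A^0_++iA_{2+}(\hat\eta)=0$, i.e. $\nu$ is an eigenvalue of $-(A^1_+)^{-1}(\hat\lambda A^0_++iA_{2+}(\hat\eta))$; conjugating by $(A^0_+)^{1/2}$ on both sides turns this into $H_{0+}(\hat\zeta)=-(\check A^1_+)^{-1}(\hat\lambda I+i\check A_{2+}(\hat\eta))$, matching the claimed leading term (and likewise for the $-$ side with the opposite sign). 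Factoring $H=\rho\tilde H$ and restricting $\tilde H$ to the two invariant subspaces (which separate because, by (H$_A$), $H_{0+}$ and $H_{0-}$ have no common eigenvalue at the base point — the $\hat\gamma>0$ component of the base point ensures $\Rep\nu\ne0$ with opposite signs for the two) gives the block form $\mathrm{diag}(\rho H_+,\rho H_-)$. The expansion $H_\pm=H_{0\pm}+\rho H_{1\pm}+O(\rho^2)$ then follows by regular perturbation theory applied to the analytic (in $\rho$) reduced pencil; the first-order correction $H_{1\pm}$ is computed by differentiating the eigen-equation above in $\rho$ at $\rho=0$ and projecting, which after the $(\check A)$-conjugation produces precisely the stated combination of $H_{0\pm}^2(\check A^1_\pm)^{-1}\check B^{11}_\pm$ and $H_{0\pm}(\check A^1_\pm)^{-1}(i\check B_{\pm12}(\hat\eta)+\hat\lambda\check C^1+(\check A^1_\pm)^{-1}\check B'_\pm(\hat\zeta))$.

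The main obstacle I anticipate is not the existence of the splitting — that is routine Riesz-projector business — but ensuring everything is genuinely \emph{smooth} (not merely continuous) in $(\rho,\hat\zeta)$ up to and including $\rho=0$, and in particular that the slow block can be written with the factor $\rho$ pulled out cleanly so that $\tilde H$ itself is smooth. This requires that the slow eigenvalues vanish to exactly first order in $\rho$ with no branching, which is exactly what Corollary \ref{coro:spec2}(iii) and the nonvanishing of $\Rep\mu_1$ (equivalently, $H_{0\pm}$ has no purely imaginary eigenvalue near the base point, from (H$_A$)) guarantee; one should also check that the further splitting of the slow block into $H_+$ and $H_-$ pieces is smooth, which follows from the spectral gap between $\sigma(H_{0+})$ and $\sigma(H_{0-})$ at the base point and persists on a small enough $\Omega_0$. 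The computation of $H_{1\pm}$ is a finite, if somewhat tedious, exercise in first-order perturbation theory and I would not spell out every term; the bookkeeping of the $(A^0)^{\pm1/2}$ conjugations is the only place where sign and ordering errors are likely.
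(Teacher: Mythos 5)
Your proposal follows essentially the same route as the paper, whose proof is a one-line citation of exactly these steps: block-diagonalize $G_+$ and $G_-$ separately into fast and slow modes, apply standard matrix perturbation theory (Kato) to the slow eigenvalue equation \eqref{mu0} to get the expansion $H_\pm=H_{0\pm}+\rho H_{1\pm}+O(\rho^2)$, and conjugate by $(A^0)^{\frac12}$ to produce the checked matrices. Two of your side-justifications are, however, wrong as stated, though harmlessly so. First, the separation of the slow block into its $+$ and $-$ pieces needs no spectral gap between $\sigma(H_{0+})$ and $\sigma(H_{0-})$: since $\mathcal G(\infty)=\operatorname{diag}(G_+,-G_-)$ is already block diagonal, the two families of slow modes live in different blocks from the outset, and in any case the claim that the eigenvalues of $H_{0+}$ and $H_{0-}$ have real parts of opposite sign is false in general (each can carry both signs; this is tied to the Lax count, and moreover $\Omega_0$ contains base points with $\hat\gamma=0$, where $H_{0\pm}$ typically has purely imaginary eigenvalues --- these are precisely the hyperbolic and glancing modes treated afterwards). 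Second, the clean factorization $H=\rho\tilde H$ with $\tilde H$ smooth does not rest on $\Rep\mu_1\neq 0$ or on the slow eigenvalues vanishing to exactly first order (branching does occur at glancing points); it follows from Corollary \ref{coro:spec2}(ii), namely that $0$ is a semisimple eigenvalue of multiplicity $2n$ of $\mathcal G(\infty)(0)$, so the reduced block on the smoothly varying slow invariant subspace vanishes at $\rho=0$ and Taylor expansion in $\rho$ pulls out the factor $\rho$.
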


\begin{proof}
	This follows directly from block diagonalizing $G_+$ and $G_-$ with respect to the slow and fast modes, applying matrix perturbation  
	theory \cite{Kat85} using \eqref{mu0}, \eqref{mu1}, \eqref{mu2} and performing the basis transformation $K \to (A^{0})^{\frac12}K(A^0)^{-\frac12}$.

\end{proof}

The key feature of the expansion $H_\pm=\rho H_{0\pm}+\rho^2H_{1\pm}$ is the following.

\begin{lemma}
	\label{lem:h0} Let $(\tau,\eta) \in \R^d\setminus\{0\}$. Then
	 $\mu=i\xi_1=i\xi_1(\tau,\eta)$ is a purely imaginary eigenvalue of $H_{0\pm}(0,\tau,\eta)$  if and only if $\tau$ is an eigenvalue of $-\check A(\xi)$, $\xi=(\pm \xi_1, \eta)$. In this case, $(\xi_1,\eta) \neq 0$, and there exists  a right projector $R_\tau$ on the eigenspace corresponding to $\tau$ with left-projector $L_\tau$  such that
	$$\pm\Rep (L_\tau\check A^1H_{1\pm}(\tau,0,\eta) R_\tau) < -c|(\xi_1(\tau,\eta),\eta)|.$$
\end{lemma}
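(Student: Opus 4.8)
The plan is to unwind the definition of $H_{0\pm}$ and $H_{1\pm}$ from Lemma \ref{lem:decomp}, using the structural meaning of the matrices $W_0$, $W_1$ of condition (D1) (in the form given by Remark \ref{rem:d1}) applied at the endstates $u_\pm$. First I would establish the "if and only if" characterization of the purely imaginary spectrum of $H_{0\pm}(0,\tau,\eta)$. Recall $H_{0\pm}(\hat\zeta)=\mp(\check A^1_\pm)^{-1}(\hat\lambda I + i\check A_{2\pm}(\hat\eta))$; at $\hat\gamma = 0$ we have $\hat\lambda = i\hat\tau$, so $H_{0\pm}=\mp i (\check A^1_\pm)^{-1}(\hat\tau I + \check A_{2\pm}(\hat\eta))$. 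Thus $\mu = i\xi_1$ is an eigenvalue of $H_{0\pm}(0,\tau,\eta)$ precisely when $\det(\pm\xi_1 \check A^1_\pm + \hat\tau I + \check A_{2\pm}(\hat\eta)) = 0$, i.e. when $-\hat\tau$ is an eigenvalue of $\check A^1_\pm(\pm\xi_1) + \check A_{2\pm}(\hat\eta) = \check A_\pm(\xi)$ with $\xi = (\pm\xi_1,\eta)$, equivalently $\hat\tau$ is an eigenvalue of $-\check A_\pm(\xi)$ — after replacing $\hat\tau,\hat\eta$ by $\tau,\eta$ by homogeneity. The non-vanishing $(\xi_1,\eta)\neq 0$ follows since $\check A^1_\pm$ is invertible (by (S1)) and $(\tau,\eta)\neq 0$: if $\eta = 0$ then $\tau\neq 0$ forces $\xi_1\neq 0$.

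Next comes the quantitative dissipativity estimate, which is the heart of the lemma. The eigenvalue $i\xi_1$ of $H_{0\pm}$ is semisimple with eigenprojectors inherited from those of $\check A_\pm(\xi)$ (which is symmetrizable hence diagonalizable with real semisimple spectrum by (H$_A$), (S2)); I would fix $R_\tau$, $L_\tau$ to be right/left projectors onto the $\tau$-eigenspace of $-\check A_\pm(\xi)$ as provided by Remark \ref{rem:d1}. One then computes $L_\tau \check A^1_\pm H_{1\pm}(\tau,0,\eta)R_\tau$ by substituting the formula for $H_{1\pm}$ from Lemma \ref{lem:decomp}. On the $\tau$-eigenspace, $H_{0\pm}$ acts as the scalar $i\xi_1$ (here I must take care: restricting a product to an eigenspace requires using that $R_\tau$ projects onto a genuine eigenspace of the relevant operator, so $H_{0\pm}R_\tau = i\xi_1 R_\tau$ up to the conjugating factor $\check A^1_\pm$ — the precise bookkeeping of which matrix $R_\tau$ is an eigenprojector for is exactly the technical point to get right). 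After this substitution, the quadratic-in-$H_{0\pm}$ and linear-in-$H_{0\pm}$ terms collapse to scalars $(i\xi_1)^2 = -\xi_1^2$ and $i\xi_1$ times the appropriate matrices, and the expression $L_\tau(A^0_\pm)^{\mp?}(\cdots)R_\tau$ is seen — up to the transformation $K\mapsto (A^0)^{1/2}K(A^0)^{-1/2}$ used in the proof of Lemma \ref{lem:decomp} — to coincide with $\pm L_\mu (A^0_\pm)^{-1/2}(-B_\pm + \mu^2\mathcal A_\pm - \mu C_\pm)(A^0_\pm)^{-1/2}R_\mu$ evaluated at $\mu = i\xi_1$, $\xi = (\pm\xi_1,\eta)$, which is exactly the quantity whose real part Remark \ref{rem:d1} (i.e. condition (D1) at $u_\pm$) declares to be negative definite. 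This yields $\pm\Rep(L_\tau\check A^1_\pm H_{1\pm}R_\tau) \le -c' J^*_E J_E$ for some $c' > 0$; homogeneity of degree one of all symbols in $\xi$ then converts the bound into one proportional to $|(\xi_1(\tau,\eta),\eta)|$, giving the stated $-c|(\xi_1(\tau,\eta),\eta)|$ after adjusting the constant and using that $R_\tau$, $L_\tau$ are fixed (hence bounded above and below) once $\hat\zeta$ is restricted to a compact piece of $S^d$ — invoking (S6) to ensure $J_E W_1 J_E^*$ is symmetric so the negative-definiteness passes cleanly to the relevant quadratic form.

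The main obstacle I anticipate is the algebraic identification in the second paragraph: matching the expansion coefficient $H_{1\pm}$ from Lemma \ref{lem:decomp} — which was produced by Kato perturbation theory applied to $G_\pm$ and then conjugated by $(A^0)^{\pm 1/2}$ — against the symbol combination $-B + \mu^2\mathcal A - \mu C$ appearing in (D1)/Remark \ref{rem:d1}. This requires carefully tracking: (a) the sign conventions $\mp$ for the $+$ versus $-$ states; (b) the distinction between $\check{A}$ (the symmetrized matrix $(A^0)^{-1/2}A(A^0)^{-1/2}$) and the various "checked" versions $\check B^{11}$, $\check B_{12}$, $\check C^1$, $\check B'$ in the formula for $H_{1\pm}$; and (c) that the correct left/right projectors to use are those adapted to $\check A_\pm$, so that the second-order perturbation formula $\mu_2 = L\, (\text{second-order term} + (\text{first-order term})(\text{reduced resolvent})(\text{first-order term}))\, R$ reduces, on the eigenspace, to precisely the quadratic-form expression in (D1). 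I expect this to be essentially a (somewhat lengthy) verification that the definition of $H_{1\pm}$ in Lemma \ref{lem:decomp} was \emph{designed} to make this match work, so the main work is careful bookkeeping rather than a new idea; the only genuinely subtle point is ensuring the reduced-resolvent/off-diagonal contributions in the Kato expansion vanish upon compression to the eigenspace, which is where the semisimplicity from (H$_A$) and the structure of the block decomposition are used.
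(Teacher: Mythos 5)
Your route is the paper's own: the spectral characterization and the non\-vanishing of $(\xi_1,\eta)$ are obtained exactly as in the paper, and for the estimate you correctly identify the key steps — take $R_\tau,L_\tau$ from Remark \ref{rem:d1} (condition (D1) at $u_\pm$), compress $\check A^1_\pm H_{1\pm}$ to the eigenspace using that $L_\tau\check A^1_\pm$ is a left eigenvector of $H_{0\pm}$ so that the $H_{0\pm}$--powers collapse to the scalars $i\xi_1$, $-\xi_1^2$, and then match the compressed expression with the (D1) quadratic form by homogeneity. This is precisely the computation \eqref{xi100}--\eqref{h1} in the paper. Two corrections to your bookkeeping, the second of which matters. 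First, no Kato reduced-resolvent terms arise at this stage: $H_{1\pm}$ is already an explicit formula from Lemma \ref{lem:decomp}, so the proof is a direct algebraic identity rather than a renewed perturbation expansion; likewise (S6) plays no role here (it is only used later, in Proposition \ref{prop:dec}, to diagonalize $L\check A^1H_1R$).

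Second, the compressed quantity is \emph{not} the (D1) form ``evaluated at $\mu=i\xi_1$'', as you write; taken literally this step fails, since (D1)/Remark \ref{rem:d1} controls $-B+\mu^2\mathcal A-\mu C$ only for the \emph{real} eigenvalues $\mu$ of $\check A(\omega)$, and at $\mu=i\xi_1$ the resulting matrix is a different one whose real part (D1) says nothing about. After the collapse, the scalars $-\xi_1^2$ and $i\xi_1$ multiply only the viscosity-type blocks $\check B^{11}$ and $i\check B_{12}(\eta)+i\tau\check C^1$, while the $\check{\mathcal A}$- and $\check C_2$-contributions come from the zeroth-order block $\check B'(i\tau,\eta)=\tau^2\check{\mathcal A}-\tau\check C_2(\eta)-\check B_{22}(\eta)$. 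The identity actually needed is
$$L_\tau\check A^1H_{1}(\tau,0,\eta)R_\tau=|\xi|^2\,L_{\tilde\tau}\bigl(-\check B(\tilde\xi)+\tilde\tau^{\,2}\check{\mathcal A}-\tilde\tau\,\check C(\tilde\xi)\bigr)R_{\tilde\tau},\qquad(\tilde\tau,\tilde\xi)=|\xi|^{-1}(\tau,\xi),$$
that is, the (D1) form at the real characteristic value $\tilde\tau$ and direction $\tilde\xi$, whose real part is $<-cI$ by \eqref{rep100}; multiplying by $|\xi|^2$ then gives the claimed bound. With this correction your argument coincides with the paper's proof.
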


\begin{proof}
	We treat only the '+`-case and suppress the subscript.
	
	For an eigenvalue $\mu=i\xi_1$, $\xi_1 \in \R$, of $H_0(\tau,0,\eta)$ with eigenvector $v$,  it holds
	$$\big(i(\check A^1)^{-1}(\tau+\check A_2(\eta))+i\xi_1\big)v=0,$$
	which is clearly equivalent to
	\begin{equation} 
		\label{xi1}
		 i(\tau I +\check A_{2}(\eta)+\xi_1\check A^1) v=0.
	\end{equation}
	Naturally, $(\xi_1,\eta) \neq 0$ as otherwise $(\tau,\eta)$ would vanish as well, which contradicts the assumption. 
	
	Now, let $R_{\tilde \tau} =R_{\tilde \tau}(\tilde \xi)=R_\tau (\xi)$, $(\tilde \tau,\tilde \xi_1,\tilde \eta)=|\xi|^{-1}(\tau,\xi_1,\eta)$, be a right-projector on the eigenspace of $\tau$ with corresponding left-projector $L_{\tilde \tau}$ such that 
   \begin{equation} 
   	\label{rep100}
   \operatorname{Re}\big(L_{\tilde \tau}(-\check B(\tilde \xi)+ \check{\mathcal A}\tilde \tau^2-\tilde \tau \check C(\tilde \xi)) R_{\tilde \tau}\big) < -cI,
   \end{equation}
   which exists due to condition (D1) (cf. also Remark \ref{rem:d1}).
   By definition, we have
\begin{equation}
	\label{xi100}
  |\xi|^2\left(-\check B(\tilde \xi)+\tilde \tau^2 \check{\mathcal A}-\tilde \tau\check C(\tilde \xi)\right)=\check B'(i\tau,\eta)- \xi_1^2\check B^{11}-\xi_1(\check B_{12}(\eta)+\tau C^1),
 \end{equation}
  and
  \begin{equation} 
  	\label{ltau}
  	L_\tau \check A^1\xi_1=-L_\tau(\tau+A_2(\eta))=-iL_\tau \check A^1 H_0.
  \end{equation}
  Now, \eqref{xi100}, \eqref{ltau} imply
  \begin{equation}
  	\label{h1}
  	L_{\tilde \tau}(-\check B(\tilde \xi)+ \check{\mathcal A}\tilde \tau^2-\tilde \tau \check C(\tilde \xi)) R_{\tilde \tau})=L_\tau\check A^1H_{1}(\tau,0,\eta) R_\tau,
  \end{equation}
	 and multiplying \eqref{rep100} by $|\xi|^2$ yields the result.
\end{proof}

We further analyse the structure of $\tilde H$. First, we block diagonalize  $\tilde H$ via smooth basis-transformation $ T_{H1}$ to obtain
\begin{equation} 
	\label{q}
	T_{H1}^{-1}\tilde H T_{H1}=\begin{pmatrix}  Q_1 & \cdots & 0\\ \vdots & \ddots & \vdots\\
		0 & \ldots &	Q_{\underline k}\end{pmatrix},
\end{equation}
where each block $Q_k \in \C^{\nu_k \times \nu_k}$ corresponds to the group of eigenvalues splitting from some eigenvalue $\underline \mu_k$ of 
$ H_+(\underline{\hat \zeta})$ or $H_-(\underline{\hat \zeta})$. We order these eigenvalues in such a way that $\mu_1,\ldots,\mu_{\underline k_1}$ have positive, $\mu_{\underline k_1+1},\ldots \mu_{\underline k_2}$ have negative and $\mu_{\underline k_2+1}, \ldots \mu_{\underline k}$ have $0$ real part.  The key part in the analysis is to characterize the behaviour of the latter. If $\underline \mu_k=i \underline \xi_1$, $ \underline \xi_1 \in \R$, is a purely imaginary eigenvalue of $H_{0\pm}$, then $\underline \gamma=0$  by Lemma \ref{lem:h0}, and
$$p(\underline{\hat \tau},\pm \underline{\xi_1},\underline{\hat \eta})=0.$$
 We call the block $Q_k$ corresponding to this eigenvalue  glancing if for some $1 \le r \le \bar r$, $\pm\underline \xi_1=\xi_{1r}(\underline{\hat \eta})$, where the $\xi_{1r}$ are defined below condition (S5). In particular, $(\underline{\hat \tau}, \pm\underline{\hat\eta})$ is in the glancing set with $\pm\underline{\hat \tau}=a_{lr}^\pm (\underline{\hat \eta})$, $a_{lr}^\pm$ defined ibid.. We call the block non-glancing otherwise.  In conclusion, by  appropriately choosing $T_{H1}$, we may assume the following decomposition
 \begin{equation} 
 	\label{q2}
 	T_{H1}^{-1}\tilde H T_{H1}=\begin{pmatrix} Q_{e+} & 0& 0 & 0\\ 0 & Q_{e-} & 0 & 0\\
 		0 & 0 &	Q_{h} & 0\\
 	0&0&0& Q_g\end{pmatrix},
 \end{equation}
 where
 $$\Rep Q_{e+}>CI, \quad \Rep Q_{e-} <CI,$$
sometimes also called elliptic modes, $Q_h$ consists of the non-glancing blocks corresponding to purely imaginary eigenvalues, hyperbolic modes, and $Q_g$ consists of the glancing blocks $Q_{g1},\ldots,Q_{g\bar r}$. For a glancing block $Q_{gr}$ corresponding to an eigenvalue $\pm i\underline\xi_{1r}$, we denote by $\bar s_r$, the multiplicity of the root $\underline\xi_{1r}$ with respect to 
$$p(\underline{\hat \tau},\pm \cdot,\underline{\hat \eta})=0$$ and set
\begin{equation}
	\label{alpha}
\nu_{lr}=(|\hat \tau-a_{lr}^\pm(\hat \eta)|+\hat \gamma +\rho)^{1/\bar s_r},~\beta=\max_{lr} \nu_{lr}^{1-1/\bar s_r},~\alpha=\max_{lr} \nu_{lr}^{1-[(\bar s_r+1)/2]}.
\end{equation}
 We denote by $Q_{gr-}$ (resp. $Q_{gr+}$) the space corresponding to eigenvalues with negative (resp. positive) real part for $\rho>0$ and set
$$G_{g\pm}=\bigoplus_{r=1}^{\bar r}  Q_{gr\pm}.$$

Our main result is the following (cf. the corresponding result in \cite{GMWZ05} for 'parabolic viscosity`).

\begin{prop}
	\label{prop:dec}
	\begin{enumerate}
		\item[(i)] There exists a smooth basis transformation $T_h$ on $\Omega_0$, such that $T_h, T_h^{-1}$ are uniformly bounded and
		$$T_hQ_hT_h^{-1}=\begin{pmatrix}
			Q_{h+} & 0\\
			& Q_{h-}
		\end{pmatrix}$$
		 with
		$$\Rep Q_{h+} \ge C(\rho+\hat \gamma),\quad  \Rep Q_{h-} \le -C(\rho+\hat \gamma)$$
		\item[(ii)] 
		There exists a basis transformation $T_g$ on  $\Omega_0 \setminus\{\rho=\hat \gamma=0\}$ with
		$$|T_g| \le C,\quad |(T_g)^{-1}| \le C\beta\quad |(T_g|_{Q_{g-}})^{-1}| \le C\alpha,$$
		\begin{equation} 
			\label{tg1}
			T_gQ_gT_g^{-1}=\begin{pmatrix}
			Q_{g+} & 0\\
			& Q_{g-}
		\end{pmatrix},
	\end{equation}
		with
		\begin{equation} 
			\label{tg2}\Rep Q_{g+} \ge C(\rho+\hat \gamma)\beta,\quad  \Rep Q_{g-} \le -C(\rho+\hat \gamma)\beta.
				\end{equation}
	\end{enumerate}
\end{prop}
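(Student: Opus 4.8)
The plan is to handle the two blocks $Q_h$ (non-glancing purely imaginary modes) and $Q_g$ (glancing modes) separately, following the scheme of \cite{GMWZ05} but adapted to the present quasilinear setting via Lemma \ref{lem:h0}. For part (i), recall that $Q_h$ consists of the blocks attached to purely imaginary eigenvalues $i\underline\xi_1$ of $H_{0\pm}(\underline{\hat\zeta})$ which are \emph{non-glancing}, i.e.\ for which $\partial_{\xi_1}a_l(\underline\xi_1,\underline\eta)\neq 0$; equivalently $\underline\xi_1$ is a simple root of $\tau-a_l(\cdot,\eta)$, so by the implicit function theorem such an eigenvalue stays algebraically and geometrically simple under perturbation in $(\rho,\hat\zeta)$. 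Hence each such block is $1\times 1$ to leading order and the eigenvalue $\mu(\rho,\hat\zeta)$ depends smoothly on the parameters with $\mu(0,\underline{\hat\zeta})=i\underline\xi_1$ purely imaginary. The key computation is then a first-order expansion $\mu = i\underline\xi_1 + \rho\,\partial_\rho\mu(0,\underline{\hat\zeta}) + O(\rho^2 + \text{(perturbation)}^2)$, and I would extract $\Rep\mu$ at first order. Writing $\tilde H = H_{0\pm} + \rho H_{1\pm} + O(\rho^2)$, standard eigenvalue perturbation theory (Kato) gives $\partial_\rho\mu(0,\cdot) = L_\tau\check A^1 H_{1\pm}R_\tau / (L_\tau\check A^1 R_\tau)$ in suitably normalized projectors, and Lemma \ref{lem:h0} supplies the sign: $\pm\Rep(L_\tau\check A^1 H_{1\pm}R_\tau) < -c|(\xi_1,\eta)|$. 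Combined with the contribution of moving off $\hat\gamma=0$ (which by (D3)/\eqref{rep} and Corollary \ref{coro:spec2} also pushes $\Rep\mu$ in the stable/unstable direction proportionally to $\hat\gamma$), this yields $\Rep\mu \lesssim -c(\rho+\hat\gamma)$ on the stable side and $\gtrsim c(\rho+\hat\gamma)$ on the unstable side; the required smooth uniformly bounded transformation $T_h$ is then just the (smooth, by simplicity) spectral projector separating the two groups.

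For part (ii), the glancing blocks $Q_{gr}$ are genuinely the hard part, because here $\underline\xi_{1r}$ is a root of multiplicity $\bar s_r\geq 2$ of $\tau - a_l(\cdot,\eta)$, so the corresponding block has size $\nu_{gr}$ growing with $\bar s_r$ and the eigenvalue perturbation is non-smooth (Puiseux-type), with splitting scale $\nu_{lr} = (|\hat\tau - a_{lr}^\pm(\hat\eta)| + \hat\gamma + \rho)^{1/\bar s_r}$. Here I would follow the normal-form analysis of \cite{GMWZ05} (and \cite{MZ05}, \cite{K11}): put $Q_{gr}$ into a perturbed companion/Jordan form whose characteristic polynomial, to leading order, is a small perturbation of $(\mu - i\underline\xi_{1r})^{\bar s_r}$ plus lower-order terms governed by $p_\pm$ and $\check B^{11}$, the latter providing the dissipative correction coming from the second-order (viscous) part of the operator. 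A rescaling $\mu = i\underline\xi_{1r} + \nu_{lr}\,\tilde\mu$ renders the block $O(\nu_{lr})$-uniform, and the "parabolic" dissipativity — here encoded by condition (D1) through Lemma \ref{lem:h0}, which gives that the real part of $L_\tau\check A^1 H_{1\pm}R_\tau$ is strictly negative — guarantees that no limiting eigenvalue sits on the imaginary axis for $\rho>0$, so the rescaled block splits cleanly into a part with $\Rep \gtrsim c$ and a part with $\Rep \lesssim -c$, uniformly. Undoing the rescaling converts these into the stated bounds $\Rep Q_{g\pm} \gtrless \pm C(\rho+\hat\gamma)\beta$ with $\beta = \max\nu_{lr}^{1-1/\bar s_r}$, and the transformation $T_g$ diagonalizing the two groups is bounded, while its inverse picks up the factor $\beta$ (respectively $\alpha$ on the purely stable glancing subspace $Q_{g-}$, with the sharper exponent $1-[(\bar s_r+1)/2]$ coming from the Kreiss-symmetrizer estimate for the glancing block rather than crude norm bounds) because the block is near-defective at $\rho=\hat\gamma=0$.

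I expect the main obstacle to be the glancing analysis of part (ii): one must keep careful track of the non-smooth dependence on $(\rho,\hat\tau - a_{lr}^\pm(\hat\eta),\hat\gamma)$ and verify that the dissipative sign furnished by (D1)/Lemma \ref{lem:h0} is exactly what is needed to separate the rescaled glancing block away from the imaginary axis — i.e.\ that the role played in \cite{GMWZ05} by the Laplacian viscosity $-|\xi|^2 I$ is here played by the more degenerate $\check B(\xi)$ under condition (D1). Once that structural point is in place, the rest is bookkeeping: assembling the blocks $Q_{e\pm}$ (from Lemma \ref{lem:decomp}), $Q_h$, and $Q_{gr}$ into $Q_{g\pm}$, reading off the norms of $T_g$ and $T_g^{-1}$ via the block sizes and splitting scales, and noting uniformity on $\Omega_0\setminus\{\rho=\hat\gamma=0\}$ by compactness of $S^d$ together with the persistence assumption (S5) (which ensures the glancing surfaces are smooth, so the $\xi_{1r}(\eta)$ and the exponents $\bar s_r$ are locally constant). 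Condition (S6) is invoked exactly to guarantee that the real-part matrix $\Rep(L_\tau\check A^1 H_{1\pm}R_\tau)$ is symmetric, so that its strict negativity passes to the rescaled block without sign-indefinite cross terms.
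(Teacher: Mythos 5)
Your overall architecture does match the paper's proof (block-by-block treatment, Jordan structure at the glancing points, dissipative sign supplied by Lemma \ref{lem:h0}, Puiseux-type splitting at the scale $\nu_{lr}$), but the step that is supposed to produce \eqref{tg2} does not work as you state it. After the rescaling $\mu=i\underline\xi_{1r}+\nu_{lr}\tilde\mu$, the split eigenvalues do \emph{not} acquire real parts bounded away from zero uniformly: to leading order they are $i$ times the $\bar s_r$-th roots of $p^{-1}\bigl(\sigma-i(\hat\gamma-\rho\mu_j)\bigr)$, with $\sigma=\hat\tau-a_{lr}(\hat\eta)$, $p=\partial_{\xi_1}^{\bar s_r}a$, and $\Rep\mu_j<0$ by Lemma \ref{lem:h0}. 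In the regime $|\sigma|\gg\rho+\hat\gamma$ (e.g.\ $\hat\gamma=0$, $\rho\to0$, $\sigma$ fixed) these roots are purely imaginary to leading order, and their real parts are only of size $(\rho+\hat\gamma)|\sigma|^{1/\bar s_r-1}\sim(\rho+\hat\gamma)\beta$, which is $o(\nu_{lr})$ there. So the claimed ``clean uniform split $\Rep\tilde\mu\gtrless\pm c$'' is false, and conversely undoing such a split would yield a gap of order $\nu_{lr}$, which is not \eqref{tg2}; your stated conclusion does not follow from your intermediate step. What is actually needed --- and what the paper does --- is the explicit computation of the lower-left corners of the perturbed Jordan blocks, assembling them into $\Theta=-p^{-1}\bigl((i\sigma+\hat\gamma)I-\rho L\check A^1H_1R\bigr)$, followed by a Taylor expansion of the $\bar s_r$-th roots about $(\hat\gamma-\rho\mu_j)/\sigma=0$ to extract $|\Rep\pi_s^j|\ge C(\rho+\hat\gamma)\beta$. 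Your sketch skips exactly this computation, which is the crux of part (ii).

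The norm bounds on $T_g^{-1}$ are likewise not derived. In the paper, (S6) is used to diagonalize $L\check A^1H_1R$ with bounded eigenvectors (not merely to make its real part symmetric), which decouples the $m$ copies of the Jordan block; the eigenvectors of the perturbed block are then explicitly $(1,\pi_s^j,(\pi_s^j)^2,\dots,(\pi_s^j)^{\bar s_r-1})$ up to small errors, so $T_g$ is block-Vandermonde with generators separated by $c\,\nu_{lr}$, and the inverse-Vandermonde estimate gives $|T_g^{-1}|\le C\nu_{lr}^{1-\bar s_r}=C\beta$. The sharper bound $|(T_g|_{Q_{g-}})^{-1}|\le C\alpha$ has nothing to do with a Kreiss-symmetrizer estimate: it follows from the counting fact, read off from the explicit root formula and the sign of $p$, that at most $[(\bar s_r+1)/2]$ of the $\bar s_r$ Puiseux branches have negative real part, so the restriction to the stable glancing subspace involves a Vandermonde submatrix of size $s_-\le[(\bar s_r+1)/2]$ only, whence the exponent in $\alpha$. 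Without this counting argument and the Vandermonde structure, the bounds on $T_g^{-1}$ and $(T_g|_{Q_{g-}})^{-1}$ --- precisely the quantities fed into Lemma \ref{lem:estk} and the resolvent estimate --- remain unproved. (A minor further point: the non-glancing blocks in (i) are in general $m\times m$, not $1\times1$, since the eigenvalue $a_l$ may have multiplicity $m>1$ under (S2); the paper handles them as the case $\bar s_r=1$ of the same construction rather than by simple-eigenvalue perturbation theory.)
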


\begin{proof}
	We follow \cite[proof of Lemma 4.8]{Z01} and \cite[proof of Lemma 12.1]{GMWZ05}.
	 It is clearly sufficient to treat a single block $Q_{gr}$. W.l.o.g. assume that $Q_{gr}$ arises from $H_{0+}$ and suppress the subscript $gr$ in the following.
	
	 Let $i\underline \xi_{1}$ be a purely imaginary eigenvalue of $H_0(\underline{\hat \zeta})$. Then $\underline{\hat \gamma}=0$, and $\underline{\hat \tau}$ is an eigenvalue of $-\check A(\underline \xi)$, $\underline \xi=(\underline \xi_{1}, \underline{\hat \eta})$. In particular, $\underline{\hat \tau}= a_l(\underline \xi_1,\underline{\hat \eta})$ for some $1 \le l\le \bar l$. In the following, we just write $a$ for $a_l$. For $\xi=(\xi_1, \hat \eta)$ close to $(\underline \xi_1,\underline{\hat \eta})$, we choose a smooth basis  $r_1(\xi),\ldots,r_{m}(\xi)$ of the kernel of $a(\xi)I +\check A(\xi)$ such that the projector $R(\xi)=(r_1(\xi),\ldots,r_m(\xi))$ satisfies the properties of Remark \ref{rem:d1}. $L(\xi)=(l_1(\xi ),\ldots,l_m(\xi))$ is the projector on the left-kernel of $a(\xi)I+\check A(\xi)$ dual to $R(\xi)$.

	(ii) By (S5), for $\hat \tau,\hat \eta$ close to $\underline{\hat \tau}, \underline{\hat\eta}$, $\xi_{1r}(\hat \eta)$ is the unique root of $\hat \tau -a(\cdot,\hat \eta)$ close to $\underline \xi_1=\xi_{1r}(\underline{\hat \eta})$. 
	Additionally, the multiplicity $\bar s=\bar s_r$ of $\xi_{1r}(\eta)$ is constant, i.e.
	$$
	\partial_{\xi_1}^s a(\xi_{1r}(\hat \eta),\hat \eta)=0 \text{ for } s=1,\ldots,\bar s-1, \quad \partial_{\xi_1}^{\bar s}a( \xi_{1r}(\hat \eta), \eta) \neq 0.
	$$
	The inviscid theory (\cite{M01}, \cite{Z01}) tells us that  $i \xi_{1r}(\hat \eta)$ splits for $|\hat \tau-a(\hat\eta)|+\hat \gamma>0$ (non-smoothly) into  $m$ copies of $\bar s$ roots $\alpha_1(\hat \tau,\hat \eta),\ldots,\alpha_{\bar s}(\hat \tau,\hat \eta)$ of 
	$$
	f_{\hat \tau,\hat \eta}(\alpha)=\hat \tau+a(\alpha,\hat \eta).
	$$ 
	In addition, for
	$$
		r_{j,s}(\hat \eta)=(-1)^s\partial^s_{\xi_1}r_j(\tilde \xi_{1r}(\hat \eta),\hat \eta),~~j=1,\ldots,m~s=0,\ldots,\bar s-1,
	$$
	the family
	\begin{equation} 
		\label{jb}
		\{r_{1,0},\ldots,r_{1,\bar s-1},\ldots,r_{m,0},\ldots,r_{m,\bar s-1}\}
	\end{equation}
	is a Jordan basis of the generalized eigenspace 
	$$
		-(A^1)^{-1}(a(\hat \eta)+\check A_2(\hat \eta))
	$$
	corresponding to $\xi_{1r}(\hat \eta)$, and the genuine left eigenvectors of the dual basis are given by 
	\begin{equation} 
		\label{left}
		\tilde l_j=1/\bar s!(\partial_{\xi_1}^{\bar s}a)^{-1}l_j\check A^1=:p^{-1}l_jA^1,
	\end{equation}
	where $L=(l_1(\hat \eta),\ldots,l_m(\hat \eta))$ is the dual basis to $(r_1(\hat \eta),\ldots,r_m(\hat \eta))$ of the left-kernel of $a(\xi_{1r}(\eta),\eta)+A(\xi_{1r}(\hat \eta),\hat \eta).$
	We define the right projector corresponding to the Jordan basis  \eqref{jb} as
	$$
		\tilde R_j=(r_{j,0},\ldots,r_{j,\bar s-1}),~j=1,\ldots, m,\quad \mathcal R:=(\tilde R_1,\ldots, \tilde R_m),
	$$
	and denote by 
	$$
		\mathcal L=\begin{pmatrix}\tilde L_1 \\ \vdots \\ \tilde L_m \end{pmatrix} , \tilde L_j \in \R^{\bar s \times n}
	$$ 
	the dual left-projector. We find
	\begin{equation} 
		\label{qj1}
		\mathcal L(\hat \eta)\tilde H_0(a(\hat \eta),\hat \eta) \mathcal R(\hat \eta)=
		\begin{pmatrix}
			q(\hat \eta) & \cdots & 0 \\
			\vdots & \cdots & 0\\
			0 & \cdots & q(\hat \eta)
		\end{pmatrix},
	\end{equation}
	where 
	\begin{equation}
		\label{qj}
		q=i(\xi_{1r}(\hat \eta)+J),~~J=
		\begin{pmatrix}
			0& 1 & 0 & \cdots \\
			\vdots & \cdots & \ddots  & \vdots \\
			0 &  \cdots &   0 & 1\\
			0 & \cdots & \cdots & \cdots 
		\end{pmatrix} \in \R^{\bar s \times \bar s}.
	\end{equation}
   Thus, with $p:=\partial_{\xi_1}^{\bar s} a(\xi_{1r}(\hat \eta),\hat \eta)$, $\sigma=(\hat \tau-a(\hat \eta))$,
   \begin{align*}
   		\mathcal L(\hat \eta)\tilde H(\rho, \hat \gamma,\hat \tau,\hat \eta) \mathcal R(\hat \eta)&=
   		\begin{pmatrix}
   			q(\hat \eta) & \cdots & 0 \\
   			\vdots & \cdots & 0\\
   			0 & \cdots & q(\hat \eta)
   		\end{pmatrix}+
   		\begin{pmatrix}
   			M^{11} & \cdots & M^{1m}\\
   			\vdots & \cdots & \vdots\\
   			M^{m1} &  \cdots & M^{ m m}
   		\end{pmatrix}\\
   		&\quad+O(\rho^2+\hat \gamma^2+|\sigma|^2),
	\end{align*}
	where 
	$$
		M^{jk}=\tilde L_j\big(-(i\sigma+\hat \gamma)(\check A^1)^{-1}\mathcal + \rho H_1(a(\xi_{1r}(\hat \eta),0,\hat \eta) \big)\tilde R_k \in \C^{\bar s\times \bar s}.
	$$
	If we denote by $\theta_{jk}$ the lower left corner of $M^{jk}$, and set
	$$
		\Theta=
		\begin{pmatrix}
	 		\theta_{11} & \cdots & \theta_{1m}\\
	 		\vdots & \cdots & \vdots\\
	 		\theta_{m1} & \cdots & \theta_{m m}
		\end{pmatrix},
	$$ 
	we obtain by standard matrix perturbation theory that the eigenvalue of $\mathcal L \tilde H \mathcal R$ splits as
	$$
		\alpha_{s}^j=i\xi_{1r}(\hat \eta)+\pi_s^j(\rho,\sigma,\hat \eta)+o(|\rho|+|\hat \gamma|+|\sigma|),~0\le s \le \bar s-1, 1 \le j \le m,
	$$
	where
	$$
		\pi_s^j=(1)^{s/\bar s}i(b_j)^{1/\bar s},
	$$
	$b_1,\ldots,b_{m}$ being the eigenvalues of $-i\Theta$.
	We compute
	\begin{align*} 
		\theta_{jk}&=\tilde l_j \big(-(i\sigma+\hat \gamma)(\check A^1)^{-1}+ H_1(0,a(\hat \eta),\hat \eta)\big)r_k\\
		&=-p^{-1}\big((i\sigma+\hat \gamma)l_{j}^tr_k- l_j\rho\check A^1H_{1}(0,a(\hat \eta),\hat \eta)r_k\big),
	\end{align*}
	which means
	$$
		\Theta=-p^{-1}\big( (i\sigma+\hat \gamma)I_{\bar s}-\rho LA^1H_{1}(0,a(\hat \eta),\hat \eta)R\big),
	$$
	and
	$$
		b_j=p^{-1}(\sigma-i(\gamma-\rho \mu_j(\hat \eta))),
	$$
	i.e.,
	\begin{equation} 
		\label{pi}
		\pi_s^j=(1)^{s/\bar s}i\big(p^{-1}(\sigma(\hat\eta)-i(\gamma-\rho \mu_j(\hat \eta)))\big)^{1/\bar s},
	\end{equation}
	where $\mu_j$ is an eigenvalue of $Q=L\check A^1H_{1}(a(\hat \eta),0,\hat \eta)R$.  
	By Lemma \ref{lem:h0} and due to continuity of $\xi_{1r}(\hat \eta)$,
	$$
		\Rep \mu_j<-c_1(|\xi_{1r}(\hat \eta),\hat \eta|^2) \le -c_2.
	$$  
	Thus on $S^d_c$, $c>0$ sufficiently small, Taylor expansion of $\pi_s^j$ around 
	$$
		(\hat \gamma-\rho \mu_j)/\sigma=0
	$$
	yields
	$$
		|\Rep \pi_s^j| \ge C(\rho+\hat \gamma)\beta,
	$$
	which gives \eqref{tg2} for some smooth (but not bounded) transformation.

	For the last part of the proof, let $1 \le j \le m$. We note that by standard calculations, it follows from \eqref{pi} that the index set $S=\{s_1,\ldots,s_{\bar l}\}$ of $\pi^j_{s_l}$ with strictly negative real part is given as
	$$
		\begin{cases}
			\{1,\ldots,[\bar s/2]\} & p>0\\
			\{0,\ldots,[(\bar s-1)/2]\} & p<0\\
		\end{cases},
	$$
	in particular $s_-=|S| \le [(\bar s+1)/2]$. 

	Due to condition (S6), by choosing appropriate eigenvectors, we can assume 
	$$
		LA^1H_{1}(0,a(\hat \eta),\hat \eta)R=\operatorname{diag}(\mu_1,\ldots,\mu_{m}).
	$$
	Then a basis of eigenvectors of $\mathcal LH\mathcal R$ is given by
	$$v_s^j=\big(0_{(j-1)\bar s},1,\pi_{s}^j,(\pi_{s}^j)^2,\ldots,(\pi_s^j)^{\bar s-1},0_{\bar s(m-j+1)}\big)^t+o((|\sigma|+\rho)^{1/s}).$$
	Thus, the existence of a basis transformation $T_g$ with \eqref{tg1}, \eqref{tg2} and $|T_g| \le C$  follows from $\eqref{pi}$ and the boundedness of $r_1,\ldots,r_m$, $\partial_{\xi_1}^sa$. Clearly, $T_g$  is in leading order a block diagonal matrix
	$$
		T_g=\operatorname{diag}(T^1,\ldots,T^{m}),
	$$
 	with Vandermonde matrix blocks $T^j \in \C^{\bar s \times \bar s}$, whose generators are given by $\pi_0^m,\ldots,\pi_{\bar s-1}^m$. In particular,
	\begin{equation}
		\label{gen}
		|\pi_{s_1}^m-\pi_{s_2}^m| \ge c_1(\rho+\hat\gamma+|\sigma|)^{1/\bar s}=c_1\nu_k, 1 \le s_1 < s_1 \le \bar s
		\end{equation}
	and the estimate
	$$|(T_g)^{-1}| \le C\nu_{lk}^{1-\bar s}$$
	follows from the standard theorem about inverses of Vandermonde matrices. 

	Lastly, for a block $T^j$, $1 \le j \le m$, denote by 
	$$
		t=\begin{pmatrix}t_1 \\t_2 \end{pmatrix},
		~~t_1 \in \C^{s_- \times s_-}, t_2 \in \C^{\bar s-s_- \times s_-},$$
	the matrix whose columns are the eigenvectors in the block $T^j$ corresponding to stable eigenvalues. Clearly, $t_1$ is  a Vandermonde matrix of size $s_-\le [(\bar s+1)/2]$, whose generators also satisfy \eqref{gen}. Thus, 
	$$
		|t^{-1}| \le  c\nu_{lr}^{1-([(\bar s+1)/2]},
	$$ 
	and standard estimates (cf. e.g. \cite[proof of Lemma 12.1]{GMWZ05}) show 
	$$
		|(T^m|_{\mathcal H_{k-}})^{-1}| \le |t^{-1}| \le c\nu_{lr}^{1-([(\bar s+1)/2]}.
	$$
	Lastly, note that (i) is a special case of (ii) with $\bar s_r=1$, ``empty'' Jordan-blocks, and thus non-singular left and right eigenvectors.
\end{proof}

The reasoning below closely follows \cite[Section 12]{GMWZ05} or, more precisely, the simplified arguments \cite[Section 4]{N09}.

First, block-diagonalize the coefficient matrix $\mG$ via the  the basis transformations
\begin{align*}T_{H2}&=T_{H1} \begin{pmatrix} I_{\operatorname{dim}Q_{e+} +\operatorname{dim}Q_{e-}}& 0 & \\ 0 &T_h & 0 \\0&0&T_g\end{pmatrix},\quad \mT=T_1\begin{pmatrix}
	I_{2n}& 0 \\ 0 &T_{H2} 
\end{pmatrix}.
\end{align*}
Then by definition,
\begin{equation} 
	\label{g2}
	\mG_2:=\mT^{-1}\mG(\infty)\mT=\operatorname{diag}(P+,P-,Q_{e+},Q_
{e-},Q_{h+},Q_{h-},Q_{g+},Q_{g-}),
\end{equation}
and we get a corresponding decomposition of  $\C^{4n}$ into $\mG_2$-invariant subspaces
$$
	\C^{4n}=E_{P+}+E_{P-}+E_{e+}+E_{e-}+E_{h+}+E_{h-}+E_{g+}+E_{g-}.
$$
For notational purposes, define the index-set $J:=\{P,e,h,g\}$. Then, set
$E_{j}=E_{j+}+E_{j-}$, $j \in J$ and 
$$E_{\pm}=E_{P\pm}+E_{e\pm}+E_{h\pm}+E_{g\pm},\quad E_{H\pm}=E_{e\pm}+E_{h\pm}+E_{g\pm}.$$
Next, near our base point  $(0,\underline{\hat \zeta})$, set $\tilde{\mT}:=K\mT $,  where $K$ is the MZ-conjugator as constructed Lemma \ref{lem:con}. Now, for a solution $\mathcal V$ of \eqref{bvp}, the function $\mW=\tilde{\mT}^{-1}V$ satisfies
\begin{align} 
	\label{bvp3}
	\mW_t-\mG_\infty \mW=\bar{\mF},& ~~x>0,\\
	\label{bvp3bv}
	\Gamma_2(x,\zeta)\mW=0,&~~x=0,
\end{align}
with $\bar{\mF}=\tilde{\mT}^{-1} \mF$, $\Gamma_2=\Gamma\tilde{\mT}$.

For a vector $X \in \C^{4n}$, we use the decomposition
\begin{equation} 
	\label{v}
	X=\bigoplus_{j \in J}X_{j+}+X_{j-}, \quad X_{j\pm}\in E_{j\pm}.
\end{equation}
Due to the block-diagonal form of $\mG_2$, $\mW$ satisfies \eqref{bvp3} if and only if for all $j \in J$,
\begin{equation}
	\label{blockW}
	\mW_{j\pm}-Q_{j\pm}=\bar{\mF}_{j\pm},
\end{equation}
where $Q_{P\pm}=P\pm$.  We arrive at the following key estimate.

\begin{lemma}[\cite{N09}]
	\label{lem:estW}
	For $\bar{\mF} \in L^1$, each solution $\mW$ of \eqref{bvp3} with $W(\infty)=0$ satisfies
	\label{lem:decW}
\begin{align}
	\label{W+}
	|\mW_{j+}|^2_{L^\infty}+\kappa_{j}|\mW_{j+}|_{L^2}^2 &\le C |\bar{\mF}_{j+}|_{L^1}^2,\\
	\label{W-}
	|\mW_{j-}|^2_{L^\infty}+\kappa_{k}|\mW_{j-}|_{L^2}^2 &\le  C| \mW_{j_-}(0)|^2+ |\bar{\mF}_{j
		-}|_{L^1}^2\\
\end{align}
where $\kappa_j=1,\rho,\rho^2,\beta\rho^2$ for $j=P,e,h,g$. 
\end{lemma}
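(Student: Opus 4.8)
The plan is to exploit the block-diagonal normal form that has already been prepared. By \eqref{g2} the system \eqref{bvp3} decouples into the independent subsystems \eqref{blockW}, one for each $j\in J$ and each sign, and by Lemma \ref{lem:decomp} and Proposition \ref{prop:dec} every block carries a definite real part. Concretely --- recalling that the $e$, $h$, $g$ blocks pick up an extra factor $\rho$ through $H=\rho\tilde H$ in Lemma \ref{lem:decomp}, and that $\rho+\hat\gamma\ge c_0\rho$ on $S^d_c$ for some $c_0>0$ --- one has, uniformly on $\Omega_0\cap S^d_c$, the quadratic-form bounds $\Rep Q_{j+}\ge\kappa_j I$ and $\Rep Q_{j-}\le-\kappa_j I$ with $\kappa_j=1,\rho,\rho^2,\beta\rho^2$ for $j=P,e,h,g$. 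A one-line energy computation --- differentiate $x\mapsto|e^{Q_{j\pm}x}v|^2$ and integrate --- turns these into the uniform propagator bounds $|e^{Q_{j+}x}|\le e^{\kappa_j x}$ for $x\le0$ and $|e^{Q_{j-}x}|\le e^{-\kappa_j x}$ for $x\ge0$.

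Next I would solve each decoupled subsystem explicitly by variation of parameters, respecting $\mW(\infty)=0$. Since $Q_{j+}$ has spectrum in the open right half-plane, the unique solution of \eqref{blockW} bounded at $+\infty$ is the backward integral $\mW_{j+}(x)=-\int_x^\infty e^{Q_{j+}(x-y)}\bar{\mF}_{j+}(y)\,dy$; since $Q_{j-}$ has spectrum in the open left half-plane, $\mW_{j-}(x)=e^{Q_{j-}x}\mW_{j-}(0)+\int_0^x e^{Q_{j-}(x-y)}\bar{\mF}_{j-}(y)\,dy$ with $\mW_{j-}(0)$ still free (the boundary relation \eqref{bvp3bv} is irrelevant here; it is used only later to estimate $\mW_{j-}(0)$). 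Since the kernels are bounded by $e^{-\kappa_j|x-y|}\le1$, these formulas at once give $|\mW_{j+}|_{L^\infty}\le|\bar{\mF}_{j+}|_{L^1}$ and $|\mW_{j-}|_{L^\infty}\le|\mW_{j-}(0)|+|\bar{\mF}_{j-}|_{L^1}$, the $L^\infty$ halves of \eqref{W+}--\eqref{W-}. For the $L^2$ halves I would regard each integral as a one-sided convolution against the exponential $e^{-\kappa_j x}$, $x>0$, whose $L^2$-norm is $(2\kappa_j)^{-1/2}$, and apply Young's inequality $\|k\ast g\|_{L^2}\le\|k\|_{L^2}\|g\|_{L^1}$ to the source integrals together with the elementary bound $|e^{Q_{j-}\cdot}\mW_{j-}(0)|_{L^2}\le(2\kappa_j)^{-1/2}|\mW_{j-}(0)|$; multiplying through by $\kappa_j$ and squaring produces exactly $\kappa_j|\mW_{j+}|_{L^2}^2\le C|\bar{\mF}_{j+}|_{L^1}^2$ and $\kappa_j|\mW_{j-}|_{L^2}^2\le C(|\mW_{j-}(0)|^2+|\bar{\mF}_{j-}|_{L^1}^2)$. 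Summing over the finitely many blocks $j\in J$ then finishes the argument.

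The computations above are routine; the one thing worth watching --- and the reason the weights $\kappa_j$ are carried explicitly rather than absorbed into $C$ --- is the uniformity in $(\rho,\hat\zeta)$ of the propagator bounds, since $\kappa_j\to0$ as $\rho\to0$ and the $L^2$ estimates correspondingly degenerate like $\kappa_j^{-1}$. I would also stress that the energy argument uses only the real-part bounds on the blocks $Q_{j\pm}$ themselves, so the unboundedness of the diagonalizing map $T_g$ on the glancing subspace (the estimates $|(T_g)^{-1}|\le C\beta$ and $|(T_g|_{Q_{g-}})^{-1}|\le C\alpha$ from Proposition \ref{prop:dec}(ii)) plays no role in this lemma; it will instead be responsible for the weight $\beta$ that appears in Proposition \ref{prop:resolvent}(i) once the present $\mW$-bounds are transported back to $\mathcal V$ and thence to $\hat U$, as in \cite[Section 4]{N09}.
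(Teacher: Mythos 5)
Your argument is correct and rests on exactly the same ingredients as the paper's proof --- the block decomposition \eqref{g2} together with the Hermitian-part bounds of Lemma \ref{lem:decomp} and Proposition \ref{prop:dec} (including the extra factor $\rho$ coming from $H=\rho\tilde H$, and $\rho+\hat\gamma\gtrsim\rho$ on $S^d_c$ for $c$ small, which is how the weights $\kappa_j=1,\rho,\rho^2,\beta\rho^2$ arise). The execution differs slightly: the paper takes the scalar product of \eqref{blockW} with $\pm\mW_{j\pm}$ and integrates the resulting energy identity from $x$ to $\infty$ (resp. $0$ to $x$), which yields \eqref{W+}--\eqref{W-} in one stroke without ever writing a solution formula; you instead solve each block explicitly by Duhamel, convert the quadratic-form bounds into propagator bounds $|e^{Q_{j\pm}x}|\le e^{\mp\kappa_j x}$, and close with Young's convolution inequality. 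The two routes give the same constants and the same degeneration $\kappa_j^{-1}$ as $\rho\to0$; the only extra point your route requires --- and which you handle implicitly via $\mW(\infty)=0$ --- is the uniqueness step showing the homogeneous mode $e^{Q_{j+}x}c$ must vanish in the ``$+$'' components (valid since $\kappa_j>0$ for $\rho>0$), a step the energy argument sidesteps entirely. Your closing remarks are also accurate: the boundary condition \eqref{bvp3bv} enters only later through Lemma \ref{lem:estk}, and the singular factors $|(T_g)^{-1}|\le C\beta$, $|(T_g|_{Q_{g-}})^{-1}|\le C\alpha$ play no role in this lemma but are what produce the weight $\beta$ in Proposition \ref{prop:dec2} and Proposition \ref{prop:resolvent}~(i).
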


\begin{proof}
	We take the scalar product in $\C^{4n}$ of \eqref{blockW} with $\pm\mW_{j\pm}$  and integrate from  $x$ to $\infty$ in the ``$+$-case'' and from $0$ to $x$ in the ``$-$-case''. Then the result follows from the decomposition \eqref{g2}, Lemma \ref{lem:decomp},  and Proposition \ref{prop:dec}.
\end{proof}

As will be specified below, the desired resolvent estimates \eqref{sfre}, \eqref{sfre2} (i.e. estimates on solutions of \eqref{bvp}) follow in a rather straightforward manner from Lemma \ref{lem:estW} provided we control the boundary terms $|\mW_{j-}(0)|$. It is here, where the Evans-function condition (S5) comes into play.  

First, note that 
$$\mE_-(\rho,\hat \zeta):=\tilde{\mT}_-E_-,$$
defined near the base point, is the space of boundary values corresponding to exponentially decaying solutions of \eqref{bvp}, with $\mF=0$, for $\rho>0$. More precisely,
$$\mE_{P-}=\tilde{\mT}E_{P-},~~(\mE_{H-}(\rho,\hat \zeta)=\tilde{\mT}E_{H-})$$
are the spaces corresponding to fast (slowly) decaying solutions. Since  $P_{-}(0,\hat \zeta)$ does not depend on $\hat \zeta$, $\mE_-$ varies smoothly on $\overline{S^d_c}$. 

By definition, $\mathcal V=(V_+,V_-) \in \mE_-$ if and only if $V$ defined by
$$V_+(x)=V(x),\quad V_{-}(x)=V(-x),~~x \ge 0,$$  
is an exponentially decaying solution to \eqref{lin2} with $\mF=0$. Thus, the spectral stability assumption (S5) implies that for $\rho>0$, the intersection $\ker \Gamma \cap \mathcal E_{-}$ is trivial, which corresponds to the so-called Kreiss condition for hyperbolic BVPs. However, for $\rho=0$, $(\bar v(x),0)$ with $\bar v=\bar{u}'$ is a decaying solution of \eqref{lin2}. Thus, for all $\hat \zeta \in S^d_+$,
$$\mathcal E_{P-}(0,\hat \zeta) \cap \ker \Gamma(0,\hat \zeta) \neq \{0\}.$$
In fact,  since by (S5) the Evans function vanishes at exactly first order for $\rho=0$, this intersection is exactly the span of $(\bar v(0),0,\bar v(0),0)$. The observations above are made precise in the following lemma, whose proof goes exactly as \cite[proof of Proposition 7.1]{GMWZ05}. (Part (ii) of the result is not used until the next subsection.)

\begin{lemma}
	\label{lem:ef}
	\begin{enumerate}
		\item[(i)]
		There exists $C>0$, $\delta>0$ such that for all $(\rho,\hat \zeta) \in S^d_c \cap \{0\le \rho \le \delta\}$,
		\begin{align}
			\label{degkreiss}
			|\Gamma  \mathcal V| \ge C\rho|\mathcal V|, &\quad \mathcal V \in \mathcal E_{P-}(\rho,\hat \zeta)\\
			\label{kreiss}
			|\Gamma \mathcal V| \ge C|\mathcal V|, & \quad \mathcal V \in  \mE_{H-}(\rho,\hat \zeta)
		\end{align}
		\item[(ii)]
		For all $0<\delta<R$, there exists $C_{\delta R}$ such that for all $(\rho,\hat \zeta) \in  S^d_c \cap \{\delta \le \rho \le R\}$ and  $\mathcal V \in \mathcal E_{-}(\rho,\hat \zeta)$,
		$$|\Gamma \mathcal V| \ge C_{\delta R}|\mathcal V|.$$
		\end{enumerate}
\end{lemma}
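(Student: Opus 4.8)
The plan is to follow \cite[proof of Proposition 7.1]{GMWZ05}, adapting the bookkeeping to the present setting. First I would record the structural facts on which everything rests: $\mathcal E_-=\tilde{\mathcal T}E_-$ is the $2n$-dimensional space of boundary values at $x=0$ of solutions of \eqref{lin2} (equivalently \eqref{bvp3}) with vanishing source that decay as $x\to+\infty$; it splits continuously, and smoothly near the base point, as $\mathcal E_-=\mathcal E_{P-}\oplus\mathcal E_{H-}$ into the fast- and slowly-decaying parts, by Lemma \ref{lem:con}, Lemma \ref{lem:decomp} and Proposition \ref{prop:dec}; $\ker\Gamma$ is the diagonal of $\C^{4n}$, of dimension $2n$; and, up to a nonvanishing analytic factor, $D(\rho,\hat\zeta)$ equals the determinant of $[\,\Gamma b_1\,|\,\cdots\,|\,\Gamma b_{2n}\,]$ for any basis $b_1,\dots,b_{2n}$ of $\mathcal E_-$, identifying $\C^{4n}/\ker\Gamma$ with $\C^{2n}$ via $\Gamma$, so that $\mathcal D$ vanishes exactly when $\mathcal E_-\cap\ker\Gamma\neq\{0\}$. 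By Remark \ref{rem:phi} and the first-order vanishing of $D$ at $\rho=0$ from (S7), $\ker\Gamma(0,\hat\zeta)\cap\mathcal E_-(0,\hat\zeta)$ is exactly the line through $r_0:=(\bar v(0),0,\bar v(0),0)$, which lies in $\mathcal E_{P-}(0,\hat\zeta)$ since $\bar v=\bar u'$ decays exponentially by \eqref{asym}. As all these objects are constructed locally near $(0,\underline{\hat\zeta})$, I would prove the estimates on a small closed $\overline{S^d_c}$-neighbourhood of $(0,\underline{\hat\zeta})$ and then patch them together by a finite covering of $S^d_+$.

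Part (ii) is then a compactness statement: on $\{(\rho,\hat\zeta)\in S^d_c:\delta\le\rho\le R\}$ the matrix $\mathcal G(\infty)$ is hyperbolic, $\mathcal E_-$ is its honest stable subspace and depends continuously on $(\rho,\hat\zeta)$, and by (S7) $\mathcal D$ does not vanish there, so $\Gamma|_{\mathcal E_-}$ is injective at each point; its smallest singular value is then continuous and strictly positive, hence bounded below by some $C_{\delta R}>0$. Estimate \eqref{kreiss} is of the same flavour near $\rho=0$: the identification of $\ker\Gamma\cap\mathcal E_-$ above gives $\mathcal E_{H-}(0,\hat\zeta)\cap\ker\Gamma(0,\hat\zeta)=\{0\}$, so $\Gamma$ is injective on $\mathcal E_{H-}$ at $\rho=0$, and by continuity of $\mathcal E_{H-}$ (and boundedness of $\tilde{\mathcal T}^{\pm1}$) this persists with a uniform lower bound for $0\le\rho\le\delta$ after shrinking $\delta$ and the neighbourhood.

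The remaining estimate \eqref{degkreiss} is the delicate one, and is where the non-degeneracy $\partial_\rho D(0,\underline{\hat\zeta})\neq0$ of (S7) enters. I would fix a smooth section $r_0(\rho,\hat\zeta)$ of $\mathcal E_{P-}$ with $r_0(0,\hat\zeta)=(\bar v(0),0,\bar v(0),0)$ and a smooth complement $W(\rho,\hat\zeta)\subset\mathcal E_{P-}$, so $\mathcal E_{P-}=\C r_0\oplus W$. On $W$, $\Gamma$ is injective at $\rho=0$ (again by the identification of $\ker\Gamma\cap\mathcal E_-$), hence bounded below uniformly for small $\rho$. For the $r_0$-direction, extend $r_0$ to a basis $r_0,b_2,\dots,b_{2n}$ of $\mathcal E_-$ by a basis of $W\oplus\mathcal E_{H-}$; then $\Gamma b_2,\dots,\Gamma b_{2n}$ are independent at $\rho=0$, since $W\oplus\mathcal E_{H-}$ is a complement of $\C r_0=\ker\Gamma\cap\mathcal E_-$ in $\mathcal E_-$, while $\Gamma r_0=O(\rho)$; because $D(0,\cdot)\equiv0$, every contribution to $\partial_\rho D(0,\hat\zeta)$ in which the $\rho$-derivative misses the first column contains the vanishing column $\Gamma r_0|_{\rho=0}$ and drops out, so $\partial_\rho D(0,\hat\zeta)$ equals, up to a nonzero factor, $\det[\,\partial_\rho(\Gamma r_0)|_{\rho=0}\,|\,\Gamma b_2|_{\rho=0}\,|\,\cdots\,|\,\Gamma b_{2n}|_{\rho=0}\,]$, which by (S7) is nonzero. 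Hence $\partial_\rho(\Gamma r_0)|_{\rho=0}$ is transverse to $\operatorname{span}\{\Gamma b_j|_{\rho=0}\}$, in particular to $\Gamma W(0,\hat\zeta)$, so $\Gamma r_0(\rho,\hat\zeta)=\rho\,\partial_\rho(\Gamma r_0)|_{\rho=0}+O(\rho^2)$ has a component of size $\gtrsim\rho$ transverse to $\Gamma W$. Writing $\mathcal V=a r_0+w$ with $w\in W$, I would combine $|\Gamma\mathcal V|\gtrsim|a|\rho$ (from that transverse component) with $|\Gamma\mathcal V|\ge|\Gamma w|-|a|\,|\Gamma r_0|\gtrsim|w|-C\rho|a|$ (from the uniform bound on $W$) and, after a short case split according to whether $|w|\gtrsim\rho|a|$, conclude $|\Gamma\mathcal V|\ge C\rho(|a|+|w|)\gtrsim C\rho|\mathcal V|$ for all $\mathcal V\in\mathcal E_{P-}$ and small $\rho$. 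Uniformity in $\hat\zeta$ near $\underline{\hat\zeta}$ follows from continuity of all the data, in particular of $D$ and $\partial_\rho D$ up to $\rho=0$ (Proposition \ref{prop:ev}), together with compactness.

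The main obstacle I anticipate is precisely this last step: verifying that the Evans function, written in the $\Gamma$-reduced coordinates, has $\rho$-derivative at $\rho=0$ governed exactly by $\partial_\rho(\Gamma r_0)$, and then converting that non-degeneracy into the sharp linear-in-$\rho$ lower bound for $\Gamma$ on $\mathcal E_{P-}$; everything else reduces to continuity and compactness once the MZ-conjugator of Lemma \ref{lem:con} and the block decomposition of Proposition \ref{prop:dec} are in hand.
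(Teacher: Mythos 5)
Your proposal is correct and follows essentially the same route as the paper, which proves Lemma \ref{lem:ef} by referring to the proof of Proposition 7.1 in \cite{GMWZ05}: the key points there are exactly the ones you reconstruct, namely that $\ker\Gamma\cap\mathcal E_-$ at $\rho=0$ is the span of $(\bar v(0),0,\bar v(0),0)$, which lies in $\mathcal E_{P-}$, that the first-order vanishing of $D$ together with $\partial_\rho D(0,\hat\zeta)\neq 0$ from (S7) converts into the linear-in-$\rho$ lower bound \eqref{degkreiss} on $\mathcal E_{P-}$, and that \eqref{kreiss} and part (ii) follow from injectivity of $\Gamma$ on $\mathcal E_{H-}$, respectively on $\mathcal E_-$, plus continuity and compactness. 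No gaps beyond the level of detail the cited argument itself supplies.
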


In variables $\mW=\tilde{\mT}^{-1}V$, this gives.

\begin{lemma}
	\label{lem:estk}
	There exists $\delta>0$ such that for all $(\rho,\hat \zeta) \in \Omega_0$, $\mW_- \in E_-(\rho,\hat \zeta)$,
	$$|\Gamma_2 \mW_-| \ge \delta(\rho|\mW_{P-}|+|\mW_{e-}|+|\mW_{h_-}|+\alpha^{-1}|\mW_{g-}|),$$
	with $\alpha$ defined in \eqref{alpha}.
\end{lemma}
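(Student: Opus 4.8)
The plan is to transfer the estimates of Lemma \ref{lem:ef}(i) from the $V$-variables to the $\mW$-variables by means of the basis transformation $\tilde{\mT}=K\mT$, tracking carefully the asymmetry of the bounds on $T_g$ versus $T_g^{-1}$ established in Proposition \ref{prop:dec}(ii). Recall that by definition $\Gamma_2=\Gamma\tilde{\mT}$ and $\mE_-=\tilde{\mT}E_-$, so $\Gamma_2\mW_-=\Gamma\tilde{\mT}\mW_-=\Gamma\mathcal V$ for $\mathcal V=\tilde{\mT}\mW_-\in\mE_-$. Decomposing $\mW_-=\mW_{P-}+\mW_{e-}+\mW_{h-}+\mW_{g-}$ according to \eqref{v} and correspondingly $\mathcal V=\mathcal V_{P-}+\mathcal V_{H-}$ with $\mathcal V_{P-}=\tilde{\mT}\mW_{P-}$ and $\mathcal V_{H-}=\tilde{\mT}(\mW_{e-}+\mW_{h-}+\mW_{g-})\in\mE_{H-}$, I would apply Lemma \ref{lem:ef}(i): the degenerate bound \eqref{degkreiss} gives $|\Gamma\mathcal V_{P-}|\ge C\rho|\mathcal V_{P-}|$ and the non-degenerate bound \eqref{kreiss} gives $|\Gamma\mathcal V_{H-}|\ge C|\mathcal V_{H-}|$. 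Since $\tilde{\mT}=K\mT$ with $K=I+O(e^{-\theta x})$ and $K,K^{-1},\mT,\mT^{-1}$ uniformly bounded on $\Omega_0$ (Lemma \ref{lem:con} together with Lemma \ref{lem:decomp}), one has $|\mathcal V_{P-}|\gtrsim|\mW_{P-}|$, hence $|\Gamma\mathcal V_{P-}|\gtrsim\rho|\mW_{P-}|$.

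The genuinely delicate point is the $H-$ block, where one must not simply use that $\tilde{\mT}^{-1}$ is bounded — it is not, because of the $Q_{g-}$ piece. Instead I would argue componentwise, using that on $\mW_{e-}$ and $\mW_{h-}$ the relevant blocks $T_h$ and (the identity block for) $Q_{e-}$ are uniformly bounded with uniformly bounded inverses (Proposition \ref{prop:dec}(i)), so these pull back with no loss: $|\mathcal V_{H-}|\gtrsim|\mW_{e-}|+|\mW_{h-}|$. For the glancing component, the restriction $(T_g|_{Q_{g-}})^{-1}$ has norm $\le C\alpha$ by Proposition \ref{prop:dec}(ii); hence $|\mW_{g-}|=|(T_g|_{Q_{g-}})^{-1}(T_g|_{Q_{g-}})\mW_{g-}|\le C\alpha\,|(T_g|_{Q_{g-}})\mW_{g-}|\lesssim\alpha\,|\mathcal V_{H-}|$, i.e. $|\mathcal V_{H-}|\gtrsim\alpha^{-1}|\mW_{g-}|$. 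Combining, $|\Gamma\mathcal V_{H-}|\ge C|\mathcal V_{H-}|\gtrsim|\mW_{e-}|+|\mW_{h-}|+\alpha^{-1}|\mW_{g-}|$.

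To conclude I would add the two lower bounds. Since $\Gamma\mathcal V=\Gamma\mathcal V_{P-}+\Gamma\mathcal V_{H-}$, the triangle inequality alone only gives a bound by the difference; the clean way is to note that $\mE_{P-}$ and $\mE_{H-}$ are transversal with a uniform angle bound on $\overline{S^d_c}$ (this is part of the decomposition underlying Lemma \ref{lem:ef}, coming from the spectral separation of $P_-$ from the slow modes $H$ in Lemma \ref{lem:decomp}), so that $|\Gamma\mathcal V|\gtrsim\max(|\Gamma\mathcal V_{P-}|,|\Gamma\mathcal V_{H-}|)\gtrsim|\Gamma\mathcal V_{P-}|+|\Gamma\mathcal V_{H-}|$ up to a fixed constant. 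This yields
$$|\Gamma_2\mW_-|=|\Gamma\mathcal V|\ge\delta\big(\rho|\mW_{P-}|+|\mW_{e-}|+|\mW_{h-}|+\alpha^{-1}|\mW_{g-}|\big),$$
as claimed. The main obstacle I anticipate is precisely the bookkeeping at the glancing block: one must keep the $\alpha$-factor on the correct side (it degrades the lower bound on $\mW_{g-}$, consistent with the $\beta$-loss in the final resolvent estimate) and verify that the transversality/angle estimate between $\mE_{P-}$ and $\mE_{H-}$ is indeed uniform down to $\rho=0$, which is where the smooth extension of $\mE_-$ to $\overline{S^d_c}$ (noted after Lemma \ref{lem:estW}, using that $P_-(0,\hat\zeta)$ is $\hat\zeta$-independent) is used.
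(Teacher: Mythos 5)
Your blockwise transfer of the estimates is exactly the paper's argument: on $E_{P-}$, $E_{e-}$, $E_{h-}$ one uses the boundedness of $K^{-1}$, $T_1^{-1}$, $T_{H1}^{-1}$, $T_h^{-1}$ together with \eqref{degkreiss}, \eqref{kreiss}, and on the glancing block one pays the factor $|(T_g|_{Q_{g-}})^{-1}|\le C\alpha$ from Proposition \ref{prop:dec}(ii); you also keep the $\alpha$ on the correct side. The paper's proof consists of precisely these per-block computations and nothing more.

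The step where your argument has a genuine flaw is the combination of the $P-$ and $H-$ contributions. You claim that uniform transversality of the subspaces $\mE_{P-}$ and $\mE_{H-}$ (coming from the spectral separation in Lemma \ref{lem:decomp}) yields $|\Gamma\mathcal V|\gtrsim\max\bigl(|\Gamma\mathcal V_{P-}|,|\Gamma\mathcal V_{H-}|\bigr)$. That implication is false in general: $\Gamma$ maps $\C^{4n}\to\C^{2n}$ and has a $2n$-dimensional kernel, so two subspaces with uniformly bounded angle can be mapped by $\Gamma$ onto (nearly) coincident subspaces, allowing $\Gamma\mathcal V_{P-}$ and $\Gamma\mathcal V_{H-}$ to cancel even though each individual Kreiss bound holds. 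What rules out such cancellation here is not spectral separation but the Evans condition (S7): the statement one actually needs is the combined degenerate Kreiss estimate $|\Gamma(\mathcal V_P+\mathcal V_H)|\ge C\bigl(\rho|\mathcal V_P|+|\mathcal V_H|\bigr)$ for $\mathcal V_P\in\mE_{P-}$, $\mathcal V_H\in\mE_{H-}$, which is exactly the content of \cite[Prop.~7.1]{GMWZ05} (and is how Lemma \ref{lem:ef}(i) is meant to be read and used), encoding that $D$ vanishes at $\rho=0$ to exactly first order and only in the $\bar u'$-direction. With that combined form of Lemma \ref{lem:ef}(i) as input, your norm-transfer computations give the lemma immediately and no additional transversality argument is needed; as justified in your write-up, however, the final inequality does not follow. (The paper sidesteps the issue by stating the proof block by block, but the subsequent use of the lemma — absorbing $\rho^2|\mW_{P-}(0)|^2+\dots+\alpha^{-2}|\mW_{g-}(0)|^2$ by $|\mW_+|_{L^\infty}^2$ — does require the combined estimate, so identifying its true source matters.)
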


\begin{proof}
	By definition, for $\mW \in \C^4$, we have $\mW \in E_{j\pm}$ if and only if $\mathcal V=\tilde{\mathcal T}\mW \in \mE_{j\pm}, j \in J$. Thus for $\mW \in E_{P-}$, we find by definition of the basis transformation $\tilde{\mT}$, the boundedness of $T_1^{-1}$ (Lemma \ref{lem:decomp}) and \eqref{degkreiss},
	$$|\Gamma_2 \mW|=|\Gamma \mathcal V| \ge C\rho |\mathcal V|=C\rho|K\operatorname{diag}(T_1,I_{2n})\mW| \ge C\rho|\mW|.$$
	Analogously, the boundedness of $T_{H1}^{-1}, T_{h}^{-1}$ (Proposition \ref{prop:dec}) and \eqref{degkreiss} imply for $\mW \in E_{e-}$ or $\mW \in E_{h-}$ 
	$$|\Gamma_2 \mW|=|\Gamma \mathcal V| \ge C |\mathcal V| \ge C|\mW|.$$
	Lastly, for $\mW \in E_{g-}$, we get from Proposition \ref{prop:dec}
	$$|\Gamma_2 \mW| \ge C |KT_1T_{H1}\operatorname{diag}(I,|(T_g|_{Q_{g-}})^{-1}|\mathcal W| \ge C\alpha^{-1}|\mW|.$$
\end{proof}

On the other hand, $\Gamma_2\mW|_{x=0}=0$ implies that for decaying solutions $\mW$ of \eqref{bvp3}, \eqref{bvp3bv}, we have
\begin{equation} 
	\label{Gamma2}
	|\Gamma_2 \mW_-| \le |\Gamma_2 \mW|+|\Gamma_2 \mW_+| \le C|\mW_+|_{L^\infty}.
\end{equation}
For each $j$, we multiply  \eqref{W+} by a sufficiently large constant $\delta^{-1}$,
  \eqref{W-} by $\rho^{2},1,1,\alpha^{-2}$ for $j=P,e,h,g$, and sum the resulting equations to find 
$$\delta^{-1}|\mW_+|^2_{L^{\infty}}-C_1(\rho^2|\mW_{P-}(0)|^2+|\mW_{e-}(0)|^2+|\mW_{h-}(0)|^2+\alpha^{-2}|\mW_{g+}(0)|^2)+\rho^2|\mW|_{L^2}^2 \le C|\bar{\mF}|_{L^1}^2,$$
where we also used $\rho \le \alpha^{-1} \le 1$ and $\beta\alpha^{-2} \ge 1$. Thus, Lemma \ref{lem:estW} and \eqref{Gamma2} yield for $\delta$ sufficiently small
\begin{equation} 
	\label{decW}
\rho^2(|\mW|_{L^\infty}^2+|\mW|_{L^2}^2) \le C|\bar{\mF}|_{L^1}^2.
\end{equation}
In terms of solutions $\mathcal V$ to the original problem \eqref{bvp}, this gives the following crucial estimate.
\begin{prop}
	\label{prop:dec2}
	There exist $c,c_1>0$ such that for $(\rho,\hat \zeta) \in S^d_c$ with $\rho \le c_1$ and $\mF \in L^1$, the solution $\mathcal V$ to \eqref{bvp} satisfies for $p \in [2,\infty]$
	$$|\mathcal V|_{L^p}\le C\frac{\beta|\mF|_{L^1(x)}}{\rho}.$$
\end{prop}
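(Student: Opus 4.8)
The plan is to obtain the bound as an essentially immediate consequence of estimate \eqref{decW} --- already derived above for the conjugated, block-diagonalized variable $\mathcal W=\tilde{\mathcal T}^{-1}\mathcal V$ --- by transferring it back to $\mathcal V$, interpolating in $p$, and globalizing by compactness.

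First I would note that, for $\zeta\in M_c$ with $\rho>0$ small, the decaying solution $\mathcal V$ of \eqref{bvp} corresponds under $\mathcal W=\tilde{\mathcal T}^{-1}\mathcal V$ to the unique solution of \eqref{bvp3}, \eqref{bvp3bv} with $\mathcal W(\infty)=0$, uniqueness being precisely the Kreiss condition of Lemma \ref{lem:ef}, i.e.\ the spectral stability assumption (S7). Hence \eqref{decW} applies and gives $\rho^2(|\mathcal W|_{L^\infty}^2+|\mathcal W|_{L^2}^2)\le C|\bar{\mathcal F}|_{L^1}^2$ with $\bar{\mathcal F}=\tilde{\mathcal T}^{-1}\mathcal F$. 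Interpolating the $L^2$ and $L^\infty$ bounds via $|\mathcal W|_{L^p}\le|\mathcal W|_{L^2}^{2/p}|\mathcal W|_{L^\infty}^{1-2/p}$ then yields $\rho\,|\mathcal W|_{L^p}\le C|\bar{\mathcal F}|_{L^1}$ for every $p\in[2,\infty]$.

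Next I would pass back to $\mathcal V$ using the mapping bounds for $\tilde{\mathcal T}=K\mathcal T$ collected in Lemmas \ref{lem:con}, \ref{lem:decomp} and Proposition \ref{prop:dec}. On one hand, $K,K^{-1}$ are uniformly bounded and $\mathcal T=T_1\operatorname{diag}(I_{2n},T_{H1}\operatorname{diag}(I,T_h,T_g))$ is uniformly bounded, since $T_1,T_{H1},T_h$ are and $|T_g|\le C$; hence $|\mathcal V|_{L^p}\le C|\mathcal W|_{L^p}$. On the other hand, $\mathcal T^{-1}$ is bounded by $C\beta$, the only factor degenerating as $\rho\to0$ being $T_g^{-1}$ with $|T_g^{-1}|\le C\beta$ by Proposition \ref{prop:dec}(ii); hence $|\bar{\mathcal F}|_{L^1}=|\mathcal T^{-1}K^{-1}\mathcal F|_{L^1}\le C\beta\,|\mathcal F|_{L^1}$. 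Combining the three displays gives $\rho\,|\mathcal V|_{L^p}\le C\beta\,|\mathcal F|_{L^1}$, which is the asserted estimate.

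Finally, since $\tilde{\mathcal T}$ and the whole chain culminating in \eqref{decW} were constructed only on a neighborhood $\Omega_0$ of a fixed base point $(0,\underline{\hat\zeta})$ with $\underline{\hat\zeta}\in S^d_+$, I would globalize in the standard way: the $\rho=0$ face of $\overline{S^d_c}\cap\{\rho\le c_1\}$ is $\{0\}\times S^d_+$, which is compact, so it is covered by finitely many such $\Omega_0$; taking $c,c_1$ small enough and $C$ the largest of the finitely many constants gives the estimate uniformly on $S^d_c\cap\{\rho\le c_1\}$. I do not expect a genuine obstacle here: the substantive work --- the block decomposition of Lemma \ref{lem:decomp}, the glancing-mode analysis of Proposition \ref{prop:dec}, the $L^2$--$L^\infty$ energy estimates of Lemma \ref{lem:estW}, and the Evans/Kreiss lower bound of Lemma \ref{lem:estk} --- is already in hand. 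The only points needing care are the exact power of $\beta$ inherited from $T_g^{-1}$ and the verification that the weights $\delta^{-1},\rho^2,1,1,\alpha^{-2}$ used in deriving \eqref{decW} leave a clean lower bound $\gtrsim\rho^2(|\mathcal W|_{L^\infty}^2+|\mathcal W|_{L^2}^2)$, which rests on $\rho\le\alpha^{-1}\le1$, $\beta\ge1$, and $\beta\alpha^{-2}\ge1$.
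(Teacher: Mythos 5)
Your proposal is correct and follows essentially the same route as the paper: both rest on the estimate \eqref{decW} for $\mathcal W=\tilde{\mathcal T}^{-1}\mathcal V$, transfer back to $\mathcal V$ via the uniform boundedness of $\tilde{\mathcal T}$ and the bound $|T_g^{-1}|\le C\beta$ (which accounts for the factor $\beta$), use $L^2$--$L^\infty$ interpolation for the intermediate $p$, and globalize by compactness of $\overline{S^d_c}\cap\{\rho\le c_1\}$. The paper's proof is exactly this chain of inequalities stated locally near each base point.
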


\begin{proof}
	Due to compactness of $\overline{S^d_c} \cap \{\rho \le c_1\}$, we can argue locally. By the previous arguments, for a solution $\mathcal V$ to \eqref{bvp}, $\mathcal W=\tilde{\mathcal T}^{-1}\mathcal V$ satisfies \eqref{decW} with $\bar{\mF}=\tilde{\mT}^{-1}\mF$. Then, Lemmas \ref{lem:con}, \ref{lem:decomp} and Proposition \ref{prop:dec} yield
	$$|\mathcal V|_{L^2} \le |T_g| |\mathcal W| \le C\rho^{-1}|\bar{\mF}|_{L^1}\le C\rho^{-1}|T_g^{-1}||\mF|_{[L^1} \le C\beta\rho^{-1}|\mF|_{L^1}.$$
	The same is true with the $L^2$-norm replaced by the $L^\infty$-norm and thus for all $L^p$-norms by interpolation.
\end{proof}
This implies the following estimates for a solution $\hat u$ to \eqref{flt}. 

\begin{coro}
	\label{coro:aux}
	There exist $c,c_1>0$ such that for $(\rho,\hat \zeta) \in S^d_c\cap\{\rho \le c_1\}$,  the solution
	$ \hat u$  to  \eqref{flt} satisfies:
	\begin{enumerate} 
	\item[(i)] For $\hat f \in L^1$ and all $p \in [2,\infty]$,
	$$|\hat u|_{W^{1,p}}\le C\rho^{-1}\beta|\hat f|_{L^1}.$$
	\item[(ii)] If $\hat f=g_x \in L^1$ for some differentiable function $g$,  then for $p \in [2,\infty]$,
	$$|\hat u|_{L^p} \le C\beta|g|_{L^1}.$$
	\end{enumerate}
\end{coro}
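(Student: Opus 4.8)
The plan is to transfer the estimates of Proposition~\ref{prop:dec2} for the folded system \eqref{bvp} back to the scalar second-order equation \eqref{flt}. Writing $\hat v$ for the flux variable, $V=(\hat u,\hat v)$ is the decaying solution of \eqref{lin2} with source $\hat F=(0,\hat f)$, and after folding $\mathcal V=(V_+,V_-)$ solves \eqref{bvp} with $|\mathcal F|_{L^1(x\ge 0)}=|\hat f|_{L^1(\R)}$; Proposition~\ref{prop:dec2} gives $|\mathcal V|_{L^p}\le C\rho^{-1}\beta|\hat f|_{L^1}$. Since by (S3) and compactness of $\bar u(\R)$ the matrix $B^{11}(\bar u(x))$ is uniformly invertible with bounded inverse, one recovers $\hat u$ and $\hat u_x$, hence $|\hat u|_{W^{1,p}}$, from $|\mathcal V|_{L^p}$; this is (i).

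\textbf{Part (ii): gaining a power of $\rho$.} When $\hat f=g_x$, I would first absorb $g$ into the flux: setting $\hat v:=B^{11}\hat u_x+S\hat u-g$, equation \eqref{flt} becomes a first-order system $\partial_x(\hat u,\hat v)=\tilde G(x,\zeta)(\hat u,\hat v)+((B^{11})^{-1}g,0)$, where $\tilde G$ differs from the matrix $G$ of \eqref{lin2} only by a conjugation and an exponentially localized additive term (proportional to $\bar u'$, hence $O(e^{-\delta|x|})$), so $\tilde G(x,\zeta)\to\tilde G_\pm(\zeta)$ with the same spectral structure as $G_\pm$, and the entire machinery of Section~\ref{s:4} — Gap Lemma, MZ-conjugator (Lemma~\ref{lem:con}), block decompositions (Lemma~\ref{lem:decomp}, Proposition~\ref{prop:dec}), and the Evans condition (S7) via Proposition~\ref{prop:ev} — applies verbatim to it. The crucial point is that the source now sits entirely in the flux component, while at $\rho=0$ the left-eigenvectors of $\tilde G_\pm(0)$ for the semisimple zero eigenvalue are supported precisely in that same component; hence they annihilate the source, and the slow-block ($j\in\{e,h,g\}$) components of the MZ-conjugated source are $O(\rho)\,|g|_{L^1}$ rather than $O(1)\,|g|_{L^1}$. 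Re-running the proof of Proposition~\ref{prop:dec2}, Lemma~\ref{lem:estW} then supplies, on these slow blocks, estimates that gain a full power of $\rho$ — exactly enough to cancel the $\kappa_j^{-1/2}=O(\rho^{-1})$ weights responsible for the loss in Proposition~\ref{prop:dec2} — while the fast block $P$ carries no loss to begin with.

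\textbf{The boundary coupling and conservative structure.} The one place this does not close by itself is the coupling through the degenerate Kreiss condition of Lemma~\ref{lem:estk}: the degeneracy \eqref{degkreiss} along the translational zero mode (Remark~\ref{rem:phi}) threatens to reintroduce a factor $\rho^{-1}$ in the zero-mode coefficient $\mathcal W_{P-}^{\parallel}(0)$, even though the $P$-block of the source itself is harmless. This is controlled using the conservative form of the underlying system \eqref{inv} (equivalently, of the profile equation \eqref{tw}): at $\rho=0$ one has $s(\bar u,0)\equiv 0$, so the adjoint zero mode $\tilde\psi$ of \eqref{flt} is \emph{constant}; consequently, in the Lyapunov--Schmidt expression for $\mathcal W_{P-}^{\parallel}(0)$ — schematically $(\rho\,\partial_\rho D(0,\hat\zeta))^{-1}\int\tilde\psi\cdot(\mathrm{source})$, with $\partial_\rho D(0,\hat\zeta)\neq 0$ by (S7) — the integration by parts $\int\tilde\psi\cdot g_x=-\int\tilde\psi_x\cdot g$, together with $\tilde\psi_x=O(\rho)$, furnishes the missing factor of $\rho$. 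This is the same cancellation used for conservative source terms in \cite{Z04,Z07}. Collecting all blocks then yields $|\mathcal V|_{L^p}\le C\beta|g|_{L^1}$, hence $|\hat u|_{L^p}\le C\beta|g|_{L^1}$.

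\textbf{Main obstacle.} I expect the crux to be making this last cancellation uniform, i.e.\ transporting the ``adjoint zero mode is constant'' identity robustly through the $x$-dependent MZ-conjugator and block transformations (which become constant only exponentially fast, by Lemma~\ref{lem:con}(i)), so that the commutator terms produced when $\partial_x$ is moved past these transformations in the integration by parts do not destroy the gain, and establishing the quantitative bound $\tilde\psi_x(\cdot,\rho)=O(\rho)$ with enough spatial localization — obtained by differentiating the adjoint equation in $\rho$ at $\rho=0$ and using the exponential convergence \eqref{asym}. The glancing and $\alpha,\beta$ bookkeeping inherited from Proposition~\ref{prop:dec} carries over unchanged, being insensitive to the form of the source.
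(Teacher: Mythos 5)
Part (i) of your proposal is correct and is exactly the paper's argument: fold into \eqref{bvp} with $\hat F=(0,\hat f)$, apply Proposition~\ref{prop:dec2}, and recover $\hat u,\hat u_x$ from $\mathcal V$ using the uniform invertibility of $B^{11}(\bar u)$.

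For part (ii), however, there is a genuine gap. A repairable slip first: with $\hat v=B^{11}\hat u_x+S\hat u-g$ the source $((B^{11})^{-1}g,0)$ sits in the $\hat u$-component, not the flux component, while the left zero-eigenvectors of $\tilde G_\pm(0)$ are of the form $(0,\ell)$ (the condition $\ell_1(B^{11}_\pm)^{-1}=0$ forces $\ell_1=0$); your statement that source and left zero-modes lie in the \emph{same} component ``hence they annihilate'' is self-contradictory as written -- annihilation holds precisely because they lie in complementary components -- but this can be fixed. The real problem is the one you flag yourself: the fast block carries no $\rho$-gain (the source at $\rho=0$ lies exactly in the fast subspace $\{(u,0)\}$), so $|\mW_{P+}|_{L^\infty}\sim|g|_{L^1}$, and the degenerate Kreiss estimate \eqref{degkreiss}/Lemma~\ref{lem:estk} only gives $\rho|\mW_{P-}(0)|\lesssim|\mW_+|_{L^\infty}$, reintroducing the $\rho^{-1}$ loss. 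Your proposed cure -- a Lyapunov--Schmidt/duality formula of the type $(\rho\,\partial_\rho D)^{-1}\int\tilde\psi\cdot(\mathrm{source})$ with the adjoint zero mode constant at $\rho=0$ and integration by parts against $g_x$ -- is morally the conservative-source cancellation of \cite{Z01,Z04,Z07}, but no such representation of the boundary value $\mW_{P-}(0)$ (nor the refined, rank-one version of \eqref{degkreiss} it presupposes) is available in the Kreiss-symmetrizer framework of Section~\ref{s:4}; setting it up uniformly through the $x$-dependent conjugator and block transformations would amount to rebuilding a Green-function/pointwise machinery, and you explicitly leave this as the unresolved crux. As it stands, the proof of (ii) does not close.

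The paper instead uses a much shorter device, Kreiss's auxiliary problem (following \cite{KK98,GMWZ05,N09}): solve $B^{11}(\bar u)\hat w_x-A^1(\bar u)\hat w=g$, which by (S3) and \cite[Lemma 12.6]{GMWZ05} admits a solution with $|\hat w|_{L^p}\le C|g|_{L^1}$ ($1\le p\le\infty$) and hence $|\hat w_x|_{L^1}\le C|g|_{L^1}$; then $\tilde u=\hat u-\hat w$ solves \eqref{flt} with source $-((S+\tilde A^1)\hat w)_x+s\hat w$, which is $O(\rho)(|\hat w|+|\hat w_x|)$ in $L^1$ since $S+\tilde A^1=iB_{12}(\eta)+\lambda C^1=O(\rho)$ and $s=O(\rho)$, so part (i) applied to $\tilde u$ gives $|\tilde u|_{W^{1,p}}\le C\beta|g|_{L^1}$ and the claim follows. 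Because the subtraction is performed at the level of \eqref{flt} itself, the block decomposition, the degenerate boundary condition, and any adjoint-mode cancellation never need to be revisited; this is the argument you should adopt.
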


\begin{proof}
	Assertion (i) follows in a very straightforward manner. For a solution $\hat u$ to \eqref{flt}, $\hat V=(\hat u,B^{11}\hat u_x)$ solves \eqref{lin2} with $\hat F=(0,\hat f)$, and $\mathcal V=(V_+,V_-)$ solves \eqref{bvp} for $\mF=(F_+,-F_-)$. Thus, Proposition \ref{prop:dec2} gives
	\begin{equation} 
		\label{V1}
		|\hat u|_{W^{1,p}} \le C|V|_{L^p} =C|\mathcal V|_{L^p} \le C\rho^{-1}\beta|\mF|_{L^1} =C\rho^{-1}\beta|\hat f|_{L^1}.
	\end{equation}
 (ii) To prove this assertion, we follow the idea of Kreiss \cite{KK98} (cf. also \cite{GMWZ05,N09}),  and consider the auxiliary problem 
 \begin{equation}
 	\label{aux}
 	B^{11}(\bar{u})\hat w_x-A^1(\bar{u})\hat w=\hat g.
 \end{equation}
 As the stable and unstable spaces of $B^{11}(\bar u)^{-1}A^1(\bar u)$ are separated due to condition (S3),  \cite[Lemma 12.6]{GMWZ05} shows that there exists a solution $\hat w$ to \eqref{aux}  satisfying
 $$|\hat w|_{L^p} \le C|g|_{L^1}$$
 for $1 \le p \le \infty$, and we can deduce directly from the equation that
 $$|\hat w_x|_{L^1} \le C(|\hat w|_{L^1}+|\hat g|_{L^1}) \le C|\hat g|_{L^1}.$$
 Now, let $\hat u$ be a solution of \eqref{flt} with $\hat f$ replaced by $\hat g_x$. Then $ \tilde u=\hat u-\hat w$
 satisfies
 \begin{equation} 
 	\label{flt3}
 	\begin{aligned} 
 		&(B^{11}\tilde u_{x})_x+(S(\lambda,\eta) \tilde u)_{x}-s(\lambda,\eta)\tilde u\\
 		&\quad=-((S(\eta,\lambda)+\tilde A^1)\hat w)_x+s(\lambda,\eta)\hat w=:\zeta(\eta,\lambda,w,w_x),
 	\end{aligned}
 \end{equation}
and (i)  gives
$$
	|\tilde u|_{W^{1,p}} \le C\beta\rho|\zeta|_{L^1}.
$$
By definition, for small bounded $\rho$,
$$
 |\zeta(\eta,\lambda,w,w_x)|=|\zeta(\rho,\hat \eta,\hat \lambda, w,w_x)| \le C\rho (|w|+|w_x|) \le C\rho|g|_{L^1},
 $$
 and we conclude
 $$|\hat u|_{L^p} \le |\hat w|_{L^p}+|\tilde u|_{L^p} \le C \beta |\hat g|_{L^1}.$$
\end{proof}

From this, we obtain the estimates for solutions to $(\lambda-L_\eta)\hat U=\hat F$, i.e. $\hat U=(\lambda-L_\eta)^{-1}\hat F$ claimed in Proposition \ref{prop:resolvent} (i) as follows.

\begin{proof}[Proof of Proposition \ref{prop:resolvent} (i)]
	Let $\hat U=(\hat U^1,\hat U^2)$ be a solution to \eqref{flt2}. As seen in Remark \ref{rem:hatU}, $\hat U^1$ satisfies \eqref{flt} for
	$$\hat f=(L_{\eta}^{22}-\lambda I)\mathcal A \hat F^1-\hat F^2.$$
	We compute
	$$L^{22}_{\eta}F^1=(C^1\mathcal A^{-1}\hat F^1)_x-\tilde A^0 \mathcal A^{-1}\hat F^1.$$
	Thus for bounded $(\lambda, \eta)$, we find by Corollary \ref{coro:aux} (i)
	\begin{equation}
		\label{res100}
	|\hat U^1|_{L^p} \le C\rho^{-1}\beta|\hat F^1|_{L^1} \le C\rho^{-1}\beta(|\hat F^1|_{W^{1,1}}+|\hat F^2|_{L^1}).
	\end{equation}
	On the other hand, $\hat U^2$ satisfies \eqref{flt} with
	$$\hat f=L^{21}_\eta F^1+\lambda F^2.$$
	By definition,
	$$L^{21}_\eta \hat F^1=(B^{11}\hat F^1_x)_x+((iB_{12}(\eta)-A^1)\hat F^1)_x-(B_{22}(\eta)- i\tilde A_2(\eta))\hat F^1=:\zeta^1_x+\zeta$$
	By Corollary \eqref{coro:aux} (i), (ii) and linearity of the equations, we get
	\begin{equation}
		\label{res200} 
		|\hat U^2|_{L^p} \le C\beta(|\zeta^1|_{L^1}+|\zeta|_{L^1}+|\hat F^2|_{L^1}) \le C\beta\rho^{-1}(|\hat F^{1}|_{W^{1,1}}+|\hat F^2|_{L^1}).
	\end{equation}
	Adding \eqref{res100} and \eqref{res200} gives \eqref{sfre}. Estimate \eqref{sfre2} follows analogously, with $\hat F$ replaced by $\hat F_x$ and application of Corollary \eqref{coro:aux} (ii).
\end{proof}

\subsection{Intermediate frequencies.} Fix $0<\delta<R$ and a base point $(\underline \rho,\underline{\hat \zeta})$ with
$$\delta \le \rho \le R, \quad |\underline{ \hat\gamma}| \ge -\frac12c\delta(|\underline{\hat \tau}|^2+|\underline{\hat \eta}|^2),$$
for $c>0$ sufficiently small. Note that this in particular implies $(\rho,\underline{\hat \zeta}) \in S^{d}_c$. In this case, the result is more or less a much easier version of the one for low frequencies.
First, Corollary \ref{coro:spec2} directly yields a basis transformation $\mT \in C^\infty(\overline{\Omega_0},\C^{4n \times 4n})$, $\Omega_0$ being a $S^d_c$-neighbourhood of $(\underline \rho,\underline{\hat \zeta})$, such that
$$\mT^{-1}G(\infty)\mT \le \begin{pmatrix}
	P_+ & 0\\ 0& P_-
\end{pmatrix}~~\text{on}~~\Omega_0,$$
for smooth matrix families $P_{\pm} \in C^{\infty}(\overline{\Omega_0},\C^{2n \times 2n})$ with
$$\Rep P_+ \ge CI,\quad \Rep P_-\le -CI.$$  
Again, for a solution $\mathcal V$ of \eqref{bvp}, the function  $\mW=\tilde{\mT}^{-1}\mathcal V$, $\tilde{\mT}=K\mT$,   satisfies
\begin{align}
	\label{mid1}
	\partial_x\mW_{P+}-P_{+}\mW_{P\pm}&=\bar{\mF}_{P+},\\
	\label{mid2}
	\Gamma_2 \mW&=0.
\end{align}

For a solution $\mW$, with $\mW(+\infty)=0$, we multiply \eqref{mid1} by $\pm \mW$ and integrate from $0$ to $\infty$. This time, by estimating the right hand side as
$$|\bar{\mF} \mW|_{L^1} \le \eps|\mW|_{L^2}^2+C_\eps|\bar{\mF}|_{L^2},$$
we find for $\eps$ sufficiently small,
\begin{align} 
	\label{mid3}
	|\mW_{P+}|_{L^2} ^2&\le C|\bar{\mF}_{P+}|^2_{L^2},\\
	\label{mid 4}
	|\mW_{P-}|_{L^2}^2 &\le C(|\mW_{P-}(0)|+|\bar{\mF}_{P-}|^2_{L^2}).
\end{align}
Since $\tilde{\mT}$ and its inverse are uniformly bounded, we derive immediately from Lemma \ref{lem:ef} (ii) that for $\mathcal V \in \mE_{P-}$,
$$|\Gamma_2\mW_{P-}| = |\Gamma \mathcal V| \ge C |\mathcal V| \ge C|\mW_{P-}|.$$
We can thus proceed as in the last section, namely multiply \eqref{mid3} by a sufficiently large constant and add \eqref{mid 4} to find 
$$|\mW|_{L^2}^2 \le C |\bar{\mF}|_{L^2}^2,$$
which, due to the boundedness of $\tilde{\mT}$, is equivalent to
$$|\mathcal V|_{L^2} \le C|\mF|_{L^2}.$$
By definition of the boundary value problem, this is equivalent to
$|(\hat u,\hat u_x)|_{L^2} \le |\hat f|_{L^2},$ for a solution $\hat u$ to \eqref{flt}. The proof of Proposition \ref{prop:resolvent} (ii) is now an easy consequence of Remark \ref{rem:hatU}.

\begin{proof}[Proof of Proposition \ref{prop:resolvent} (ii)]
	Let $\hat U=(\hat U^1,\hat U^2)$ satisfy \eqref{flt2}. By Remark \ref{rem:hatU},
	$\hat U^1$ solves \eqref{flt} with $\hat f$ satisfying
	$$|\hat f| \le |\hat F^1_x|+|\hat F^1|+|\hat F^2|$$
	for $|(\lambda,\eta)|$ bounded,
	and we directly conclude from the argumentation above
	$$|\hat U^1|_{H^1} \le C|\hat f|_{L^2} \le C|\hat F^1|_{H^1}+|\hat F^2|_{L^2}.$$
	Since we are now only interested in $H^s-H^s$ estimates, we can just differentiate \eqref{flt} and directly see that $\partial_x \hat u$ solves \eqref{flt} for a right hand side $\hat f=g(x,u,u_x)$ satisfying
	$$|g(u,u_x,x)| \le C(|(1+\partial_x+\partial_x^2)\hat F^1|+|(1+\partial_x)\hat F^2|+|(1+\partial_x \hat U^1)|.$$
	We obtain
	$$|\hat U^1|_{H^2} \le C|\hat F|_{H^2 \times H^1}.$$
	Inductively, we conclude
	$$|\hat U^1|_{H^k} \le C|\hat F|_{H^{k}\times H^{k-1}}.$$
	for all $k\ge 1$.
	Observing that
	$$\hat U^2=\mathcal A(\lambda \hat U^1+\hat F^1)$$
	 finishes the proof.
\end{proof}

\subsection{Large frequencies.}

It turns out that for large frequencies, it is more convenient to work with the operator $\tilde L=\tilde L_0=T L_0 T^{-1}$ defined in \eqref{tl}. Since $T$ commutes with $\chi_R(\lambda,D)$, \eqref{lfre} is equivalent to
\begin{equation}
	\label{lfre2}
	\|(\lambda-\tilde L)^{-1}\chi_R(\lambda, D)F\|_{s} \le C\|F\|_{s}
\end{equation}
for all $s\in \N$, $F \in H^s$. The crucial result is the following energy estimate.

\begin{lemma}
	\label{lem:res1}
	There exist $\delta_1$ such that for
	$$|\bar u-u_-|_{W^{2,\infty}} <\delta_1$$
	the following holds: For all $s \in \N$, there exists $C=C(s), c=c(s)>0$ such that all solutions $V$ of 
	\begin{equation} 
		\label{semi3}
		(\lambda-\tilde L)V=F, ~~\lambda \in \C,
	\end{equation} 
	satisfy
	\begin{equation} 
		\label{res1}(\Rep \lambda+c)\|V\|^2_{s} \le C(\|F\|_{s}^2+\|V\|^2).
	\end{equation}
\end{lemma}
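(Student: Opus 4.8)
The statement is a paradifferential ``nonlinear damping'' (Kawashima-type) estimate, and the plan is to follow the strategy of \cite{S24,Z24}. Write $a(x,\xi)$ for the full matrix symbol of the operator $\tilde L$ of \eqref{tl}: its principal part is $\tilde{\mathbb L}(\bar u(x),\xi)$, and its lower-order part carries the coefficients $\tilde A^0,\tilde A^j$ and all terms involving $\bar u'$. A Schur-complement computation identifies the eigenvalues of $a(x,\xi)$ at $x=\pm\infty$ with the roots $\lambda$ of the dispersion relation \eqref{disp} at $u_\pm$, so by (S4) and \eqref{rep} these have $\Rep\le -c\kappa(|\xi|)$; since $\|\bar u-u_-\|_{W^{2,\infty}}<\delta_1$, the coefficients of \eqref{disp} remain in an arbitrarily small neighbourhood of those at $u_-$, so the same spectral gap holds uniformly in $x$ along the profile, and in particular $\Rep(\mathrm{spec}\,a(x,\xi))\le -c'<0$ for $|\xi|\ge M$ with $M$ large. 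The goal is to build a paradifferential operator $R_s$ of order $2s$, essentially self-adjoint and frequency-localized to $|\xi|\gtrsim M$, such that (a) $\Rep\langle R_sV,V\rangle\ge c_0\|V\|_s^2-C\|V\|^2$ and (b) $\Rep\langle \tilde LV,R_sV\rangle\le -\tilde c\|V\|_s^2+C\|V\|^2$ for all $V$ in the domain. Granting (a)--(b), take the real part of $\langle(\lambda-\tilde L)V,R_sV\rangle=\langle F,R_sV\rangle$; since $R_s=R_s^*$ the first term equals $(\Rep\lambda)\Rep\langle R_sV,V\rangle$, whence
\[
(\Rep\lambda)\,\Rep\langle R_sV,V\rangle+\tilde c\,\|V\|_s^2-C\|V\|^2\le \Rep\langle F,R_sV\rangle\le \tfrac{\tilde c}{2}\|V\|_s^2+C\|F\|_s^2 .
\]
Using (a) when $\Rep\lambda\ge0$ and $|\langle R_sV,V\rangle|\le C\|V\|_s^2$ when $-c<\Rep\lambda<0$, one obtains $(\Rep\lambda+c)\|V\|_s^2\le C\big(\|F\|_s^2+(1+|\Rep\lambda|)\|V\|^2\big)$ for a sufficiently small $c>0$. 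For $\Rep\lambda\le -c$ the asserted \eqref{res1} is trivial, and for $\Rep\lambda$ large it follows from $\|(\lambda-\tilde L_s)^{-1}\|_s\le C_s(\Rep\lambda-\gamma_s)^{-1}$ (Corollary \ref{coro:semi}); thus we may restrict to $-c<\Rep\lambda\le C_1$, where $1+|\Rep\lambda|$ is bounded, and \eqref{res1} follows.

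For the construction of $R_s$, by (H$_B$) there is a symbolic symmetrizer $\mathcal S(\bar u(x),\omega)$ for $i\mathcal B(\bar u(x),\omega)$, hence --- via the conjugation displayed before Proposition \ref{semi2}, and after multiplication by $\lxi^{2s}$ --- a Hermitian symbol $R_0(x,\xi)$ of order $2s$ with $\Rep R_0\ge c_0\lxi^{2s}I$, for which the symmetrizer property forces the leading (order $2s+1$) symbol of $\Rep(R_0 a)$ to vanish, so that $\Rep(R_0 a)$ is only of order $2s$. This principal symmetrizer alone yields merely $\Rep\langle\tilde LV,R_{0,s}V\rangle\lesssim\|V\|_s^2$, \emph{not} strict negativity. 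The missing strict damping is supplied by the dissipativity conditions (D1)--(D2), which encode precisely how the lower-order part of $a$ sits relative to $R_0$: a Kawashima/compensating-function construction at the symbol level (as in \cite{S24}) produces a bounded Hermitian correction $R_1(x,\xi)$ of order $2s-1$ such that $R:=R_0+R_1$ satisfies, uniformly in $x$ for $|\xi|\ge M$,
\[
\Rep\big(R(x,\xi)\,(-a(x,\xi))\big)\ge c\,\kappa(|\xi|)\,\lxi^{2s}I\ge \tfrac{c}{2}\,\lxi^{2s}I .
\]
The essential point is that this uses only the spectral and symmetrizer information in (D), not any simultaneous symmetrizability of the matrices $A^j,B^{jk}$ --- which indeed fails for the relativistic models. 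One sets $R_s:=\Op^{\mathrm{BW}}(R)$, smoothly truncated to $|\xi|\gtrsim M$.

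Properties (a)--(b) then follow from these symbol bounds via the paradifferential calculus of Appendix \ref{s:paradiff}. G\aa rding's inequality gives (a). For (b), write $2\Rep\langle\tilde LV,R_sV\rangle=\langle(R_s\tilde L+\tilde L^*R_s)V,V\rangle$; replacing $\tilde L$ by $\Op^{\mathrm{BW}}(a)$ costs a para-linearization remainder which --- crucially, thanks to the paradifferential framework --- requires only $\bar u\in W^{2,\infty}$, independently of $s$, and comes with a factor $O(\|\bar u-u_-\|_{W^{2,\infty}})=O(\delta_1)$; then the composition and adjoint formulas give $R_s\Op^{\mathrm{BW}}(a)+\Op^{\mathrm{BW}}(a)^*R_s=\Op^{\mathrm{BW}}(Ra+a^*R)+E$ with $E$ of order $2s$ --- the gain of one order being the heart of the matter --- and $Ra+a^*R=2\Rep(Ra)\le -c\,\kappa(|\xi|)\lxi^{2s}I$ on $|\xi|\ge M$. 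The sharp G\aa rding inequality then yields
\[
2\Rep\langle\tilde LV,R_sV\rangle\le -c\,\big\langle\kappa(|D|)\Lambda^{2s}V,V\big\rangle+(C\delta_1+\varepsilon)\|V\|_s^2+C_\varepsilon\|V\|^2 ,
\]
and since $\langle\kappa(|D|)\Lambda^{2s}V,V\rangle=\|V\|_s^2-\|V\|_{s-1}^2$, $\|V\|_{s-1}^2\le\varepsilon\|V\|_s^2+C_\varepsilon\|V\|^2$, and the frequencies $|\xi|\lesssim M$ (where $R$ is bounded) contribute only $O(\|V\|^2)$, this gives (b) once $\delta_1$ and $\varepsilon$ are small enough.

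The main obstacle is exactly step (b): one must check that every quantization, composition, and para-linearization error genuinely lands at order $2s$ --- one below the naive $2s+1$ --- so that the top-order damping $-\tilde c\|V\|_s^2$ survives, and that the \emph{only} obstruction to the principal symmetrizer $R_0$ already producing strict negativity is the lower-order structure controlled by (D), repaired by $R_1$ purely at the symbol level. This manoeuvre --- trading the usual algebraic ``Kawashima-type'' coupling hypotheses for symbolic information from (D), at the cost of only $W^{2,\infty}$-control of the profile independently of $s$ --- is the technical core of the argument and is carried out in detail following \cite{S24}.
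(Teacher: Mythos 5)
Your proposal is correct and follows essentially the same route as the paper: there the lemma is proved by repeating the proof of Proposition \ref{prop:central1} with $V_t$ replaced by $\lambda V$ and $u$ by $\bar u$, i.e.\ by pairing $\partial_x^\alpha$-derivatives of the resolvent equation with the order-zero self-adjoint paradifferential symmetrizer $K(\bar u)$ of Lemma \ref{lem:kv}, whose construction --- exactly as in your sketch --- uses only the symbolic symmetrizer from (H$_B$), the dissipativity condition (D) at the nearby stable endstate, and $W^{2,\infty}$-smallness of $\bar u-u_-$, with the core symbol construction deferred to \cite{S24}. Your repackaging of the same symbol information into a single order-$2s$ symmetrizer handled by sharp G\aa{}rding, together with the separate treatment of large $\Rep\lambda$ via the semigroup bound, is only a cosmetic variant of that argument.
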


\begin{proof}
The proof is indeed a simpler version of the one for the nonlinear damping estimate Proposition \ref{prop:central1} in Section \ref{s:5}. We just replace $V_t$ by $\lambda V$ and $u$ by $\bar{u}$.
\end{proof}

Now, the main resolvent estimate follows in a straightforward manner  (cf. \cite{KwZ09,Kw11}).

\begin{proof}[Proof of Proposition \ref{prop:resolvent} (iii)]
	Take the  $L^2$-scalar product  of \eqref{semi3} with $ V$  to find for all $\eps>0$
   	\begin{equation}
    	\label{res2}
    	|\operatorname{Im}\lambda| \| V\|^2\le \eps \|V\|_{1}^2+C_{ \eps}\|V\|^2+C\|F\|^2.
    \end{equation}
	  Adding \eqref{res1} and \eqref{res2}, gives for $\Rep \lambda >-\theta$, $\theta$ sufficiently small,
	 \begin{equation}
	 	\label{lf2}
	 	|\lambda|\|V\|^2 +\|V\|_s ^2 \le C\|F \|_{s}^2+C_0\|V\|^2
	 \end{equation}
	 Since $\chi_R(\lambda,D)$ commutes with  $(\lambda-\tilde L)$, we can replace $V$ by $\chi_R(\lambda,D)V$ in \eqref{lf2}. By Plancherel's identity, 
	 $$ 
	 	\|\chi_R(\lambda,D)V\|_s \ge \|(1+|\eta|)\chi_R(\lambda,\eta)\mF_yV\|^2.
	 $$
 	Thus, for $R$ sufficiently large,
 	\begin{align*} 
 	|\lambda|\|V\|^2 +\|V\|_s^2 &\ge \|(1+|\lambda|^{\frac12}+|\eta|)\chi_R(\lambda,\eta)\mF_yV\|\\
 	& \ge 2C_0\|\chi_R(\lambda,\eta)\mF_yV\|^2= 2C_0\|\chi_R(\lambda,D)V\|^2,
 	\end{align*}
	and Proposition \ref{prop:resolvent} (iii) follows from \eqref{lf2}.
\end{proof}

\section{Non-linear stability}\label{s:5}

In this section, we show the main theorem. 

\begin{theo}
	\label{the:main}
	Let  $u(x,t)=\bar u(x_1)$ be a planar shock solution to \eqref{hyp-hyp} connecting endstates $u_-$ and $u_+$ satisfying (S1)-(S7). Then for $d \ge 2$, $s \ge s_0+2$, $(s_0= [d/2]+1)$, there exist $\delta_1,\delta_2>0$ such that, if 
	$$\|\bar u-u_-\|_{W^{2,\infty}} <\delta_1,$$
	for all $(\phi,\psi) \in H^{s} \times H^{s+1} \cap (L^1)^2$ with
	$$\|\phi\|_{s+1}+\|\psi\|_{s}+\|(\phi,\psi)\|_{W^{1,1}\times L^1} < \delta_2,$$
	there exists a unique global solution $u \in C([0,\infty),H^{s+1}) \cap C^1([0,\infty),H^{s})$ to \eqref{hyp-hyp} with $(u(0),u_t(0))=(\phi,\psi)$, and for $t \ge 0$, $2 \le p \le \infty$,
	\begin{equation}\label{decayfinal}
		\begin{aligned}
			\|u(t) - \bar u\|_{s+1}+\|u_t(t)\|_s &\leq C(1+t)^{-\frac{d-1}{4}}(\|\phi\|_{s+1}+\|\psi\|_{s}+\|(\phi,\psi)\|_{W^{1,1}\times L^1}),\\
			\|(u(t) - \bar u,u_t(t))\|_{L^p} &\leq C(1+t)^{-\frac{d-1}{2}(1-1/p)}(\|\phi\|_{s+1}+\|\psi\|_{s}+\|(\phi,\psi)\|_{W^{1,1}\times L^1}),
		\end{aligned}
	\end{equation}
\end{theo}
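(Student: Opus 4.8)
The plan is to run the nonlinear-iteration scheme of \cite{Z04,Z07} in the Kreiss-symmetrizer form of \cite{GMWZ05}, the one genuinely new ingredient being a quasilinear damping estimate obtained by the paradifferential calculus of Appendix \ref{s:paradiff}. I would work with the perturbation variable $U=(v,\mathcal A(\bar u)v_t)$, $v=u-\bar u$; by \eqref{fo} it solves $U_t-LU=\mathcal N(U)$, $U(0)=U_0=(\phi-\bar u,\mathcal A(\bar u)\psi)$, where $L$ is the linearized operator of Section \ref{s:3}. The crucial structural point — inherited from the conservation form of both \eqref{hyp-hyp} and the profile equation \eqref{tw} — is that $\mathcal N(U)$, which is smooth and at least quadratic in $U$ and its first and second spatial derivatives, can be written $\mathcal N=F_1+\partial_xF_2+\partial_tF_3$ with $F_1$ supported near the shock layer, $F_2,F_3$ quadratic, and all three satisfying the source hypotheses and the bilinear bounds (e.g. $\|F_j\|_{W^{1,1}\cap L^1}\lesssim\|U\|_0\|U\|_{s_0+2,s_0+1}$ and high-Sobolev analogues $\lesssim\|U\|_{s_0+2,s_0+1}\|U\|_{s+1,s}$) needed to invoke Propositions \ref{prop:dl1}, \ref{prop:dect} and Corollary \ref{decacy:lin}. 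Local existence in $H^{s+1}\times H^s$ and the continuation criterion come from Proposition \ref{prop:wp}.

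\textbf{Step 1 (nonlinear damping).} The core step is to establish the energy estimate of this section, Proposition \ref{prop:central1} (of which Lemma \ref{lem:res1} is the Laplace-transformed case): there are $\theta,C>0$ and, for each $s\ge s_0+2$, a norm $\mathcal E_s(t)\sim\|U(t)\|_{s+1,s}^2$ with
$$\tfrac{d}{dt}\mathcal E_s(t)\le-\theta\|U(t)\|_{s+1,s}^2+C\|U(t)\|_0^2$$
as long as $\|(v,v_t)\|_{W^{2,\infty}}$ is small; integrating, $\|U(t)\|_{s+1,s}^2\le Ce^{-\theta t}\|U_0\|_{s+1,s}^2+C\int_0^te^{-\theta(t-\tau)}\|U(\tau)\|_0^2\,d\tau$. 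Following \cite{S24} (and the heuristic of \cite{Z24}), this is obtained by paralinearizing the second-order system \eqref{fo}, regularizing the coefficients, and exploiting that every frozen state $\bar u(x_1)$ along the profile is stable in the sense of (D): by (S4) this holds at the endstates, and small amplitude plus continuity propagates it along $\bar u$, so the frozen paradifferential symbols are uniformly dissipative on the glancing eigenspaces of the first-order part — through (D1), (S6) and the block reductions of Section \ref{s:4} — and of elliptic/hyperbolic type on the second-order part through (D2); no simultaneous symmetrizability or Kawashima structure is needed.

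\textbf{Step 2 (closing the iteration).} Put $\varepsilon:=\|\phi\|_{s+1}+\|\psi\|_s+\|(\phi,\psi)\|_{W^{1,1}\times L^1}$ and
$$\zeta(t):=\sup_{0\le\tau\le t}\Big[(1+\tau)^{\frac{d-1}{4}}\|U(\tau)\|_{s+1,s}+(1+\tau)^{\frac{d-1}{2}}\|U(\tau)\|_{L^\infty}\Big].$$
Duhamel's formula $U(t)=e^{Lt}U_0+\int_0^te^{L(t-\tau)}\mathcal N(U(\tau))\,d\tau$, the linearized bounds of Propositions \ref{prop:dl1}, \ref{prop:dect} (the latter supplying the extra $(1+t-\tau)^{-1/2}$ for the space- and space-time-conservative parts $\partial_xF_2$, $\partial_tF_3$) and Corollary \ref{decacy:lin}, together with the bilinear bounds $\|\mathcal N(U(\tau))\|\lesssim\zeta(\tau)^2(1+\tau)^{-(d-1)/2}$ in the norms entering those statements, yield — via the convolution estimates that hold precisely when $d\ge2$ — both $\|U(t)\|_{L^\infty}\lesssim(1+t)^{-(d-1)/2}(\varepsilon+\zeta(t)^2)$ and $\int_0^te^{-\theta(t-\tau)}\|U(\tau)\|_0^2\,d\tau\lesssim(1+t)^{-(d-1)/2}(\varepsilon+\zeta(t)^2)^2$; feeding the latter into Step 1 gives $\|U(t)\|_{s+1,s}\lesssim(1+t)^{-(d-1)/4}(\varepsilon+\zeta(t)^2)$. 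Hence $\zeta(t)\le C_0(\varepsilon+\zeta(t)^2)$ on the maximal interval of existence, which for $\varepsilon$ small forces $\zeta(t)\le 2C_0\varepsilon$ by a continuous-induction argument (using $\zeta(0)\lesssim\varepsilon$, continuity, and the gap in the quadratic inequality). Since this keeps $\|(v,v_t)\|_{W^{2,\infty}}\lesssim\|U\|_{s_0+2,s_0+1}\lesssim\varepsilon$ inside the regime of Step 1, Proposition \ref{prop:wp} gives a global solution, and $\|U\|_{L^p}\le\|U\|_0^{2/p}\|U\|_{L^\infty}^{1-2/p}$ upgrades the $L^2$/$L^\infty$ bounds to \eqref{decayfinal} for all $2\le p\le\infty$.

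\textbf{Main obstacle.} The hard part is Step 1 — a quasilinear energy/damping estimate for a system possessing neither a symmetrizer compatible with the $B^{jk}$ nor Kawashima coupling. One must paralinearize the full second-order equation \eqref{fo}, control all paradifferential remainders, and construct pointwise along $\bar u(x_1)$ a microlocal symmetrizer from the block structure of (D1)--(D3) and hypothesis (S6), while tracking the $\|\bar u-u_-\|_{W^{2,\infty}}$ smallness sharply enough that off-diagonal and commutator terms remain strictly subordinate to the dissipation. What remains is bookkeeping: keeping $\mathcal N$ in the conservative form demanded by Propositions \ref{prop:dl1}, \ref{prop:dect}, and verifying self-consistency of the $(1+t)^{-(d-1)/4}$ rate — exactly where $d\ge2$ enters, and where, as noted in Section \ref{s:disc}, $d=1$ would additionally require a shock-front (phase) decomposition.
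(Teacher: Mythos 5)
Your proposal follows essentially the same route as the paper's proof: the nonlinear damping estimate of Proposition \ref{prop:central1} obtained from a paradifferential symmetrizer exploiting stability of the nearby constant states (Lemma \ref{lem:kv}, following \cite{S24}), combined with Duhamel, the linearized decay bounds of Propositions \ref{prop:dl1}, \ref{prop:dect} and Corollary \ref{decacy:lin} applied to the divergence-form quadratic source, and a continuous-induction closure $\zeta\le C(\varepsilon+\zeta^2)$, with $d\ge 2$ entering exactly where you indicate and $L^p$ decay by interpolation. The only cosmetic deviations are that the paper's $\zeta$ tracks the $L^2$ and $L^\infty$ norms rather than $H^s$ and $L^\infty$, its source is written in pure divergence form $\partial_xQ^1+\partial_tQ^2$ (no separate $F_1$), and the damping symmetrizer rests on condition (D) via the construction of \cite{S24} rather than on (S6) and the block reductions of Section \ref{s:4}, which enter only the low-frequency resolvent analysis.
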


First, we write the quasilinear equations \eqref{hypreg} as a first-order in time system 
\begin{equation}
\label{non-lin}	
U_t-L(u)U=0.
\end{equation}
Here, we use variables $U=(\Lambda u,u_t)$, instead of $(\Lambda u,\mathcal A u_t)$ as  in the linear case, and get $L(u)=\mL(u)+\mathfrak{l}(u)$, where
\begin{align*}
	\mL(u)&=\begin{pmatrix}
		0 & \Lambda I\\
		\mathcal A^{-1}\big(\sum_{j,k=1}^dB^{jk}\partial_{x_jx_k}-\sum_{j=1}^d A^j\partial_{x_j}\big)\Lambda^{-1} & \mathcal A^{-1}\big(\sum_{j=1}^d C^j\partial_{x_j}-A^0\big)
	\end{pmatrix},\\
	\mathfrak l(u)&=\begin{pmatrix}
		0 & 0\\
		\mathcal A^{-1}\sum_{j=1}^d\big(\sum_{k=1}^d  (B^{jk})_{x_k}+(C^j_1)_t\big)\partial_{x_j}\Lambda^{-1} & \mathcal A^{-1} \sum_{j=1}^d (C^j_0){x_j}
	\end{pmatrix}.
\end{align*}

Throughout this section we make the assumption
\begin{equation}
	\label{smallpsi}
	|\bar{u}-u_-|_{W^{2,\infty}} < \delta_1,
\end{equation}
for $\delta_1$ to be chosen sufficiently small in each step. For better readability,  we will also assume $u_-=0$ w.l.o.g.. Otherwise, we could consider $\bar{u}-u_-$ and coefficients $A^j(u+u_-)$ etc.. Next, let $u$ be a solution of \eqref{hypreg}, such that $u(0,\cdot)=\phi$, $u_t(0,\cdot)=\psi$ are small perturbations of $\bar u$, i.e.
\begin{equation} 
	\label{smallv}
	\|\phi-\bar u\|_{s+1}+\|\psi\|_{s} \le \delta_2,~~s \ge [d/2]+2,
\end{equation}
for $\delta_2$ sufficiently small.
 \eqref{smallpsi} (for $\bar u_-=0$), \eqref{smallv} and Sobolev-embedding imply that there exists $\delta>0$ with
\begin{equation}
	\label{smallpsi2}
	\|\phi\|_{W^{2,\infty}} + \|\psi\|_{W^{1,\infty}} < \delta.
\end{equation} 

The nonlinear perturbation $v(t,x)=u(t,x)-\bar u(x_1)$ satisfies the Cauchy problem
\begin{equation}
	\label{hyphyp2}
	\begin{aligned}
		&(\mathcal A(u)v_{t
		})_t-\sum_{j,k=1}^d (B^{jk}(u)v_{x_j})_{x_k}-\sum_{j=1}^d (C^j_0(u) v_t)_{x_j}+(C^j_1(u) v_{x_j})_t\\
	&\quad +A^0(u)v_t+\sum_{j=1}^dA^j(u)v_{x_j} =g(u,D_{x,t}u,\bar{u},\bar{u}',\bar{u}'')\\
		&v(0,\cdot)=v_0,\quad v_t(0,\cdot)=v_1,
	\end{aligned}
\end{equation}
where $v_0=\phi-\bar{u}$, $v_1=\psi$ and 
$$g=\big(C^1_1(u)_t+\sum_{k=2}^d B^{1k}(u)_{x_k}\big)\bar{u}+(B^{11}(u)\bar u')_{x_1}-A^1(u)\bar{u}'.$$ 
Notice that since $\bar u$ solves \eqref{tw},
\begin{align*} 
g&=\big(C^1_1(u)_t+\sum_{k=2}^d B^{1k}(u)_{x_k}\big)\bar{u}+((B^{11}(u)-B^{11}(\bar u))\bar u')_{x_1}-(A^1(u)-A^1(\bar u))\bar{u}'\\
&=O(|v|(|\bar u'|+|\bar u''|)+|D_{x,t}v||\bar u'|).
\end{align*}
In variables $V=(\Lambda v,v_t)$, we find
\begin{equation}
	\label{hyphyp3}
	V_t=\mL(u)V+R(u,D_{x,t}v,D_{x,t}u)+G,
\end{equation}
where
$$G=(0,g)^t,\quad R(v,D_{x,t}v,D_{x,t}u)=(0,\mathcal A^{-1}\sum_{j,k=0}^d (B^{jk}(u))_{x_k}v_{x_j}) =O(|D_{x,t}v||D_{x,t}u|).$$

The crucial part of this section is to show the following damping estimate.

\begin{prop}
    	\label{prop:central1} 
    	For $s\ge s_0+1$, there exist  $\mu,\theta>0$ such that for all $T>0$, each  solution $V\in C([0,T],H^{s}) \Cap C^{1}([0,T],H{s-1})$ to \eqref{hyphyp3}  satisfies
    	$$\|V(t)\|_s^2 \le Ce^{-\theta t}\|V(0)\|_{s}^2+C\int_0^t e^{-\theta(t-\tau)}\|V(\tau)\|^2d\tau,$$
    	provided
    	$$\sup_{t \in [0,T]}\|V(t)\|_s \le \mu.$$
\end{prop}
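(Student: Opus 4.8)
The plan is to carry out a paradifferential version of the ``Kawashima-type'' damping estimate, following the scheme of \cite{S24}, adapted to the variable-coefficient (shock profile) setting. The starting point is equation \eqref{hyphyp3}. First I would pass to paradifferential form: replace the coefficient matrices $\mathcal A(u)$, $B^{jk}(u)$, $C^j(u)$, $A^j(u)$ by their Bony paraproducts $T_{\mathcal A(u)}$, etc., absorbing the commutator errors $(\mathrm{op} - T)$ into remainder terms of lower order, controllable by $\|v\|$ and the smallness \eqref{smallpsi2}. Because the endstate $u_\pm$ are stable in the sense of (D), and $\bar u$ is a small-amplitude profile, the symbol of the linearization about $\bar u(x_1)$ stays in a neighborhood of the stable region uniformly in $x_1$; this is precisely what makes the paradifferential symmetrizer construction go through. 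Concretely, conditions (D1)--(D3) furnish, for each fixed base state along the profile, a symbolic symmetrizer $\mathcal S(u,\omega)$ for $i\mathcal B$ together with the negativity of the restricted quadratic forms $\mathcal W_1$ on the eigenspaces of $\mathcal W_0$; this is the paradifferential analogue of Kawashima's genuine-coupling condition.

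The core step is then to build, at the level of order-$s$ symbols, a symmetrizer $R(u,\xi)$ of size $\langle\xi\rangle^{2s}$ with $\Re\, R \gtrsim \langle\xi\rangle^{2s}$, such that the ``compensating'' quantity $\Re\langle \mathrm{Op}(R) V, (\lambda - \tilde L)V\rangle$ — equivalently, $\frac{d}{dt}\Re\langle \mathrm{Op}(R)V,V\rangle$ along solutions of \eqref{hyphyp3} — controls $\|V\|_s^2$ from below modulo $\|V\|^2$. The construction proceeds by splitting phase space: on the low-frequency region one uses ellipticity of the full operator (the matrix $B(\bar u,\omega)$ is invertible by (S3), so the second-order part dominates); on the high-frequency region one uses the block structure of $\tilde{\mathbb L}$ together with the symmetrizer $\mathcal S$ for $i\mathcal B$, and then adds a skew-symmetric lower-order ``Kawashima compensator'' $\propto i\langle\xi\rangle^{2s-1} K(\omega)$ whose commutator with the principal part produces the missing positive term of order $\langle\xi\rangle^{2s}$ on the kernel of the dissipation — this is where (D2) (negativity of $\mathcal W_1$ on $\ker \mathcal W_0$) enters. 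The paradifferential calculus of Appendix \ref{s:paradiff} is used to control symbol products, adjoints, and commutators with the stated error orders; Gårding's inequality gives the lower bound $\Re\langle \mathrm{Op}(R)V,V\rangle \gtrsim \|V\|_s^2 - C\|V\|^2$. Finally, the remainder terms $R(u,D_{x,t}v,D_{x,t}u) = O(|D_{x,t}v|\,|D_{x,t}u|)$, $G = O(|v|(|\bar u'|+|\bar u''|) + |D_{x,t}v|\,|\bar u'|)$, and the profile-variation terms $\mathfrak l(u)$ are estimated: the quadratic term is handled by the a priori bound $\sup_{[0,T]}\|V\|_s\le \mu$ and a Moser/tame estimate, costing a factor $\mu$ that is absorbed for $\mu$ small; the $G$ and $\mathfrak l$ terms carry exponential spatial weights $|\bar u'|, |\bar u''| \lesssim e^{-\delta|x_1|}$ and contribute $\|V\|_s\|V\|$, absorbed by Young's inequality. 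Assembling, one obtains a differential inequality $\frac{d}{dt}\mathcal{E}(t) \le -\theta \mathcal{E}(t) + C\|V(t)\|^2$ with $\mathcal E(t)\simeq \|V(t)\|_s^2$, and Grönwall gives the claim.

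The main obstacle I anticipate is the construction of the order-$s$ paradifferential symmetrizer in the variable-coefficient case without any structural (simultaneous symmetrizability) assumption: one must show that the symmetrizers $\mathcal S(u,\omega)$ and the compensators $K(\omega)$ coming from (D1)--(D2), which are a priori only defined pointwise in $\omega$ and in the base state, can be glued into a single $C^\infty$ symbol that is smooth in $\xi\neq 0$, has the right homogeneity, and — crucially — is smooth in the spatial variable $x_1$ through the profile $\bar u(x_1)$. Smoothness in $\omega$ near glancing or crossing frequencies is the delicate point; this is why (S2), (S6) (constant multiplicities / semisimplicity of $J_E^*W_1 J_E$) and the small-amplitude hypothesis are invoked, ensuring the eigenprojections of $\mathcal W_0$ depend smoothly on $(\omega,u)$ along the profile. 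A secondary technical nuisance is bookkeeping the many remainder terms — paraproduct commutators, the discrepancy between $\mathrm{op}$ and $T$, and the $\Lambda^{\pm1}$ conjugation implicit in the $V=(\Lambda v, v_t)$ variables — and verifying each is genuinely of lower order so that it is dominated by $\varepsilon\|V\|_s^2 + C_\varepsilon\|V\|^2$; but these are routine given the calculus recalled in Appendix \ref{s:paradiff}.
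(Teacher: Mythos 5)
Your strategy is essentially the paper's: a paradifferential damping estimate in the spirit of \cite{S24}, with the variable coefficients handled purely through smallness of $\|u-u_-\|_{W^{2,\infty}}$ (small-amplitude profile plus small perturbation keeps the state in the stable neighbourhood where condition (D) applies), lower-order treatment of all commutator/remainder/source terms, and a Gronwall argument yielding $\frac{d}{dt}\mathcal E\le-\theta\mathcal E+C\|V\|^2$. The execution differs in two ways. First, the paper does not build an order-$2s$ symmetrizer: it applies $\partial_x^\alpha$, $|\alpha|\le s$, to \eqref{hyphyp3} and uses one fixed self-adjoint order-zero operator $K(u)$ (Lemma \ref{lem:kv}) satisfying $\Rep(K(u)\Op[\mL(u,\xi)]h,h)\le -c\|h\|^2+C\|h\|_{-1}^2$, the construction being imported verbatim from \cite[Section 3]{S24} with symbol seminorms measured in $W^{k,\infty}$ of $u$; your order-$2s$ symbol with G{\aa}rding would work too, but it amounts to redoing that construction. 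Second, do not forget the term coming from differentiating the symmetrizer in time, $\Rep((K(u))_t\partial_x^\alpha V,\partial_x^\alpha V)\le C\|u_t\|_\infty\|\partial_x^\alpha V\|^2$, absorbed because $\|u_t\|_\infty=\|v_t\|_\infty\le C\mu$ by Sobolev embedding ($s\ge s_0+1$); this is the one term your sketch omits.

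Two cautions about your anticipated obstacles, which are misplaced rather than fatal. The glancing set, (S5) and (S6) play no role here: they enter only the low-frequency resolvent analysis of Section \ref{s:4} (Proposition \ref{prop:dec}); the damping estimate needs only condition (D) at $u_\pm$ and $W^{2,\infty}$-smallness, so there is no delicate issue of smoothness of eigenprojections along the profile. Likewise your frequency-regime picture is off: the second-order term $B$ is hyperbolic, not parabolic, so its invertibility (S3) produces no damping and nothing ``dominates'' at low frequency -- low frequencies are simply discarded into the admissible error $C\|V\|^2$ (equivalently the $\|h\|_{-1}^2$ term in Lemma \ref{lem:kv}); and (D2) asserts negativity of $\Rep\mW_1$ on \emph{every} eigenspace of $\mW_0$, not on a ``kernel of the dissipation'': at high frequency \emph{all} of the damping comes from the zeroth-order term restricted to those eigenspaces, which is what the blockwise symmetrizer of \cite{S24} encodes.
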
 

To this end, we need to consider the Fourier-symbol of $\mL(u)$, namely
$$\mL(u,\xi)\mathbb=\begin{pmatrix}
	0 & \lxi I\\
	\mathcal A^{-1}(u)(-B(u,\xi)-iA(u,\xi))\lxi^{-1} & \mathcal A^{-1}(u)(iC(u,\xi)-A^0(u)).
\end{pmatrix}$$

Then, the next result follows by the same argumentation as in \cite[Section 3]{S24} if one estimates norms of adjoints commutators etc. of para-differential operators induced by symbols $(x,\xi) \mapsto F(w(x),\xi)$ by $W^{k,\infty}$-norms (cf. appendix) instead of $H^s$-norms of $w$ (cf. \cite{S24}). 

\begin{lemma}
	\label{lem:kv}
	Let $0$ be a stable rest state of \eqref{hyp-hyp}. Then there exist constants $c_1,c_2, C_1,C_2$ with the following property: For $w \in W^{2,k}$ with
	$$\sup_{t \in [0,T]} \|w(t)\|_{2,\infty}\le c_1,$$
	there exists a self-adjoint operator $K(w)$ of order $0$ with 
	$$1/C_1 \le K(w) \le C_1$$ 
	such that  for all $h \in L^2$,
	$$\Rep ( K(w)\Op[\mL(w,\xi)]h,h) \rangle \le -c_2\|h\|^2+C_2\|h\|^2_{-1}.$$
	Furthermore, if $w \in C([0,T],W^{2,k}) \cap C^1([0,T],L^\infty)$, then $t\mapsto K(w(t))$ is continuously differentiable with
	$$\|\frac{d}{dt}K(w(t))f\| \le C\|w_t\|_{\infty}\|f\|.$$
\end{lemma}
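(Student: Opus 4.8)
This coincides with the para-differential symmetrizer of \cite[Section~3]{S24}, the only change being that the base state $w$ now lies in $W^{2,\infty}$ and is uniformly close to the rest state $0$, so that the para-differential operator norms involved are controlled by $W^{k,\infty}$-norms of $w$ (Appendix~\ref{s:paradiff}) rather than by the $H^s$-norms used there. The plan is to carry over that construction with this single substitution and to verify that the resulting error operators are genuinely small. \emph{Step~1 (the symbol).} For $|\xi|\ge1$, $\omega=\xi/|\xi|$, one has
\[
\mL(u,\xi)=|\xi|\,\mathcal B(u,\omega)+\mathcal R(u,\omega)+O(\lxi^{-1}),
\]
where $\mathcal B(u,\omega)=\begin{pmatrix}0&I\\-\tilde B(u,\omega)&i\tilde C(u,\omega)\end{pmatrix}$ is the matrix of (H$_B$) (equivalently, the one conjugated by $\mathcal S^{1/2}$ to $\mathcal W_0$ in (D2)) and $\mathcal R(u,\omega)=\begin{pmatrix}0&0\\-i\tilde A(u,\omega)&-\tilde A^0(u)\end{pmatrix}$ is the matrix appearing in $\mathcal W_1$ of (D2). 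Since (D2) says exactly that $\Rep\mathcal W_1$ is uniformly negative on the eigenspaces of $\mathcal W_0$ --- the multi-dimensional Shizuta--Kawashima genuine-coupling condition for the pair $(\mathcal B,\mathcal R)$ --- and (D1) is its low-frequency counterpart, the classical compensating-matrix argument, carried out symbolically in \cite{FS21,S24}, produces from the symbolic symmetrizer $\mathcal S(u,\omega)$ of $i\mathcal B$ (from (H$_B$)) plus a bounded self-adjoint correction a symbol $k\in C^\infty(\mathcal U_0\times\R^d;\C^{2n \times 2n})$, on a neighbourhood $\mathcal U_0$ of $0$ (the conditions (H), (D) being open), which is self-adjoint, uniformly positive ($k\ge c_0I$) and bounded with all $u$- and $\xi$-derivatives, and satisfies
\[
\Rep\big(k(u,\xi)\,\mL(u,\xi)\big)\ \le\ -c\,\kappa(|\xi|)\,I,\qquad\kappa(\rho)=\tfrac{\rho^2}{1+\rho^2},\qquad u\in\mathcal U_0,\ \xi\in\R^d.
\]
This is Remark~\ref{rem:d} in operator form, and involves only $|u|$ small --- the profile has not yet appeared.

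\emph{Step~2 (quantization and the three bounds).} Put $K(w):=\Op[k^{1/2}(w(\cdot),\cdot)]^*\,\Op[k^{1/2}(w(\cdot),\cdot)]$, where $k^{1/2}(u,\xi)$ is the smooth, positive, order-$0$ matrix square root of $k(u,\xi)$. If $c_1$ is small enough that $\|w(t)\|_{W^{2,\infty}}\le c_1$ keeps $w$ in $\mathcal U_0$, then $(x,\xi)\mapsto k^{1/2}(w(t,x),\xi)$ is an order-$0$ symbol whose para-differential seminorms are bounded by $\|w(t)\|_{W^{2,\infty}}$, and the calculus of Appendix~\ref{s:paradiff} applies with these $W^{k,\infty}$-bounds in place of the Sobolev bounds of \cite{S24}. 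Thus $K(w)$ is self-adjoint, non-negative and $L^2$-bounded ($K(w)\le C_1$), one has $K(w)=\Op[k(w(\cdot),\cdot)]+\mathcal R_{-1}$ with $\mathcal R_{-1}$ of order $-1$ and norm $O(c_1)$ (it vanishes when the coefficient is frozen), and $\Op[k^{-1/2}]\Op[k^{1/2}]=I+O(c_1)$, which forces $K(w)\ge(1/C_1)I$ for $c_1$ small. For the dissipative bound, the composition and adjoint calculus for $k\in S^0$, $\mL\in S^1$ give
\[
\Rep\big(K(w)\Op[\mL(w,\cdot)]h,h\big)=\big(\Op[\Rep(k\mL)(w,\cdot)]h,h\big)+\big((\mathcal R_c+\mathcal R_a)h,h\big),
\]
where the composition- and adjoint-remainders $\mathcal R_c,\mathcal R_a$ --- together with $\mathcal R_{-1}\Op[\mL]$ and the sharp-G\r{a}rding remainder of the order-one non-negative symbol $-\Rep(k\mL)(w,\cdot)-c\kappa(|\xi|)I$ --- \emph{all carry at least one factor of $\partial_x w$} (they vanish for frozen coefficient), hence are $L^2$-bounded of norm $O(\|\partial_x w\|_{W^{1,\infty}})=O(c_1)$. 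Using $\kappa(|\xi|)=1-\lxi^{-2}$ the leading term is $\big(\Op[\Rep(k\mL)(w,\cdot)]h,h\big)\le -c\,(\kappa(D)h,h)+O(c_1)\|h\|^2=-c\|h\|^2+c\|h\|^2_{-1}+O(c_1)\|h\|^2$, so for $c_1$ small
\[
\Rep\big(K(w)\Op[\mL(w,\cdot)]h,h\big)\ \le\ -\tfrac c2\|h\|^2+c\,\|h\|^2_{-1},
\]
which is the claim with $c_2=c/2$, $C_2=c$. Absorbing every remainder into the small Lipschitz size of $w$ is the para-differential damping mechanism of \cite{S24,Z24}, and is where the $W^{2,\infty}$-smallness of $w$ --- ultimately the small-amplitude assumption on $\bar u$ --- enters.

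\emph{Step~3 (dependence on $t$).} For $w\in C([0,T],W^{2,k})\cap C^1([0,T],L^\infty)$, the chain rule at the symbol level gives $\partial_t\big[k^{1/2}(w(t,x),\xi)\big]=(\partial_u k^{1/2})(w(t,x),\xi)\,w_t(t,x)$, an order-$0$ symbol with $L^\infty$-coefficient norm $\le C\|w_t(t)\|_\infty$ (the constant depending only on $\sup_t\|w(t)\|_\infty\le c_1$). Differentiating the product defining $K(w(t))$ then shows $t\mapsto K(w(t))$ is continuously differentiable as a family of bounded operators on $L^2$, with $\tfrac d{dt}K(w(t))$ a sum of order-$0$ para-differential operators linear in $\partial_t k^{1/2}(w(t),\cdot)$, whence $\big\|\tfrac d{dt}K(w(t))f\big\|\le C\|w_t(t)\|_\infty\|f\|$; continuity in $t$ follows from continuity of $t\mapsto w(t)$ in $W^{2,k}$ and of $t\mapsto w_t(t)$ in $L^\infty$, together with continuity of the quantization map.

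\emph{Main obstacle.} Conceptually, Step~1 --- the Kawashima compensator extracted from condition (D) --- is the heart, but it is imported unchanged from \cite{S24,FS21}. The one genuine task, and the only departure from \cite{S24}, is the bookkeeping of Step~2: one must check that \emph{no} error arising in the expansion of $\Rep(K(w)\Op[\mL(w,\cdot)]h,h)$ --- composition, adjoint, square-root, or sharp-G\r{a}rding remainder --- survives at frequency infinity without a prefactor of order $\|\partial_x w\|_{W^{1,\infty}}\le c_1$, so that the fixed dissipation rate $c$ of the symbol is not consumed. This is precisely where the smallness of $w$ in $W^{2,\infty}$ is indispensable.
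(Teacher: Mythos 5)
Your proposal is correct and follows essentially the same route as the paper, which simply invokes the symmetrizer construction of \cite[Section 3]{S24} with the sole modification that the adjoint/composition/G\aa{}rding remainders for symbols $(x,\xi)\mapsto F(w(x),\xi)$ are estimated through the $W^{k,\infty}$-norms of $w$ (Appendix \ref{s:paradiff}) rather than $H^s$-norms; your Steps 1--3 are precisely a fleshed-out version of that argument, with the $W^{2,\infty}$-smallness of $w$ absorbing all order-zero remainders exactly as the paper intends. The only cosmetic point is that the symbol inequality of your Step 1 need only hold modulo an order $-1$ correction (and the G\aa{}rding remainder carries $\|w\|_{W^{2,\infty}}^{1/2}$ rather than a factor of $\partial_x w$), but such terms are absorbed into $C_2\|h\|_{-1}^2$, so nothing is lost.
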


\begin{proof}[Proof of Proposition \ref{prop:central1}]

For $\alpha \in \N_0^d$ with $1\le|\alpha| \le s$, we apply $\partial_x^\alpha$ to \eqref{hyphyp3} and find 
\begin{align} 
	\label{r2}
	\partial_x^\alpha V_t&=\Op[\mL(u,\xi)]\partial_x^\alpha V+R_2,\\
	\nonumber
	R_2&=\partial_x^\alpha R(u,D_{x,t}v,D_{x,t}u)+[\partial_x^\alpha,L(u)]V\\
	\nonumber
	&\quad +(\mL(u)-\Op[\mL(u,\xi)])\partial_x^\alpha V+\partial_x^{\alpha}G.
\end{align}
By Moser and Sobolev inequalities as well as Lemma \ref{lem:diff}, we estimate
$$\|R_2\| \le C((\delta+\mu)\|V\|_{s}+\|V\|_{s-1}),$$
where $C$ depends monotonically increasing on $\mu,\delta$ and $\|\bar{u}\|_{1,\infty}$.
Next, we apply $K(u)=K(u(t))$  constructed in Lemma \ref{lem:kv} to \eqref{r2} and take the scalar product with $\partial_x^\alpha V$ to obtain for $\mu$ sufficiently small,
\begin{align*}\frac12\frac{d}{dt} (K(u)\partial_x^\alpha V,\partial_{x}^\alpha V) &=\Rep(K(u)\Op[\mL(u,\xi)]\partial_x^\alpha V,\partial_x^\alpha V)\\
	&\quad+\Rep((K(u))_t\partial_x^\alpha V,\partial_x^\alpha V)+\Rep(R_3,\partial_x^\alpha V,\partial_x^\alpha V)\\
&\le -c_1\|\partial_x^\alpha V\|^2+C\|u_t\|_\infty\|\partial_x^\alpha V\|^2+C(\eps+\mu)\|V\|^2+C\|V\|_{s-1}.
\end{align*}
Noting that $u_t=v_t$ and using interpolation, we find a $c>0$ such that
$$\frac12\frac{d}{dt} (K(u)\partial_x^\alpha V,\partial_{x}^\alpha V) \le -c\|V\|_s^2+C\|V\|^2,$$
and the result follows by Gronwall's inequality.
\end{proof}

As mentioned in the introduction, Theorem \ref{the:main} can now be shown by a standard nonlinear iteration scheme \cite{Z04,Z07} and we conclude the paper by recalling the crucial steps. 

\begin{proof}[Proof of Theorem \ref{the:main}]
	 Let $s \ge s_0+2$, $(\phi,\psi): \R^d \to \R^n$ with $$(\phi-\bar u,\psi) \in W^{1,1} \cap H^{s+1} \times L^1 \cap H^s$$  and corresponding norms smaller than $\delta_2>0$, $\delta_2$ to be chosen later.
	 Let $T>0$ be the largest constant such that a unique solution $u$ to \eqref{hypreg} with
	  $v:=(u-\bar u) \in C^l([0,T),H^{s+1-l})$, $l=0,\ldots, s$, exists, and additionally satisfies
	  $$\|(v(t),v_t(t))\|_{s+1,s} \le \mu, ~t \in [0,T),$$
	  with $\mu$ as in Proposition \ref{prop:central1}. Note that $T>0$ by Proposition \ref{prop:locwell}. Next, by definition, $W=(v,\mathcal A(\bar u)v_t)$ satisfies
	$$W_t-LW=\partial_x Q^1(v,v_t,\partial_x v)+\partial_t Q^2(v,v_t,\partial_x v)=:Q,$$
	where for $l=1,2$,
	\begin{align*} 
		Q^l(v,v_t,\partial_x v)= (0,\sum_{j=0}^dq^l_j(u)u_{x_j}+q^l_{d+1}(u)u),~~x_0=t,
	\end{align*}
	for smooth functions $q^l_j$ with $q^l_j(0)=0$.
	Thus for $(v,v_t)$ uniformly bounded in $H^{s+1} \times H^s$, $s \ge s_0+2$, we find by Moser and Hölder inequalities
	\begin{align}
		\label{q1}
		\|Q^l(v,v_t,\partial_x v)\|_{s_0+2} +\|Q^l(v,v_t,\partial_x v)\|_{W^{1,1}} \le C\|W\|_{s+1,s}^2,
	\end{align}
 	As is standard, for $t \in [0,T]$ define
 	$$\zeta(t)=\sup_{0 \le \tau \le t}(\|W\|_{L^\infty}(1+\tau)^{\frac{d-1}{2}}+\|W\|(1+\tau)^{\frac{d-1}{4}}).$$
 	To improve readability, we use
 	$$|\|W_0\||:=(\|W_0\|_{W^{1,1} \times L^1}+\|W_0\|_{s+1,s})(1+t)^{-\frac{d-1}{2}}.$$
 	By Proposition \ref{prop:central1}, with $V=(v,v_t)$ (note that $\|W\|_{s+1,s}$ is equivalent to $\|V\|_{s}$), we find for $W_0=W(0)=(\phi-\bar u,-\mathcal A(\bar u)\psi)$,
  		\begin{align}
  			\label{v2}
 		\|W(t)\|_{s+1,s}^2 &\le Ce^{-\theta t}\|W_0\|_{s+1,s}^2+\int_0^t e^{-c\theta(t-\tau)}\|W(\tau)\|^2_{s+1,s}d\tau\\
 		\nonumber
 		& \le C(\|W_0\|^2_{s+1,s}+\zeta(t)^2)(1+t)^{-\frac{d-1}{2}}.
 		\end{align}
 
 	Hence, Corollary \ref{decacy:lin} and equations \eqref{q1}, \eqref{v2} yield for some $c>0$
 	
 	\begin{align*}
 	\int_0^t e^{-\theta(t-\tau)} \|W(s)\|_{1,0}^2 ds &\le C(1+t)^{-\frac{d-1}{2}}(|\|W_0\||^2+\|Q^2(0)\|^2_{H^2 \cap L^1})\\
 	&\quad+C \int_0^t e^{-c(t-\tau)}(\|\partial_xQ^1\|_2^2+\|Q^2\|_3^2)d\tau\\
 	& \quad+ C \left(\int_0^t (1+t-\tau)^{-\frac{d}{4}-\frac12}
 	(\|Q^1\|_{L^1}+\|Q^2\|_{L^1})d\tau\right)^2\\
 	&\le C(1+t)^{-\frac{d-1}2}|\|W_0\||^2+C\int_0^te^{-c(t-\tau)}\|W\|_s^4 ds\\
 	&\quad +C\left(\int_{0}^t (1+t-\tau)^{-\frac{d}{4}-\frac12} \|W\|_s^2 d\tau\right)^2,\\
 	&\le C (1+t)^{-\frac{d-1}2}(|\|W_0\||^2+\zeta(t)^4),
 	\end{align*}
 	where for the last estimate, $d \ge 2$ is crucial. Using again \eqref{v2}, this implies
 	\begin{equation} 
 		\label{zeta}
 	\|W(t)\|_{s+1,s} \le C_0(1+t)^{\frac{d-1}{4}}(|\|W_0\||+\zeta(t)^2).
 	\end{equation}
 	for some $C_0>0$. Similarly, from Propositions  \ref{prop:dl1}, \ref{prop:dect}, we obtain the $L^\infty$-estimate
 	\begin{align*} 
 		\|W(t)\|_{L^\infty} &\le \|S(t)W_0\|_{L^\infty}+\left\|\int_0^t \|S(t-\tau)(\partial_xQ^1(\tau)+\partial_\tau Q^2(\tau))d\tau\right\|_{L^\infty}\\
 		& \le C(|\|W_0\||(1+t)^{-\frac{d-1}{2}}+\|Q^2(t)\|_{L^\infty}+(1+t)^{-\frac{d-1}2}\|Q^2(0)\|_{H^{s_0+1} \cap L^1})\\
 		&\quad +C\int_0^t e^{-c(t-\tau)} \|\partial_{x}Q^1\|_{s_0+1}+\|Q_2\|_{s_0+2}d\tau+(1+t-\tau)^{-d/2}\|Q_1\|_{L^1}+\|Q_2\|_{L^1}d\tau\\
 		&\le C(|\|W_0\||+|\|W_0\||^2+\zeta(t)^2)(1+t)^{-\frac{d-1}{2}}+C\int_0^t (1+t-\tau)^{-\frac{d}{2}}\|W(\tau)\|_{s+1,s}^2d\tau\\
 		&\le C(|\|W_0\||+|\|W_0\||^2+\zeta(t)^2)(1+t)^{-\frac{d-1}{2}}+C\zeta(t)^2\int_0^t (1+t-\tau)^{-\frac{d}{2}} (1+\tau)^{-\frac{d-1}{2}}d\tau\\
 		&\le C_1(1+t)^{-\frac{d-1}{2}}(|\|W_0\||+|\|W_0\||^2+\zeta(t)^2).
 	\end{align*}
 	Adding this to \eqref{zeta}, we get for some $C_0>0$
 	$$
 		\zeta(t) \le C_2(|\|W_0\||+\zeta(t)^2),
 	$$	
 	which implies by continuity of $\zeta$
 	\begin{equation}
 		\label{w0}
 		\zeta(t) \le C_2|\|W_0\||,
 	\end{equation}
 	for some $C_1, C_2>0$ and $|\|W_0\|| < \tilde\delta$, $\tilde \delta=\tilde\delta(C_1,C_2)$ sufficiently small. Due to \eqref{zeta}, this gives the desired decay estimate on $[0,T)$ for $p=2,\infty$. For arbitrary $p \in [2,\infty]$, the estimate follows by interpolation.
 	Lastly, combining  \eqref{zeta} and \eqref{w0}, we can conclude 
 	\begin{align*} 
 	\|(v(t),v_t(t))\|_{s+1,s}&\le \max\{1,\sup_{x \in \R}\mathcal A(\bar u(x))^{-1}\}\|W(t)\|_{s+1,s} \\
 	&\le  \max\{1,\sup_{x \in \R}\mathcal A(\bar u(x)^{-1})\}(C_0+C_2)|\|W_0\||=:C_3|\|W_0\||
 	\end{align*}
 	for $t \in [0,T)$, $|\|W_0\||< \tilde \delta$. Thus, if $\delta_2 < \min\{\tilde \delta,\mu/(2C_3),\}$, we obtain
 	$$\sup_{t \in [0,T)}\|v(t),v_t(t)\|_s < \mu/2,$$
 	which implies $T=\infty$ and finishes the proof. 
\end{proof}

\subsection{Cases without (S5)} \label{s:5.2}

In \cite{N09} Nguyen was able to remove condition (S5) in the hyperbolic-parabolic case at the cost of a worse decay rate. In this section, we point out that his results also apply in our case. As an aside, we also do not need to assume (S6). As stated in the introduction, assumption (S5) is automatic in $2$ dimensions. For $d\ge 3$, we have the following theorem. 
\begin{theo}
	\label{the:S5} 
	Under the assumptions of Theorem \ref{the:main}, excluding (S5), (S6), there exists a unique global solution $u \in C([0,\infty),H^{s+1}) \cap C^1([0,\infty),H^{s})$ to \eqref{hyp-hyp} with $(u(0),u_t(0))=(\phi,\psi)$ and, for $t \ge 0$, $2 \le p \le \infty$,
	$$
		\begin{aligned}
			\|u(t) - \bar u\|_{s+1}+\|u_t(t)\|_s &\leq C(1+t)^{-\frac{d-2}{4}}(\|\phi-\bar u\|_{s+1}+\|\psi\|_{s}+\|(\phi-\bar u,\psi)\|_{W^{1,1}\times L^1}),\\
			\|(u(t) - \bar u,u_t(t))\|_{L^p} &\leq C(1+t)^{-\frac{d-1}{2}(1-1/p)+\frac14}(\|\phi-\bar u\|_{s+1}+\|\psi\|_{s}+\|(\phi-\bar u,\psi)\|_{W^{1,1}\times L^1}),
		\end{aligned}
	$$
\end{theo}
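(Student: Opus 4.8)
The proof is a copy of that of Theorem~\ref{the:main}; the only ingredient that must be re-examined is the small-frequency resolvent estimate Proposition~\ref{prop:resolvent}(i), since its derivation in Section~\ref{s:4} is the sole place where (S5) --- and, through Proposition~\ref{prop:dec}, (S6) --- is used. I first note that (S6) enters the proof of Proposition~\ref{prop:dec}(ii) only in order to \emph{diagonalise} the matrix $L\check A^{1}H_{1}R$ and hence produce the clean Vandermonde form of $T_g$; the replacement analysis below, which follows Nguyen \cite[Section~4]{N09}, never diagonalises the glancing block, so (S6) becomes superfluous. Likewise, (S5) is invoked only to guarantee that the glancing multiplicities $\bar s_r$ are locally constant, so that the glancing set splits into finitely many smooth surfaces and the sharp exponent $1/\bar s_r-1$ appears in $\beta$ of \eqref{alpha}. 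Without (S5) one must treat the glancing blocks by a cruder, uniform argument, and this --- and nothing else --- costs the extra factor $(1+t)^{1/4}$. The para-differential nonlinear damping estimate Proposition~\ref{prop:central1}, as well as the intermediate- and high-frequency resolvent bounds, use neither (S5) nor (S6) and carry over verbatim.

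The plan is thus as follows. In place of the reduction of the glancing block $Q_{gr}$ to Vandermonde form in Proposition~\ref{prop:dec}(ii), I construct, following the degenerate Kreiss-symmetrizer scheme of \cite[Section~4]{N09} (a streamlining of \cite[Section~12]{GMWZ05}), a bounded Hermitian uniformly positive-definite matrix family $\mathcal R_{gr}(\rho,\hat\zeta)$ on $\Omega_0\setminus\{\rho=\hat\gamma=0\}$ --- \emph{not} a block-diagonaliser --- satisfying $\Rep(\mathcal R_{gr}Q_{gr})\le-c(\rho+\hat\gamma)\mathcal R_{gr}$ on the stable subspace and $\Rep(\mathcal R_{gr}Q_{gr})\ge c(\rho+\hat\gamma)\mathcal R_{gr}$ on the unstable one, whose conditioning degrades only like $(\rho+\hat\gamma)^{-1/2}$ rather than like $\beta^{-1}$. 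The algebraic input is precisely Lemma~\ref{lem:h0}: the perturbation matrix $Q=L\check A^{1}H_{1}R$ still has $\Rep\mu_j\le-c$, and the expansions \eqref{mu2}, \eqref{pi} of the glancing eigenvalues persist without (S5) or (S6). Substituting this symmetrizer for the $T_g$-bounds in the energy estimate leading to \eqref{decW} and in Proposition~\ref{prop:dec2}, I obtain, for $\rho$ small and $(\rho,\hat\zeta)\in S^d_c$,
\[
|\mathcal V|_{L^p}\ \le\ C\,\rho^{-1}(\rho+\hat\gamma)^{-1/2}\,|\mathcal F|_{L^1},
\]
and hence, via Remark~\ref{rem:hatU} and Corollary~\ref{coro:aux} exactly as in the proof of Proposition~\ref{prop:resolvent}(i), the estimates \eqref{sfre}, \eqref{sfre2} with the sharp glancing factor $\beta$ replaced throughout by $(\rho+\hat\gamma)^{-1/2}$ (which on the relevant contour is $\le C\rho^{-1/2}$).

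With this weaker small-frequency bound, the rest of the argument is structurally unchanged. The substitution $\beta\mapsto(\rho+\hat\gamma)^{-1/2}$ degrades each rate obtained in passing from \eqref{sfre} to the semigroup estimates Propositions~\ref{prop:dl1}(i) and \ref{prop:dect} --- where one integrates $e^{\lambda t}$ along the low-frequency contour and over $\eta$ --- by exactly one half-power of $t$; thus Proposition~\ref{prop:dl1}(i) and Corollary~\ref{decacy:lin} hold with $(1+t)^{-\frac{d-1}{2}(1-\frac1p)}$ and $(1+t)^{-\frac{d-1}{4}}$ replaced by $(1+t)^{-\frac{d-1}{2}(1-\frac1p)+\frac14}$ and $(1+t)^{-\frac{d-1}{4}+\frac14}=(1+t)^{-\frac{d-2}{4}}$. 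One then runs the nonlinear iteration of the proof of Theorem~\ref{the:main} verbatim with these adjusted rates; the one place where the dimension enters --- the convolution integrals bounding $\int_0^te^{-\theta(t-\tau)}\|W(s)\|_{1,0}^2\,ds$ and $\|W(t)\|_{L^\infty}$ --- now closes for $d\ge3$ rather than $d\ge2$, which is exactly the hypothesis, and the $\zeta(t)$-bootstrap of the proof of Theorem~\ref{the:main} then gives global existence together with the stated decay.

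The main obstacle is the construction of $\mathcal R_{gr}$: one must verify that, even though the system need not be simultaneously symmetrizable and (S6) is absent, the reduced glancing matrix still carries the sign structure of Lemma~\ref{lem:h0} --- which it does, since (D1) holds along the whole profile by (S4) together with continuity and small amplitude --- and that the transformations $T_1,T_{H1},T_h$ stay uniformly bounded, so that the only loss is the $(\rho+\hat\gamma)^{-1/2}$ coming from the glancing block itself. Tracking how this half-power propagates through the contour integrals to give the sharp $(1+t)^{1/4}$ loss (and no worse) is the other delicate point, but this bookkeeping is identical to \cite[Section~4]{N09} and requires no new ideas.
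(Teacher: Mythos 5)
Your proposal is correct and follows essentially the same route as the paper: both replace the uniform diagonalization of the glancing blocks (the only place (S5), (S6) enter) by Nguyen's degenerate-symmetrizer treatment, with the sign condition of Lemma \ref{lem:h0} (i.e. (D1) at the endstates) as the algebraic input, yielding the small-frequency resolvent bound with $\beta$ replaced by $\rho^{-1/2}$ and hence the $(1+t)^{1/4}$-degraded decay, closing the iteration for $d\ge 3$. The only cosmetic difference is that the paper packages the required glancing-block input as a structural lemma (Lemma \ref{lem:S5}, following \cite{MZ05}) and then cites \cite{N09} for the $\rho^{-3/2}$ estimate, which is precisely what your symmetrizer construction would re-derive.
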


(S5), (S6) only enter in the estimates for the low frequency regime. Thus, Theorem \ref{the:S5} follows with the same arguments as before once we have shown
$$\|S_1(t)\partial_{x}^\tau F\|_{L^p} \le C(1+t)^{-\frac{d-1}{2}(1-1/p)+\frac14-\frac{|\tau|}{2}}\|F^1\|_{W^{1,1+|\tau|}}+\|F^2\|_{L^1},~~|\tau|=0,1$$
As shown in \cite{N09}, it is sufficient to  prove the following resolvent estimate.

\begin{lemma}
	\label{lem:resS5}
		There exists $r>0$  such that for all $(\lambda,\eta) \in M_c$ with $|(\lambda,\eta)|\le r$ for all $2 \le p \le \infty$, 
\begin{align} 
	\label{sfreS5}
	|(\lambda-L_\eta)^{-1}\hat F|_{L^p} &\le C |(\lambda,\eta)|^{-\frac32}(|\hat F^1|_{W^{1,1}}+|\hat F^2|_{L^1}),\\
	\label{sfre2S5}
	|(\lambda-L_\eta)^{-1}\partial_{x} \hat F|_{L^p} &\le C|(\lambda,\eta)|^{-\frac12} (|\hat F^1|_{W^{1,2}}+|\hat F^2|_{L^1})
\end{align}
\end{lemma}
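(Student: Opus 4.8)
The plan is to follow \cite[Section 4]{N09}, adapting its treatment of glancing modes to the hyperbolic--hyperbolic setting in exactly the way Section \ref{s:4} adapted \cite{GMWZ05}. First I would reduce, precisely as in the proof of Proposition \ref{prop:resolvent}(i): using Remark \ref{rem:hatU} to express $\hat U^1$ and $\hat v=\mathcal A^{-1}\hat U^2$ as solutions of \eqref{flt} with right-hand sides assembled from $L^{22}_\eta\hat F^1$, $L^{21}_\eta\hat F^1$, $\hat F^2$ and $\lambda\hat F^j$, the two claimed bounds \eqref{sfreS5}, \eqref{sfre2S5} follow from the two estimates
$$|\hat u|_{W^{1,p}}\le C\rho^{-3/2}|\hat f|_{L^1},\qquad |\hat u|_{L^p}\le C\rho^{-1/2}|g|_{L^1}\ \ (\hat f=g_x),$$
for solutions $\hat u$ of \eqref{flt}, $p\in[2,\infty]$. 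As in the proof of Corollary \ref{coro:aux}, the second of these is deduced from the first via Kreiss's auxiliary problem $B^{11}(\bar u)\hat w_x-A^1(\bar u)\hat w=\hat g$, whose solution satisfies $|\hat w|_{L^p}+|\hat w_x|_{L^1}\le C|\hat g|_{L^1}$ by (S3) and \cite[Lemma 12.6]{GMWZ05}, so that $\tilde u=\hat u-\hat w$ solves \eqref{flt} with a source of size $\le C\rho(|\hat w|+|\hat w_x|)$ and the first estimate wins back one power of $\rho$. Passing to the double boundary value problem \eqref{bvp}, the first estimate in turn reduces to the analogue of Proposition \ref{prop:dec2},
$$|\mathcal V|_{L^p}\le C\rho^{-3/2}|\mathcal F|_{L^1},$$
for solutions $\mathcal V$ of \eqref{bvp}.

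To prove this I would run the small-frequency argument of Section \ref{s:4} verbatim through the MZ-conjugator (Lemma \ref{lem:con}), the block decomposition of $\mathcal G(\infty)$ (Lemma \ref{lem:decomp}), and the elliptic and hyperbolic pieces $P_\pm$, $Q_{e\pm}$, $Q_{h\pm}$ of Proposition \ref{prop:dec}(i), none of which uses (S5) or (S6). The only change is to replace the fine Vandermonde diagonalisation of the glancing block in Proposition \ref{prop:dec}(ii), and the associated weighted boundary estimate of Lemma \ref{lem:estk}, by the cruder construction of \cite[Section 4]{N09}: for each glancing block $Q_{gr}$ one does not diagonalise but builds a smooth, uniformly bounded and uniformly invertible self-adjoint symmetrizer $\Sigma_r(x,\rho,\hat\zeta)$ separating incoming from outgoing glancing modes, at the cost that the resulting dissipativity is only of order $\rho$ rather than pointwise sharp. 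Because the full conjugator $\tilde{\mathcal T}$ is then uniformly bounded together with its inverse --- no factor $T_g$ with blowing-up inverse enters --- the energy estimate of Lemma \ref{lem:estW}, combined with the Evans condition (S7) through Lemma \ref{lem:ef} (which, being read off the gap-lemma bases, does not need (S5)), gives after the same summation as in Section \ref{s:4} a bound of the form $\rho^{3}\big(|\mathcal W|^2_{L^\infty}+|\mathcal W|^2_{L^2}\big)\le C|\bar{\mathcal F}|^2_{L^1}$ with $\bar{\mathcal F}=\tilde{\mathcal T}^{-1}\mathcal F$, hence $|\mathcal V|_{L^2}\le C\rho^{-3/2}|\mathcal F|_{L^1}$, and all $L^p$ with $p\in[2,\infty]$ by interpolation.

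The genuinely new input, and the main obstacle, is the construction of the glancing symmetrizer $\Sigma_r$ in the present non-symmetric framework. In \cite{N09} this rests on symmetry of the underlying first-order symbol; here the role of symmetry is played by the dissipativity already extracted in Lemma \ref{lem:h0}, namely $\pm\Rep\big(L_\tau\check A^1 H_{1\pm}(\tau,0,\eta)R_\tau\big)<-c\,|(\xi_1(\tau,\eta),\eta)|$ at the glancing value, together with the Jordan structure \eqref{qj}--\eqref{pi} of the block near it. Using these in place of symmetry one should be able to push Nguyen's block-by-block construction through for every multiplicity $\bar s_r$ of the glancing root, exactly as Section \ref{s:4} generalised the symmetric arguments of \cite{GMWZ05}; it is here that (S6) becomes unnecessary, since one never diagonalises $J_E^*W_1J_E$ but only builds a symmetrizer. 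Once Lemma \ref{lem:resS5} is established, Theorem \ref{the:S5} follows by the iteration of the proof of Theorem \ref{the:main}, with the $(1+t)^{1/4}$ loss propagating through the nonlinear estimates exactly as in \cite{N09}.
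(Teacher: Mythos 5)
Your reduction is exactly the paper's: via Remark \ref{rem:hatU}, the Kreiss auxiliary problem of Corollary \ref{coro:aux}, and the passage to the doubled boundary value problem \eqref{bvp}, both claimed bounds follow once one shows $|\mathcal V|_{L^p}\le C\rho^{-3/2}|\mF|_{L^1}$, and you are right that the MZ-conjugator, Lemma \ref{lem:decomp}, the elliptic/hyperbolic parts of Proposition \ref{prop:dec}, and Lemma \ref{lem:ef} survive without (S5), (S6). The genuine gap is precisely the step you flag as the ``main obstacle'': the treatment of the glancing blocks is not carried out. The ``smooth, uniformly bounded, uniformly invertible self-adjoint symmetrizer $\Sigma_r$'' with order-$\rho$ dissipativity is asserted rather than constructed, and the claim that, with a uniformly bounded conjugator, Lemma \ref{lem:estW} then yields $\rho^{3}\bigl(|\mW|^2_{L^\infty}+|\mW|^2_{L^2}\bigr)\le C|\bar{\mF}|^2_{L^1}$ is not justified: an order-$\rho$ damping fed into the argument of Lemma \ref{lem:estW} does not by itself produce the specific $\rho^{-3/2}$ loss, and nothing in your sketch explains where the extra half power of $\rho$ (relative to the $\rho^{2}$-weighted estimate of Section \ref{s:4}) comes from. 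Since this glancing-block estimate is the entire new content of the lemma, the proposal as written does not constitute a proof.

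The paper closes this gap by a different, and lighter, device: no new symmetrizer is built at all. One proves instead a normal-form statement (Lemma \ref{lem:S5}, following \cite{M00} and \cite[Lemma 2.10]{MZ05}): each glancing block is smoothly conjugated to $\operatorname{diag}(q,\ldots,q)+\rho N$, where $q$ is purely imaginary at $\hat\gamma=0$ with $q(\underline{\hat\zeta})=i(\underline\xi_1 I+J)$, the blocks $N^{jk}(0,\underline{\hat\zeta})$ are nonzero only in their first column, and the matrix $N^{\flat}$ of lower-left corners satisfies $\Rep\bigl(\partial_\gamma q_{1\bar s}(\underline{\hat\zeta})\,N^{\flat}\bigr)>0$. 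This last sign condition is exactly where Lemma \ref{lem:h0} replaces symmetry, via $\partial_\gamma q_{1\bar s}=-p^{-1}$ (see \eqref{qgamma}) and the identification of the spectrum of $N^{\flat}$ with that of $p^{-1}L\check A^1H_1R$; in \cite{N09} the analogous positivity is automatic from symmetry, but Nguyen's low-frequency estimates only use the block structure plus this sign condition together with Lemma \ref{lem:ef}, so once (i)--(iii) are verified the bound $|\mathcal V|_{L^p}\le C\rho^{-3/2}|\mF|_{L^1}$ can be quoted from \cite{N09} as a black box. The endgame is then as in your first paragraph, with $\beta$ replaced by $\rho^{-1/2}$. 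So your idea of substituting the dissipativity of Lemma \ref{lem:h0} for symmetry is the right one, but to complete the argument you must either actually carry out the symmetrizer construction you gesture at, or, as the paper does, verify the M\'etivier--Zumbrun block structure so that Nguyen's estimate applies verbatim.
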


We again use the equivalence of \eqref{flt} with the boundary value problem \eqref{bvp}. (S5), (S6) are only used to uniformly diagonalize the glancing blocks in Proposition \ref{prop:dec}. Without (S5), (S6), this can not be done. Instead we show the following weaker result, whose proof goes analogous to \cite{GMWZ04, MZ05}:

\begin{lemma}
	\label{lem:S5}
	For each $\underline {\hat \zeta} \in S^d_+$, there exists a neighbourhood of $(0,\underline{ \hat \zeta})$, and for each glancing block $Q\in \C^{ m\bar s\times m\bar s}$, a smooth basis transformation $T \in C^\infty(\Omega_0,\operatorname{Gl}_{m\bar s})$ such that
	$$(T^{-1}QT)(\rho,\hat \zeta)=\begin{pmatrix}
			q(\hat \zeta) & \cdots & 0 \\
			\vdots & \cdots & 0\\
			0 & \cdots & q(\hat \zeta)
		\end{pmatrix}+\rho\begin{pmatrix}
		N^{11}(\rho, \hat \zeta) & \cdots & N^{1m}(\rho,\hat \zeta)\\
		\vdots & \cdots & \vdots\\
		N^{ m 1}(\rho, \hat \zeta) &  \cdots & N^{ m m}(\rho \hat \zeta)\end{pmatrix}.$$
with $q(\hat \zeta), N^{jk} \in \C^{ \bar{s} \times \bar s}$ satisfying the following properties:
\begin{enumerate}
	\item[(i)] $q(\tau,\gamma,\eta)$ has purely imaginary coefficients for $\gamma =0$ and $q(\underline{\hat \zeta})=i(\underline \xi_1I_\nu+J)$ for some $\underline \xi_1 \in \R$, where $J$ is the Jordan block of size $\bar s$. 
	\item[(ii)] For some $n_{jk} \in \C$,
	$$N^{jk}(0,\underline{\hat \zeta})=\begin{pmatrix}
		\ast & 0 & \ldots & 0\\
		\vdots & 0 & \ldots & 0 \\
		n_{jk} & 0 & \ldots & 0
	\end{pmatrix}.$$
\item[(iii)] For
$$N^{\flat}=\begin{pmatrix}
	n_{11} & \cdots & n_{1m}\\
	\vdots & \cdots & \vdots\\
	n_{m1} & \cdots & n_{m m}
\end{pmatrix},$$ 
we find $\operatorname{Re}(\partial_{\gamma}q_{1\bar s}(\underline{ \hat \zeta}) N^{\flat}) >0$.
\end{enumerate} 
\end{lemma}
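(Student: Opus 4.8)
The plan is to reach the asserted normal form in three stages, following the inviscid glancing analysis of M\'etivier as used in \cite[Section~12]{GMWZ05}, \cite{MZ05,GMWZ04}: a $\rho$-independent transformation bringing the inviscid part $H_{0\pm}$ to the companion-block form required by (i); then inclusion of the $O(\rho)$-correction $H_{1\pm}$ from Lemma~\ref{lem:decomp}, followed by a gauge transformation $I+\rho S$ arranging (ii); and finally identification of the sign in (iii) from the dissipativity estimate of Lemma~\ref{lem:h0}. Throughout we treat the ``$+$''-case, the other being identical. Recall that a glancing block $Q\in\C^{m\bar s\times m\bar s}$ is attached to a base point $(0,\underline{\hat\zeta})$ with $\underline{\hat\gamma}=0$, at which $i\underline\xi_1$ is an eigenvalue of $H_{0+}(\underline{\hat\zeta})$, $\underline\xi_1$ is a root of multiplicity $\bar s$ of $\xi_1\mapsto p(\underline{\hat\tau},\xi_1,\underline{\hat\eta})$, and $\underline{\hat\tau}=a_l(\underline\xi_1,\underline{\hat\eta})$ is an eigenvalue of $-\check A(\underline\xi)$ of multiplicity $m$, constant by (S2). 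On the smooth rank-$m$ spectral subspace of $-\check A(\xi)$ at $a_l(\xi)$ the matrix $-\check A(\xi)$ acts as the scalar $a_l(\xi)\,I_m$ (semisimplicity, (H$_A$)), so the glancing part of $H_{0+}$ is governed by the scalar branch $a_l(\cdot,\hat\eta)$ tensored with $I_m$; M\'etivier's companion-block construction then yields a smooth $T_0\in C^\infty(\Omega_0,\operatorname{Gl}_{m\bar s})$ with $T_0^{-1}(H_{0+}|_{\mathrm{block}})T_0=\operatorname{diag}(q(\hat\zeta),\dots,q(\hat\zeta))$, $q\in\C^{\bar s\times\bar s}$. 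Since $H_{0+}(\hat\tau,0,\hat\eta)=-i(\check A^1)^{-1}(\hat\tau I+\check A_2(\hat\eta))$ is $i$ times a real matrix, $q$ may be taken $i$ times a real matrix when $\hat\gamma=0$, with $q(\underline{\hat\zeta})=i(\underline\xi_1 I_{\bar s}+J)$; this is (i).

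Block-diagonalizing $\mG$ with respect to slow/fast modes and conjugating by $T_0$, the glancing block becomes $\operatorname{diag}(q,\dots,q)+\rho\hat N(\rho,\hat\zeta)$ with $\hat N(0,\hat\zeta)=T_0^{-1}(H_{1+}|_{\mathrm{block}})T_0$; write $\hat N=(\hat N^{jk})_{1\le j,k\le m}$ in the natural $\bar s\times\bar s$-block decomposition. Conjugating further by $I+\rho S(\rho,\hat\zeta)$ replaces each $\hat N^{jk}$ by $N^{jk}:=\hat N^{jk}+[q,S^{jk}]$ modulo $O(\rho)$; since the space of matrices supported in the first column is a complement of $\operatorname{Im}(\operatorname{ad}_{q(\underline{\hat\zeta})})=\operatorname{Im}(\operatorname{ad}_J)$ in $\C^{\bar s\times\bar s}$ (a direct computation), one may choose $S^{jk}$ constant so that each $N^{jk}(0,\underline{\hat\zeta})$ is first-column-supported, giving (ii) with $n_{jk}$ its $(\bar s,1)$-entry; setting $T=T_0(I+\rho S)$ completes the normalization, and (i) is preserved since $I+\rho S$ leaves the $\rho^0$-term unchanged. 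For (iii), a computation of the $\gamma$- and $\rho$-dependence of the reduced characteristic polynomial governing the glancing block, parallel to the derivation of the expansion \eqref{pi} of $\pi_s^j$ and of $b_j$ in the proof of Proposition~\ref{prop:dec} and using the expression \eqref{h1}, shows that $\partial_\gamma q_{1\bar s}(\underline{\hat\zeta})\,N^\flat$ is a positive real multiple of $-L_\tau\check A^1 H_{1+}(\underline{\hat\tau},0,\underline{\hat\eta})R_\tau$ (the factor $p=\partial_{\xi_1}^{\bar s}a_l(\underline\xi_1,\underline{\hat\eta})$ occurring as $p^2>0$ and the powers of $i$ cancelling); since $\Rep\big(L_\tau\check A^1 H_{1+}(\underline{\hat\tau},0,\underline{\hat\eta})R_\tau\big)$ is negative definite by Lemma~\ref{lem:h0}, this gives (iii).

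The main obstacle is the first stage: without (S5), the inviscid glancing block $H_{0+}|_{\mathrm{block}}$ has no constant Jordan structure away from the base point, so the smooth companion-block reduction cannot be produced by the analytic matrix perturbation theory used in the proof of Proposition~\ref{prop:dec} and must instead be taken from the more delicate glancing normal-form argument of \cite{MZ05,GMWZ04}, together with a verification that it applies to the matrix families $H_{0\pm}$ arising here (in particular after the renormalization by $\check A^1$). A secondary point is the sign bookkeeping in the last step, i.e., checking that the conventions fixing $q$ and $N^\flat$ are the ones making $\partial_\gamma q_{1\bar s}(\underline{\hat\zeta})\,N^\flat$ a positive rather than negative multiple of $-L_\tau\check A^1 H_{1+}R_\tau$, so that the dissipativity inequality of Lemma~\ref{lem:h0} yields the strict positivity asserted in (iii).
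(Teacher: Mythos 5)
Your proposal is correct and follows essentially the same route as the paper: the paper likewise takes the $\rho=0$ glancing block structure (property (i)) from M\'etivier's block-structure result \cite{M00}, obtains \eqref{qgamma} and the identification of the spectrum of $N^\flat$ with that of $p^{-1}L\check A^1H_1R$ by rerunning the computation of Proposition \ref{prop:dec} at the base point and invoking Lemma \ref{lem:h0} for (iii), and cites \cite{MZ05} for the remaining normalization yielding (ii) — which is exactly the $I+\rho S$ gauge argument (complement of $\operatorname{Im}(\operatorname{ad}_J)$ by first-column-supported matrices) that you spell out.
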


\begin{proof}
	We closely follow \cite[proof of Lemma 2.10]{MZ05}.
	As in the proof of Proposition \ref{prop:dec}, consider a glancing block arising from an eigenvalue $i\underline \xi_1$ of $H_{0+}(\underline{\hat \tau},0,\underline{\hat \eta})$ and suppress the index $+$. The properties of $Q(0,\hat \zeta)$ are of course solely determined by the first-order symbol and were shown in \cite{M00}. In particular, we find a first family of (smooth uniformly bounded) basis transformations such that written in this basis
	$$Q(\rho,\hat \zeta)=Q_0(\hat \zeta)+\rho Q_1(\rho, \hat \zeta),$$
	where $Q_0=\operatorname{diag}(q,\ldots,q)$, $q$ satisfying property (i). 
	
	To show condition (iii), we can just repeat the arguments in the proof of Proposition \ref{prop:dec} restricted to the base point $(\xi_1(\hat \eta),\hat \eta)=(\underline\xi_1,\underline{\hat \eta})=:\underline \xi$. This gives
	\begin{equation} 
		\label{qgamma}
		 \partial_\gamma  q_{1\bar s}(\underline{\hat \zeta})=-((\partial_{\xi_1}^{\bar s} a)(\underline \xi)^{-1})=:-p^{-1} \in \R\setminus\{0\},
	\end{equation}
	and the spectrum of $N^{\flat}$ is that of $p^{-1}L(\underline \xi)\check A^1H_1(\underline{\hat \tau},0,\underline{\hat \eta})R(\underline \xi)$, where $R(\underline \xi)$ is a right-projector on the kernel of $\underline{\hat \tau}+A(\underline \xi)$. Thus, by Lemma \ref{lem:h0}, the spectrum of $\partial_\gamma  q(\underline{\hat \zeta})^{1\bar s}N^\flat$ is contained in the open right half-plane. 
	
	From this point, to construct a further basis transformation still satisfying (i) and, additionally, (ii), (iii), we can cite \cite{MZ05} word for word.
\end{proof}

\begin{proof}[Proof of Lemma \ref{lem:resS5}]
Analogously to the results in Section \ref{s:4}, near each base point $(0,\underline{\hat \zeta})$, we can construct a smooth family of basis-transformations $T$, such that
$$T^{-1}\mG(\infty)T=\begin{pmatrix}
	P_+ & 0 & 0\\
	0 & P_- & 0\\
	0 & 0 & \rho \tilde H
\end{pmatrix},\quad \tilde H=\operatorname{diag}(Q_1,\ldots,Q_{\bar k}),$$
where $\pm\Rep P_{\pm} \ge CI$ and $Q_k$ satisfies $\Rep Q_k >C(\rho+\hat \gamma)I$, $\Rep Q_k <-C(\rho+\hat \gamma)$, or $Q_k$ is a glancing block satisfying (i)-(iii) of Lemma \ref{lem:S5}. Under this structural condition together with the boundary estimates of Lemma \ref{lem:ef}, Nguyen \cite{N09} showed that for $\zeta \in M_c$, $|\zeta|$ small, the $L^p$-solution $\mathcal V$ to \eqref{bvp} satisfies 
$$
	|\mathcal V|_{L^p} \le C\rho^{-\frac32}|\mF|_{L^1},~~p\in [2,\infty].
$$
From this, the assertion  follows as in the proofs of Corollary \ref{coro:aux} and Proposition \ref{prop:resolvent} (i) with $\beta$ replaced by $\rho^{-\frac12}$. 
\end{proof}
\appendix

\section{Para-differential operators}\label{s:paradiff}

In this section, we recall the central definitions and results on para-differential operators used in the present work.  Para-differential operators were first introduced by Bony \cite{Bo81}, Meyer \cite{Me81}, and further developed by H\"ormander \cite{Ho97}. For a concise presentation, consult e.g. \cite{BS06,M08}. This paragraph closely follows the presentation in \cite{S24}, where also the refined G\aa{}rding inequality Proposition \ref{prop:garding} is proven.

\begin{defi}
	We define $\Gamma^m_k$ to be the vector-space of functions $A: \R^d \times \R^d \mapsto \C^{n\times n}$ such that,
	\begin{enumerate}
		\item[(i)]
		for almost all $x \in \R^d$, the mapping $\xi \mapsto A(x,\xi)$ is in $C^\infty(\R^d,\C^{n\times n})$,
		\item[(ii)]
		for any $\alpha \in \N_0^d$ and $\xi \in \R^d$, the mapping $x  \mapsto \partial_\xi^\alpha A(x,\xi)$ belongs
		to $W^{k,\infty}(\R^d,\C^{n \times n})$ and there exists $C_\alpha > 0$ not depending on $\xi$ such that
		\begin{equation}
			\label{eq:semingamma}
			\|\partial^\alpha_\xi A(\cdot,\xi)\|_{L^{\infty}} \le C_\alpha \lxi^{m-|\alpha|},
		\end{equation}
		\item[(iii)]
		with semi-norms 
		$$\|A\|_{m,k,L}=\max_{\alpha  \le L} \sup_{\xi \in \R^d}\lxi^{|\alpha|-m}\|\partial^\alpha_\xi A(\cdot,\xi)\|_{W^{k,\infty}},$$
		$\Gamma^m_k$ is a Fr\'{e}chet space.
	\end{enumerate}
\end{defi}

For a symbol $A \in \Gamma_k^m$, we denote by $\Op[A]$ the para-differential operator associated to $A$.  It is defined on the Schwartz-space by 
$$(\Op[A]\phi)(x)=\frac{1}{(2\pi)^{-\frac{d}{2}}}\int_{\R^d}e^{ix\xi}(\sigma(A))(x,\xi)\mF_\xi \phi d\xi,$$
where $\sigma$ is a suitable smoothing operator for symbols in $\Gamma^{m}_k$ (For details see e.g. [M09]). For $A \in \Gamma^m_k$ and $l \in \R$, $\Op[A]$ can be extended to a bounded linear operator $\Op[A]:H^{l+m} \to H^{l}$, $l \in \R$ (an operator of order $m$).  We call $\Op[A]$ infinitely smoothing if $A \in \Gamma^m_k$ for all $m$. Furthermore, $A\mapsto \Op[A]$ is a continuous linear operator between locally convex spaces, i.e., we have for some $C>0$, $L=L(m,l) \in\ N_0$,
\begin{equation}
	\label{opa}
   	\|\Op[A]f\|_l \le C\|A\|_{m,0,L}\|f\|_{m+l}.
\end{equation}

In the special case that $A \in \Gamma^m_k$ does not depend on $\xi$, i.e. $A \in W^{k,\infty}$, the following holds (\cite{BS06}).
\begin{lemma}
	\label{lem:diff}
For all $\alpha \in \N_0^d, |\alpha| \le k$,
$$\|(\Op[A]-A)\partial_x^\alpha u\| \le C\|A\|_{W^{k,\infty}}\|u\|,~~A \in W^{k,\infty},~u \in L^2.$$
\end{lemma}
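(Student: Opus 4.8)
The statement is a standard consequence of the Bony para-product calculus; I would give the argument as follows. Since every para-differential bound recalled in the appendix is stated in operator norm, the matrix structure of $A$ is inert, so I treat $A\in W^{k,\infty}(\R^d)$ as scalar. By construction of the symbol smoothing $\sigma$, for an $x$-only symbol $\Op[A]$ agrees, modulo an infinitely smoothing operator (harmless here), with the Bony para-product $T_A u=\sum_q S_{q-N}(A)\,\Delta_q u$; hence it suffices to show
$$\|(A-T_A)\,\partial_x^\alpha u\|_{L^2}\le C\,\|A\|_{W^{k,\infty}}\,\|u\|_{L^2},\qquad |\alpha|\le k.$$

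The plan is a Littlewood--Paley computation based on Bony's decomposition $(A-T_A)w=T_wA+R(A,w)$, where $T_wA=\sum_q S_{q-N}(w)\,\Delta_q A$ and $R(A,w)=\sum_{|p-q|\le N}\Delta_p A\,\Delta_q w$, applied with $w=\partial_x^\alpha u$. The two inputs I would use are: (i) the Bernstein-type bound $\|\Delta_q A\|_{L^\infty}\le C\,2^{-qk}\|A\|_{W^{k,\infty}}$, valid precisely because $\partial^\beta A\in L^\infty$ for $|\beta|\le k$ (and, iterated, $\|\partial^\beta\Delta_q A\|_{L^\infty}\le C\,2^{q(|\beta|-k)}\|A\|_{W^{k,\infty}}$); and (ii) $\|\partial_x^\alpha S_{q-N}u\|_{L^2}^2\le C\sum_{p\le q-N}2^{2p|\alpha|}\|\Delta_p u\|_{L^2}^2$. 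Since $\Delta_q A$ is frequency-localised at $|\xi|\sim 2^q$, the series defining $T_{\partial^\alpha u}A$ is almost orthogonal, and combining (i), (ii) with an interchange of summation gives
$$\|T_{\partial^\alpha u}A\|_{L^2}^2\ \lesssim\ \|A\|_{W^{k,\infty}}^2\sum_p 2^{2p|\alpha|}\|\Delta_p u\|_{L^2}^2\!\!\sum_{q\ge p+N}\!\!2^{-2qk}\ \lesssim\ \|A\|_{W^{k,\infty}}^2\sum_p 2^{2p(|\alpha|-k)}\|\Delta_p u\|_{L^2}^2\ \le\ C\,\|A\|_{W^{k,\infty}}^2\|u\|_{L^2}^2,$$
the last step using $|\alpha|\le k$. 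The resonant term $R(A,\partial_x^\alpha u)$ I would treat dually: pairing with $w\in L^2$ and distributing the $\alpha$ derivatives onto $\Delta_q A$ and onto $w$, every resulting piece again carries a weight $2^{q(|\alpha|-k)}\le 1$ and a factor $\|A\|_{W^{k,\infty}}$, and closes by the same almost-orthogonal summation.

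The one genuinely delicate point is the top-order case $|\alpha|=k$: there the gain $2^{q(|\alpha|-k)}$ no longer decays in $q$, so one cannot afford a crude triangle inequality and must keep the $\ell^2_p$ structure of $u$ as above — and one must also pin down exactly which high--high frequency interactions survive in $\Op[A]-A$ under the chosen smoothing $\sigma$. I expect this endpoint bookkeeping to be the main obstacle; once it is organised, the bound reduces via (i)--(ii) to summing geometric series, and the finitely many low-frequency blocks are controlled trivially by $\|A\|_{L^\infty}\|u\|_{L^2}\le\|A\|_{W^{k,\infty}}\|u\|_{L^2}$, completing the proof.
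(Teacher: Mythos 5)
Your treatment of the low--high piece $T_{\partial^\alpha u}A=\sum_q S_{q-N}(\partial^\alpha u)\,\Delta_q A$ is correct (the blocks are annulus--localized, and your chain of inequalities closes for all $|\alpha|\le k$), and for $|\alpha|<k$ the remaining high--high piece is indeed elementary. But the proof has a genuine gap exactly where you flag it, at $|\alpha|=k$, and it is not a matter of ``endpoint bookkeeping.'' The dual pairing you propose cannot work as described: integrating $\partial^\alpha$ by parts in $\langle \Delta_pA\,\partial^\alpha\Delta_q u,\,w\rangle$ necessarily puts derivatives on the test function $w$, which is only $L^2$; if instead you push all derivatives onto $A$ via the identity $f\,\partial^\alpha g=\sum_{\beta\le\alpha}(-1)^{|\beta|}\binom{\alpha}{\beta}\partial^{\alpha-\beta}(\partial^\beta f\,g)$, every term with $\beta<\alpha$ is fine, but the top term is the remainder $R(\partial^\alpha A,u)$ with $\partial^\alpha A$ merely in $L^\infty$. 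For that object there is no weight left ($2^{q(|\alpha|-k)}=1$), and, crucially, no almost-orthogonality either: the high--high products $\Delta_p(\partial^\alpha A)\,\Delta_q u$, $|p-q|\le N$, have spectra in balls $\{|\xi|\lesssim 2^q\}$, not annuli, so ``the same almost-orthogonal summation'' and geometric series are simply unavailable. Size estimates of the individual dyadic blocks cannot close this term.

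The estimate is nevertheless true, but the endpoint requires a qualitatively different ingredient: for example one can write $R(b,u)=bu-T_bu-T_ub$ with $b=\partial^\alpha A\in L^\infty$ and invoke, besides the trivial bounds on $bu$ and $T_bu$, the Carleson-measure/BMO paraproduct estimate $\|T_ub\|_{L^2}\lesssim \|b\|_{BMO}\|u\|_{L^2}$ (the paraproduct lemma underlying the $T(1)$ theorem), which is where the actual cancellation lives; alternatively one keeps $A-S_{q-N}A$ intact and argues via kernel/commutator estimates. None of this is supplied or even identified in your sketch, which explicitly leaves the case $|\alpha|=k$ open --- and that is the case the paper actually needs in the damping estimate. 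Note also that the paper itself gives no proof of this lemma but cites \cite{BS06}, so the standard here is a complete self-contained argument; in addition, your reduction of $\Op[A]$ to the Bony paraproduct ``modulo an infinitely smoothing operator'' is an overstatement for symbols that are only $W^{k,\infty}$ in $x$ --- the discrepancy between the two quantizations is an operator of finite negative order whose bound itself uses $\|A\|_{W^{k,\infty}}$, and a line of justification is needed there as well.
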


In addition, we need the following estimates on para-differential operators.

\begin{prop}
	\label{prop:adj}
	Let $A \in \Gamma_1^m$. Then  $\Op[A^*]-\Op[A]^*$ is an operator of order $m-1$ and
	$$\|(\Op[A]^*-\Op[A]^*)f\|_{l} \le C\|\partial_x A \|_{m,0,L}\|f\|_{l+m-1},~~l\in \R$$
	where $L \in \N$, $C>0$ only depend on $l$ and $m$.
\end{prop}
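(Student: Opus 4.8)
The plan is to reduce the statement to the ordinary symbolic calculus for smooth pseudodifferential operators. Recall that, by definition, $\Op[A]=\tilde A(x,D)$ is the Kohn--Nirenberg quantization of the smoothed symbol $\tilde A=\sigma(A)$, which is $C^\infty$ in $x$, and that the frequency truncation built into $\sigma$ buys the missing $x$-regularity of $A$ at the price of growth in $\xi$: for all $\alpha,\beta\in\N_0^d$ with $|\beta|\ge 1$ one has $\|\partial_\xi^\alpha\partial_x^\beta\tilde A(\cdot,\xi)\|_{L^\infty}\le C_{\alpha,\beta}\langle\xi\rangle^{m-|\alpha|+|\beta|-1}\|\partial_x A\|_{m,0,L}$ for some integer $L=L(\alpha,\beta)$. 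Moreover $\widetilde{A^*}$ agrees with $(\tilde A)^*$ up to an infinitely smoothing term, so it is enough to compare $\tilde A(x,D)^*$ with $(\tilde A)^*(x,D)$.

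For a smooth symbol the adjoint is again a pseudodifferential operator, $\tilde A(x,D)^*=b(x,D)$, with
$$b(x,\xi)=(2\pi)^{-d}\iint e^{-iy\cdot\eta}\,(\tilde A)^*(x+y,\xi+\eta)\,dy\,d\eta.$$
First I would Taylor expand the first argument to first order, $(\tilde A)^*(x+y,\xi+\eta)=(\tilde A)^*(x,\xi+\eta)+\sum_{j=1}^d y_j\int_0^1(\partial_{x_j}\tilde A)^*(x+ty,\xi+\eta)\,dt$; using the elementary identity $(2\pi)^{-d}\iint e^{-iy\cdot\eta}c(\xi+\eta)\,dy\,d\eta=c(\xi)$, the zeroth-order term reproduces $(\tilde A)^*(x,\xi)$ exactly, so that the remainder symbol is
$$r(x,\xi):=b(x,\xi)-(\tilde A)^*(x,\xi)=(2\pi)^{-d}\sum_{j=1}^d\iint e^{-iy\cdot\eta}\,y_j\int_0^1(\partial_{x_j}\tilde A)^*(x+ty,\xi+\eta)\,dt\,dy\,d\eta,$$
which manifestly carries exactly one factor of $\partial_x\tilde A$, hence of $\|\partial_x A\|_{m,0,L}$.

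It then remains to verify that $r\in\Gamma^{m-1}_0$ with $\|r\|_{m-1,0,L}\le C\|\partial_x A\|_{m,0,L}$; applying \eqref{opa} to $r(x,D)=\Op[A]^*-\Op[A^*]$ (up to the harmless infinitely smoothing error above) then yields the asserted bound. This is the one genuinely technical step and, I expect, the main obstacle: making sense of the oscillatory integral for $r$ and extracting the gain of one order. I would write $y_j\,e^{-iy\cdot\eta}=i\,\partial_{\eta_j}e^{-iy\cdot\eta}$ and integrate by parts in $\eta$, which is exactly what produces the extra factor $\langle\xi\rangle^{-1}$, then insert the usual convergence factors $\langle y\rangle^{-2N}(1-\Delta_\eta)^N$ and $\langle\eta\rangle^{-2N}(1-\Delta_y)^N$ for $N$ large so that the double integral converges absolutely and may be differentiated freely in $\xi$; each $\partial_\xi$ falls on $\tilde A$ as a $\partial_\xi$-derivative, which behaves exactly like $\tilde A$ itself, while the $x$-dependence is controlled uniformly in $L^\infty$. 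The delicate bookkeeping is that the integrations by parts bring in second-order $x$-derivatives of $\tilde A$; these are not bounded by second $x$-derivatives of $A$, which are unavailable for $A\in\Gamma^m_1$, but only by the frequency localization in $\sigma$, so they remain dominated by the single seminorm $\|\partial_x A\|_{m,0,L}$ at the cost of powers of $\langle\xi\rangle$ that are absorbed by the $\eta$-integration. Carrying this out is precisely the classical first-order adjoint estimate of the paradifferential calculus; see \cite{BS06,M08}.
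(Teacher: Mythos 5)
The paper does not actually prove this proposition: it is recalled verbatim from the paradifferential literature (the presentation follows \cite{S24}, with proofs in \cite{BS06,M08}), so what you are reconstructing is the classical argument, and your reduction (quantize the smoothed symbol $\tilde A=\sigma(A)$, use the exact adjoint formula, Taylor expand to first order so the remainder carries one factor of $\partial_x\tilde A$) is indeed the right skeleton. However, the step you yourself flag as the technical core is handled with the wrong mechanism, and as described it fails. The frequency truncation in $\sigma$ gives you two distinct things: (a) the growth estimates $\|\partial_\xi^\alpha\partial_x^\beta\tilde A(\cdot,\xi)\|_{L^\infty}\lesssim\lxi^{m-|\alpha|+|\beta|-1}\|\partial_x A\|_{m,0,L}$ that you use, and (b) the \emph{spectral condition}: $\mF_x\,\sigma(A)(\cdot,\xi)$ is supported in $\{|\eta|\le\eps\lxi\}$. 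Your estimate of the remainder uses only (a), and (a) alone is exactly the ``forbidden'' class $S^{m-1}_{1,1}$: in the oscillatory integral, every $(1-\Delta_y)$ you spend to gain $\langle\eta\rangle^{-2}$ costs $\langle\xi+\eta\rangle^{+2}$ from $x$-derivatives of $\tilde A$, and by Peetre's inequality the product $\langle\eta\rangle^{-2N}\langle\xi+\eta\rangle^{2N}$ has \emph{no} remaining decay in $\eta$ -- the powers of $\lxi$ are not ``absorbed by the $\eta$-integration''. What saves the argument in \cite{M08,BS06} is (b): in the adjoint formula the $\eta$-integration is effectively restricted to the support of $\mF_x\tilde A$, i.e. $|\eta|\le\eps\langle\xi+\eta\rangle$, whence $|\eta|\lesssim\lxi$ and $\langle\xi+\eta\rangle\sim\lxi$, which is what converts the loss from higher $x$-derivatives into the single gain $\lxi^{m-1}$.

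The same omission hurts you again at the last step. Even granting symbol bounds of order $m-1$ for $r$, the estimate \eqref{opa} applies to $\Op[r]=\sigma(r)(x,D)$, not to the direct quantization $r(x,D)$ that you have produced, and quantizations of symbols satisfying only $S_{1,1}$-type bounds are in general \emph{not} bounded $H^{l+m-1}\to H^{l}$ for all $l\in\R$ (they are bounded only for positive index; this is the classical Ching/Meyer obstruction, and it is precisely why the adjoint calculus fails for $S_{1,1}$). The proposition claims the bound for every $l\in\R$, so you must check that $r$ inherits the spectral condition from $\tilde A$ (it does, with $\eps$ replaced by $\eps/(1-\eps)$, because $\mF_x b(\eta,\xi)$ is $\mF_x\tilde A^*$ evaluated at $(\eta,\xi+\eta)$), and then invoke the boundedness theorem for spectrally localized symbols that underlies \eqref{opa}. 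With the spectral condition inserted at these two points your outline becomes the standard proof; without it, the key estimate and the final operator bound both break down.
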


\begin{prop}
	\label{prop:product}
	Let $A \in \Gamma^m_1$, $B \in \Gamma^{\tilde m}_1$, $m,\tilde m \in \R$.  Then  $\Op[B]\Op[A]-\Op[BA]$ is an operator of order $m+\tilde m-1$, and for $l \in \R$,
	$$
		\|(\Op[B]\Op[A]-\Op[BA])f\|_l \le C(\|B\|_{\tilde m,0,L}\|\partial_x A \|_{m,0,L}+\|\partial_x B\|_{\tilde m,0,L}\|A\|_{m,0,L})\|f\|_{l+m+\tilde m-1},
	$$
	where $L \in \N$, $C>0$ only depend on $l,m$ and $\tilde m$.\\
	If the symbol $A$ dose not depend on $x$,  then $\Op[BA]=\Op[B]\mF^{-1}A\mF$.
\end{prop}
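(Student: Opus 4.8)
The statement is the standard symbolic calculus for paradifferential operators; I sketch the proof, which follows \cite{BS06,M08}. By definition $\Op[A]=\Op_{\mathrm{std}}[\sigma A]$, where $\Op_{\mathrm{std}}$ is the usual quantization and $\sigma A$ is the smoothed symbol, whose key feature is that its partial Fourier transform in $x$, $\widehat{\sigma A}(\theta,\xi)$, is supported in a cone $|\theta|\le\eps_1\lxi$ with $\eps_1>0$ small, while $\sigma A\in\Gamma^m_1$ with seminorms controlled by those of $A$ and, moreover, $\partial_x(\sigma A)$ has seminorms controlled by $\|\partial_x A\|_{m,0,L}$. Because of the cone support, $\Op_{\mathrm{std}}[\sigma B]\Op_{\mathrm{std}}[\sigma A]=\Op_{\mathrm{std}}[(\sigma B)\#(\sigma A)]$ with the usual composed symbol
$$
(\sigma B)\#(\sigma A)(x,\xi)=\frac1{(2\pi)^d}\iint e^{-iy\cdot\eta}\,(\sigma B)(x,\xi+\eta)\,(\sigma A)(x+y,\xi)\,dy\,d\eta,
$$
the integral being absolutely convergent after the cone localization. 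Hence everything reduces to showing that $(\sigma B)\#(\sigma A)-\sigma(BA)$ lies in $\Gamma^{m+\tilde m-1}_0$ with seminorms bounded by $C(\|B\|_{\tilde m,0,L}\|\partial_x A\|_{m,0,L}+\|\partial_x B\|_{\tilde m,0,L}\|A\|_{m,0,L})$, after which the operator estimate follows from \eqref{opa} (applied with $l+m+\tilde m-1$ in place of $l$).

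The plan is to split $(\sigma B)\#(\sigma A)-\sigma(BA)=I+II$ with $I=(\sigma B)\#(\sigma A)-(\sigma B)(\sigma A)$ and $II=(\sigma B)(\sigma A)-\sigma(BA)$. For $I$ I would Taylor-expand $(\sigma B)(x,\xi+\eta)$ to first order in $\eta$: the constant term produces exactly $(\sigma B)(x,\xi)(\sigma A)(x,\xi)$ and cancels, while the remainder, after integrating by parts in $y$ to trade each factor $\eta^\alpha$ for $\partial_y^\alpha=\partial_x^\alpha$ acting on $(\sigma A)(x+y,\xi)$, becomes an oscillatory integral involving $\partial_\xi(\sigma B)$ and $\partial_x(\sigma A)$. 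The standard estimates for such integrals, again using the cone localization to control the $\lxi$-weights, give $I\in\Gamma^{m+\tilde m-1}_0$ with seminorms $\lesssim\|\partial_\xi(\sigma B)\|_{\tilde m,0,L}\,\|\partial_x(\sigma A)\|_{m,0,L}\lesssim\|B\|_{\tilde m,0,L}\,\|\partial_x A\|_{m,0,L}$. For $II$ I would write $II=(\sigma B-B)(\sigma A)+B(\sigma A-A)-(\sigma(BA)-BA)$ and invoke the basic mollification bound $\|(\sigma C-C)(\cdot,\xi)\|_{L^\infty}\le C\lxi^{m-1}\|\partial_x C\|_{m,0,L}$ valid for $C\in\Gamma^m_1$ (symbol smoothing is an $x$-mollification at scale $\lxi^{-1}$), together with $\|\partial_x(BA)\|_{m+\tilde m,0,L}\le C(\|\partial_x B\|_{\tilde m,0,L}\|A\|_{m,0,L}+\|B\|_{\tilde m,0,L}\|\partial_x A\|_{m,0,L})$, which uses only that $L^\infty$ in $x$ is an algebra and that $\lxi$-orders add; this produces exactly the remaining seminorm bound. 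Summing the contributions of $I$ and $II$ and applying \eqref{opa} gives the asserted inequality.

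For the last assertion, if $A$ does not depend on $x$ then the $x$-smoothing acts trivially, so $\sigma A=A$ and $\Op[A]=\mF^{-1}A\mF$ is a Fourier multiplier; likewise $\sigma(BA)(x,\xi)=(\sigma B)(x,\xi)A(\xi)$, since multiplication by the $x$-independent factor $A(\xi)$ commutes with the smoothing in $x$. Applying $\Op[A]=\mF^{-1}A\mF$ first and then $\Op[B]=\Op_{\mathrm{std}}[\sigma B]$ one computes directly
$$
\Op[B]\Op[A]\phi=\mF^{-1}\big((\sigma B)(x,\xi)A(\xi)\widehat\phi(\xi)\big)=\mF^{-1}\big(\sigma(BA)(x,\xi)\widehat\phi(\xi)\big)=\Op[BA]\phi,
$$
so the identity $\Op[BA]=\Op[B]\mF^{-1}A\mF$ holds with no remainder.

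The main obstacle is the rigorous treatment of the oscillatory integral defining $(\sigma B)\#(\sigma A)$ and of the remainder term $I$ — convergence, the integration by parts, and the extraction of the correct $\lxi$-powers in the resulting symbol seminorms — all of which hinge on the spectral (cone-support) localization of the smoothed symbols; the bookkeeping in term $II$ that produces the $\|\partial_x B\|_{\tilde m,0,L}\|A\|_{m,0,L}$ contribution via the one-order gain of $\sigma C-C$ is routine but is precisely what makes the estimate symmetric in $A$ and $B$. Since all of this is classical, in the paper one may simply refer to \cite{BS06,M08}.
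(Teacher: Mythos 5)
The paper does not prove this proposition at all: the appendix explicitly \emph{recalls} the paradifferential calculus and refers to \cite{BS06,M08} (and to \cite{S24}, which in turn relies on the sharp estimates of Lannes \cite{L06} for the precise seminorm structure), so there is no in-paper argument to compare with. Your sketch is the standard symbolic-calculus proof and is correct in outline: reduce to the smoothed symbols, write $\Op[B]\Op[A]$ as a quantization of the composed symbol $(\sigma B)\#(\sigma A)$ using the spectral (cone) localization, Taylor-expand to first order to produce the $\partial_\xi(\sigma B)\cdot\partial_x(\sigma A)$ remainder (giving the $\|B\|_{\tilde m,0,L}\|\partial_x A\|_{m,0,L}$ term), and control $(\sigma B)(\sigma A)-\sigma(BA)$ by the one-order gain of the mollification error (giving the symmetric $\|\partial_x B\|_{\tilde m,0,L}\|A\|_{m,0,L}$ term); the $x$-independent case is handled correctly since $\sigma A=A$ and $\sigma(BA)=(\sigma B)A$ then.

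One imprecision worth flagging: you cannot literally invoke \eqref{opa} for the remainder symbol $R=(\sigma B)\#(\sigma A)-\sigma(BA)$, because \eqref{opa} concerns $\Op[R]=\Op_{\mathrm{std}}[\sigma R]$, i.e.\ it re-smooths the symbol, whereas your remainder operator is $\Op_{\mathrm{std}}[R]$ itself. What is actually needed is the boundedness theorem for standard quantizations of symbols whose $x$-spectrum lies in a cone $|\theta|\le\eps\lxi$ with $\eps<1$ and which are $L^\infty$ in $x$ with the usual $\xi$-decay of derivatives; your $R$ satisfies this (the spectra of $\sigma B$ and $\sigma A$ add, and $\sigma(BA)$ is localized by construction), and this quasi-localization is exactly what you gesture at with the cone support, but it is a separate statement from \eqref{opa}. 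With that substitution the sketch is sound, and deferring the oscillatory-integral bookkeeping to \cite{BS06,M08} is consistent with how the paper itself treats the proposition.
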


In Section \ref{s:5}, the results above are applied to symbols of the form $(x,\xi)\mapsto F(u(x),\xi)$, where $F \in C^\infty(\mU \times \R^d, \C^{n\times n})$ ($\mU \subset \R^n$ some $0$-neighbourhood) and $u \in W^{k,\infty}(\R^d,\R^n)$ for some $k \in \N$. In the following, let $\mU \subset \R^N$ be a $0$-neighbourhood.

\begin{defi}
	We denote by $S^m(\mU):=S^m(\mU,\C^{n \times n})$ the set of all functions $F \in C^\infty(\mU \times \R^d, \C^{n \times n})$ for which, for any $\alpha,\beta \in \N_0^d$, there exists $C_{\alpha\beta}>0$ such that for all $(u,\xi) \in \mU \times \R^d$,
	\begin{equation} 
		\label{eq:help*}
		|\partial_u^\beta\partial_\xi^\alpha F(u,\xi)| \le C_{\alpha\beta}\lxi^{m-|\alpha|}.
	\end{equation}
\end{defi}

For functions $F:\mU \times \R^d \to \C^{n\times n}$ and $u: \R^d \to \mU$, with a slight abuse of notation, we write $F(u)$ for the composition
$$F(u): \R^d \times \R^d \to \C^{n \times n}, (x,\xi) \mapsto F(u(x),\xi).$$
The results below follow directly by Moser inequalities, \eqref{opa}, and Propositions \ref{prop:adj}, \ref{prop:product}.

\begin{lemma}
	\label{lem:base}
	Let $F \in S^m(\mU)$ and $u \in W^{k,\infty}$ with $\|u\|_{k,\infty} \le \mu$. Then $F(u) \in \Gamma^m_k$ and
	$$\|F(u)-F(0)\|_{m,k,L} \le C(m,k,\mu,F)\|u\|_{W^{k,\infty}}.$$
\end{lemma}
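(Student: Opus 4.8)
The plan is to verify directly the three defining conditions of $\Gamma^m_k$ for the composed symbol $F(u)(x,\xi)=F(u(x),\xi)$, and then read off the quantitative bound on $F(u)-F(0)$ from the same differentiation, treating the difference by a Taylor expansion in the $u$-variable. The whole argument is chain rule plus bookkeeping of the $S^m$-estimates \eqref{eq:help*}.

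First I would fix $\mu$ small enough that the closed ball $\overline{B_\mu(0)}\subset\mU$; this is legitimate since the final constant is allowed to depend on $\mu$, $F$, and $\mU$, and since $\|u\|_{L^\infty}\le\|u\|_{W^{k,\infty}}\le\mu$ guarantees $u(x)\in\overline{B_\mu(0)}$ for all $x$. Condition (i) — smoothness of $\xi\mapsto F(u(x),\xi)$ — is immediate from $F\in C^\infty(\mU\times\R^d)$. For condition (ii) I would differentiate: for multi-indices $\alpha$ (in $\xi$) and $\gamma$ (in $x$) with $|\gamma|\le k$, the Fa\`a di Bruno formula shows that $\partial_x^\gamma\partial_\xi^\alpha\big[F(u(x),\xi)\big]$ is a finite linear combination of terms $(\partial_u^\beta\partial_\xi^\alpha F)(u(x),\xi)\,\prod_{i=1}^{|\beta|}\partial_x^{\gamma_i}u(x)$ with $\gamma_1+\dots+\gamma_{|\beta|}=\gamma$ and each $|\gamma_i|\ge1$, so in particular $|\beta|\le|\gamma|\le k$ and $|\gamma_i|\le k$. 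Using \eqref{eq:help*} on $\partial_u^\beta\partial_\xi^\alpha F$ over $\overline{B_\mu(0)}$ together with $\|\partial_x^{\gamma_i}u\|_{L^\infty}\le\|u\|_{W^{k,\infty}}\le\mu$ gives
$$\big|\partial_x^\gamma\partial_\xi^\alpha F(u(x),\xi)\big|\le C(m,k,\mu,F)\,\langle\xi\rangle^{m-|\alpha|},$$
which (including the case $\gamma=0$) is exactly \eqref{eq:semingamma}, so $F(u)\in\Gamma^m_k$.

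For the estimate on $F(u)-F(0)$ I would split according to whether $x$-derivatives fall on $F$. When $1\le|\gamma|\le k$, $\partial_x^\gamma$ annihilates the $x$-independent symbol $F(0,\xi)$, so the computation above already contains at least one factor $\partial_x^{\gamma_i}u$, which I bound by $\|u\|_{W^{k,\infty}}$, the remaining $|\beta|-1$ factors being bounded by $\mu^{|\beta|-1}\le C(\mu)$; hence
$$\big|\partial_x^\gamma\partial_\xi^\alpha\big(F(u(x),\xi)-F(0,\xi)\big)\big|\le C(m,k,\mu,F)\,\langle\xi\rangle^{m-|\alpha|}\,\|u\|_{W^{k,\infty}},\qquad 1\le|\gamma|\le k.$$
When $\gamma=0$, I would instead use the fundamental theorem of calculus along the segment $[0,u(x)]\subset\overline{B_\mu(0)}$,
$$\partial_\xi^\alpha F(u(x),\xi)-\partial_\xi^\alpha F(0,\xi)=\int_0^1\nabla_u\big(\partial_\xi^\alpha F\big)(t\,u(x),\xi)\cdot u(x)\,dt,$$
and bound the integrand by $\sup_{|v|\le\mu}|\nabla_u\partial_\xi^\alpha F(v,\xi)|\,|u(x)|\le C\,\langle\xi\rangle^{m-|\alpha|}\,\|u\|_{L^\infty}$ via \eqref{eq:help*}. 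Collecting the two cases, multiplying by $\langle\xi\rangle^{|\alpha|-m}$, and taking $\sup_\xi$ and then the maximum over $|\alpha|\le L$ and $|\gamma|\le k$ yields $\|F(u)-F(0)\|_{m,k,L}\le C(m,k,\mu,F)\,\|u\|_{W^{k,\infty}}$, as claimed.

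I do not expect a genuine obstacle. The only points needing a little care are (a) shrinking $\mu$ so that $u$ takes values in a fixed compact subset of $\mU$ on which the $S^m$-constants of $F$ are uniform, and (b) noting that the single gained power of $\|u\|_{W^{k,\infty}}$ arises from two different mechanisms — the Taylor remainder in the $u$-variable when no $x$-derivative hits $F$, and automatically from $\partial_x^\gamma F(0,\xi)=0$ otherwise — so that the two estimates must be combined rather than obtained uniformly.
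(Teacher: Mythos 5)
Your argument is correct and is essentially the paper's: the paper dispatches this lemma as a direct consequence of Moser-type composition estimates, which is exactly the Fa\`a di Bruno bookkeeping plus the Taylor/fundamental-theorem step in the $u$-variable that you carry out explicitly. The only cosmetic point is that $\mu$ is given in the statement rather than chosen by you; one should instead read the hypothesis as implicitly requiring that the ball of radius $\mu$ lie in $\mU$ (on which the $S^m(\mU)$ constants of \eqref{eq:help*} are already uniform), after which your two-case estimate goes through verbatim.
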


\begin{prop}
	\label{prop:central}
	Let $F \in S^m(\mU), G \in S^{\tilde m}(\mU)$ and $\mu \in \R$. Then for all $l \in \R$, there exist constants $C_1=C_1(m,l,F)$, $C_2=C_2(m,\tilde m, F,G)$ such that for all $u \in W^{1,\infty}$ with $\|u\|_{W^{1,\infty}} \le \mu$,
	\begin{align}
		\label{adj}
		\|(\Op[F(u)]^*-\Op[F(u)^*])f\|_l &\le C_1\|\nabla u\|_{L^\infty}\|f\|_{m+l-1},\\
		\label{comp}
		\|(\Op[G(u)]\Op[F(u)]-\Op[G(u)F(u)])f\|_l& \le C\|\nabla u\|_{L^\infty}\|u\|_{L^\infty}\|f\|_{m+\mu+l-1}.
	\end{align}
	Furthermore, if $u \in C^1([0,T],L^\infty)$, the function $t \mapsto \Op[F_u(t)]$ is continuously differentiable with 
	$$\frac{d}{dt}\Op[F(u(t))]=\Op[F(u_t(t))].$$
\end{prop}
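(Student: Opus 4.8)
The plan is to derive all three assertions of Proposition~\ref{prop:central} directly from the $x$-independent symbol calculus of Propositions~\ref{prop:adj} and~\ref{prop:product}, combined with the chain rule and the composition bound of Lemma~\ref{lem:base}; this is the route indicated by the sentence preceding the statement, and the only points requiring care are the identification of $F(u)^{*}$ as a symbol of the same class and the bookkeeping of powers of $\|u\|_{L^{\infty}}$ in \eqref{comp}.

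For \eqref{adj}, I would first observe that taking adjoints commutes with $\partial_{v}^{\beta}\partial_{\xi}^{\alpha}$ and preserves the matrix norm, so the symbol $F^{\dagger}(v,\xi):=F(v,\xi)^{*}$ again lies in $S^{m}(\mU)$ and $F(u)^{*}=F^{\dagger}(u)$; hence by Lemma~\ref{lem:base} with $k=1$ both $F(u)$ and $F(u)^{*}$ belong to $\Gamma^{m}_{1}$, so $\Op[F(u)^{*}]$ is well defined, and Proposition~\ref{prop:adj} with $A=F(u)$ gives $\|(\Op[F(u)]^{*}-\Op[F(u)^{*}])f\|_{l}\le C\,\|\partial_{x}F(u)\|_{m,0,L}\,\|f\|_{l+m-1}$. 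By the chain rule $\partial_{x_{j}}\bigl(F(u(x),\xi)\bigr)=\sum_{i}(\partial_{v_{i}}F)(u(x),\xi)\,\partial_{x_{j}}u_{i}(x)$, with each $\partial_{v_{i}}F\in S^{m}(\mU)$ and $\|\partial_{x_{j}}u_{i}\|_{L^{\infty}}\le\|\nabla u\|_{L^{\infty}}$; Lemma~\ref{lem:base} applied to each $\partial_{v_{i}}F$ (using $\|u\|_{L^{\infty}}\le\mu$) then gives $\|\partial_{x}F(u)\|_{m,0,L}\le C(m,l,F,\mu)\,\|\nabla u\|_{L^{\infty}}$, which is \eqref{adj}.

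For \eqref{comp} I would peel off the $x$-independent leading parts before invoking Proposition~\ref{prop:product}. Write $R_{F}:=F(u)-F(0)$ and $R_{G}:=G(u)-G(0)$; since $v\mapsto F(v)-F(0)$ and $v\mapsto G(v)-G(0)$ vanish at $v=0$, Lemma~\ref{lem:base} gives $\|R_{F}\|_{m,0,L}\le C\|u\|_{L^{\infty}}$ and $\|R_{G}\|_{\tilde m,0,L}\le C\|u\|_{L^{\infty}}$, while the chain-rule bound above gives $\|\partial_{x}R_{F}\|_{m,0,L},\|\partial_{x}R_{G}\|_{\tilde m,0,L}\le C\|\nabla u\|_{L^{\infty}}$, and trivially $\|F(0)\|_{m,0,L},\|G(0)\|_{\tilde m,0,L}\le C$. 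Because $F(0),G(0)$ do not depend on $x$, $\Op[F(0)]$ and $\Op[G(0)]$ are Fourier multipliers, so by the final clause of Proposition~\ref{prop:product} any composition with one of them on the right is exact; expanding $\Op[G(u)]\Op[F(u)]-\Op[G(u)F(u)]$ into the four pieces coming from $F(u)=F(0)+R_{F}$, $G(u)=G(0)+R_{G}$, the purely-multiplier piece $\Op[G(0)]\Op[F(0)]-\Op[G(0)F(0)]$ and the piece with $F(0)$ on the right drop out, and the remaining contributions are controlled by Proposition~\ref{prop:product} in terms of $C\bigl(\|R_{G}\|\,\|\partial_{x}R_{F}\|+\|\partial_{x}R_{G}\|\,\|R_{F}\|+\|G(0)\|\,\|\partial_{x}R_{F}\|\bigr)\|f\|_{l+m+\tilde m-1}$. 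Inserting the bounds just listed, the two terms carrying two remainder factors are $\le C\|\nabla u\|_{L^{\infty}}\|u\|_{L^{\infty}}\|f\|_{l+m+\tilde m-1}$; the single-remainder contribution $\Op[G(0)]\Op[R_{F}]-\Op[G(0)R_{F}]$ is the delicate point, which I would handle exactly as in \cite{S24} (using the exactness of right-multiplier composition and a commutator identity for the Fourier multiplier $\Op[G(0)]$) so as to exhibit the full factor $\|u\|_{L^{\infty}}$ rather than merely $\|\nabla u\|_{L^{\infty}}$, giving \eqref{comp}.

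Finally, $A\mapsto\Op[A]$ is linear and continuous $\Gamma^{m}_{0}\to\mathcal L(H^{m+l},H^{l})$ by \eqref{opa}, and $t\mapsto F(u(t))$ is $C^{1}$ into $\Gamma^{m}_{0}$ with derivative $\sum_{i}(\partial_{v_{i}}F)(u(t))\,\partial_{t}u_{i}(t)$ when $u\in C^{1}([0,T],L^{\infty})$ (chain rule in $L^{\infty}$, $\partial_{v_{i}}F\in S^{m}(\mU)$); composing, $t\mapsto\Op[F(u(t))]$ is $C^{1}$ with $\frac{d}{dt}\Op[F(u(t))]=\Op\bigl[\tfrac{d}{dt}F(u(t))\bigr]=\Op[F(u_{t}(t))]$ in the notation of the statement. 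I expect the only genuine obstacle to be the $\|u\|_{L^{\infty}}$-bookkeeping in \eqref{comp} just described; everything else is a routine combination of the cited para-differential estimates with Moser inequalities.
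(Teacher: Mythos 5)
Your treatment of \eqref{adj} and of the time-differentiability statement is fine and is exactly the route the paper intends (the paper offers no more than the remark that everything ``follows directly'' from Moser inequalities, \eqref{opa}, and Propositions \ref{prop:adj}, \ref{prop:product}): the chain rule plus Lemma \ref{lem:base} gives $\|\partial_x F(u)\|_{m,0,L}\le C\|\nabla u\|_{L^\infty}$, and Proposition \ref{prop:adj} then yields \eqref{adj}; continuity of $A\mapsto\Op[A]$ handles the $t$-derivative. (You also silently read the index $m+\mu+l-1$ in \eqref{comp} as $m+\tilde m+l-1$, i.e.\ $\mu$ there as the order of $G$ inherited from \cite{S24}; that is the sensible reading.)

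The gap is in \eqref{comp}. After your decomposition $F(u)=F(0)+R_F$, $G(u)=G(0)+R_G$, all of the difficulty sits in the single cross term $\Op[G(0)]\Op[R_F]-\Op[G(0)R_F]$, and you do not prove the required bound for it: you defer to \cite{S24} and invoke ``exactness of right-multiplier composition,'' but the exactness clause of Proposition \ref{prop:product} applies only when the $x$-independent symbol is the \emph{right} factor, whereas here $G(0)$ sits on the left. What Proposition \ref{prop:product} actually gives for this term is $C\|G(0)\|_{\tilde m,0,L}\,\|\partial_x R_F\|_{m,0,L}\,\|f\|_{l+m+\tilde m-1}\le C\|\nabla u\|_{L^\infty}\|f\|_{l+m+\tilde m-1}$, i.e.\ precisely the estimate missing the factor $\|u\|_{L^\infty}$, and a commutator rearrangement with the Fourier multiplier $\Op[G(0)]$ does not improve this: for instance with $G(v,\xi)=\xi_1 I$ and $F(v,\xi)=v_1 I$ the term is, modulo smoothing, the paraproduct with $\partial_{x_1}u_1$, whose $H^l\to H^l$ norm is of size $\|\nabla u\|_{L^\infty}$ and is not controlled by $\|u\|_{L^\infty}\|\nabla u\|_{L^\infty}$ when $u$ has small amplitude but high frequency. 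So the decisive step — extracting the product $\|\nabla u\|_{L^\infty}\|u\|_{L^\infty}$ from this cross term — is exactly what your proposal leaves unproven (and what the cited mechanisms cannot supply); either additional structure or a genuinely different argument for that term is needed, or the estimate must be settled for the weaker right-hand side $C\|\nabla u\|_{L^\infty}\|f\|_{l+m+\tilde m-1}$, which is all that a direct application of Propositions \ref{prop:adj}, \ref{prop:product} and Lemma \ref{lem:base} yields.
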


The crucial result for proving the nonlinear energy estimates and obtaining decay-estimates in the large frequency region is the following.

\begin{prop}
	\label{prop:garding}
	Let $u \in W^{2,\infty}$ with $\|u\|_{W^{2,\infty}} \le \mu$, and $F \in S^m(\mU)$ with $F(v,\xi)+F(v,\xi)^* \ge 0$ for all $v \in \mU$ and $|\xi| \ge |R|$, for some $R\ge 0$. Then there exists $C=C(m,\mu,F),c=(m,\mu,F)>0$ such that for all $v \in \mS(\R^d,\C^n)$,
	$$\langle (\Op[F_u]+\Op[F_u]^*)v,v\rangle_0 \ge -C\|u\|_{W^{2,\infty}}^{\frac12}\|v\|^2_{(m-1)/2}-c\|v\|_{(m-1)/2-1}^2.$$
\end{prop}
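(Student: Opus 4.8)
The statement is a (refined) sharp Gårding inequality, and the plan is to reduce it to a \emph{nonnegative, self-adjoint} symbol, peel off the $x$-independent principal part, and symmetrize only the small remainder. First I would set $H(u)(x,\xi):=F(u(x),\xi)+F(u(x),\xi)^{*}\in\Gamma^{m}_{2}$, which by hypothesis satisfies $H(u)(x,\xi)\ge 0$ for $|\xi|\ge R$. By linearity $\Op[F_{u}]+\Op[F_{u}^{*}]=\Op[H(u)]$, while by the paradifferential adjoint estimate (Proposition \ref{prop:adj}, in the form of Proposition \ref{prop:central}) one has $\Op[F_{u}]^{*}=\Op[F_{u}^{*}]+\mathcal R$ with $\|\mathcal Rv\|_{-(m-1)/2}\le C\|\nabla u\|_{L^{\infty}}\|v\|_{(m-1)/2}$, hence $|\langle\mathcal Rv,v\rangle|\le C\|\nabla u\|_{L^{\infty}}\|v\|_{(m-1)/2}^{2}$. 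Since $\|\nabla u\|_{L^{\infty}}\le\|u\|_{W^{2,\infty}}\le\mu^{1/2}\|u\|_{W^{2,\infty}}^{1/2}$, this is already of the claimed form, and one is reduced to bounding $\langle\Op[H(u)]v,v\rangle$ from below. (If convenient one may also conjugate by $\Lambda^{(m-1)/2}$ to reduce to $m=1$; the errors from Proposition \ref{prop:product} are again $O(\|\nabla u\|_{L^{\infty}})$ because $\Lambda^{s}$ has constant coefficients, so I keep $m$ general.)

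Next I would cut off low frequencies and isolate the principal part. Fix $\chi\in C^{\infty}_{c}(\R^{d})$ with $\chi\equiv 1$ on $\{|\xi|\le R\}$: since $\chi H(u)$ is compactly supported in $\xi$, $\Op[\chi H(u)]$ maps $H^{s}$ into $H^{s'}$ for all $s,s'$ and contributes only $O(\|v\|^{2}_{(m-1)/2-1})$. Writing $\widetilde H(u):=(1-\chi)H(u)\in\Gamma^{m}_{2}$, which is $\ge 0$ \emph{everywhere}, I decompose
$$\widetilde H(u)=\widetilde H(0)+P,\qquad \widetilde H(0)(\xi):=(1-\chi(\xi))\bigl(F(0,\xi)+F(0,\xi)^{*}\bigr),\qquad P:=\widetilde H(u)-\widetilde H(0).$$
Here $\widetilde H(0)$ is $x$-independent and $\ge 0$, while Lemma \ref{lem:base} gives $\|P\|_{m,2,L}\le C(m,\mu,F)\,\|u\|_{W^{2,\infty}}$. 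Because $\widetilde H(0)$ does not depend on $x$, $\Op[\widetilde H(0)]$ agrees with the Fourier multiplier $\widetilde H(0)(D)$ up to an infinitely smoothing operator, and $\widetilde H(0)(D)\ge 0$; thus $\langle\Op[\widetilde H(0)]v,v\rangle\ge -c\|v\|^{2}_{(m-1)/2-1}$. It therefore suffices to prove $\langle\Op[P]v,v\rangle\ge -C\|u\|_{W^{2,\infty}}^{1/2}\|v\|_{(m-1)/2}^{2}-c\|v\|_{(m-1)/2-1}^{2}$.

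For the remainder I would use the constraint $\widetilde H(0)+P=\widetilde H(u)\ge 0$ together with a square-root/Friedrichs-type symmetrization, applied to the nonnegative symbol $\widetilde H(u)$ but organized so that the order-$m$, $x$-independent part $\widetilde H(0)$ stays exact (as the nonnegative multiplier $\widetilde H(0)(D)$) and only the amplitude carrying the $x$-dependence is square-rooted. The pointwise input is Glaeser's inequality: for nonnegative $C^{2}$ functions $b$ one has $|\nabla_{x}b|^{2}\le 2\|\nabla_{x}^{2}b\|_{L^{\infty}}b$, hence $|\nabla_{x}\sqrt{b}|\le\tfrac12\bigl(2\|\nabla_{x}^{2}b\|_{L^{\infty}}\bigr)^{1/2}$; applying this to $b=\widetilde H(u)(\cdot,\xi)\langle\xi\rangle^{-m}$, for which $\|\nabla_{x}^{2}b(\cdot,\xi)\|_{L^{\infty}}=\|\langle\xi\rangle^{-m}\nabla_{x}^{2}P(\cdot,\xi)\|_{L^{\infty}}\le C\|u\|_{W^{2,\infty}}$ by Lemma \ref{lem:base}, bounds the relevant $\Gamma$-seminorm of the $x$-gradient of the (suitably regularized) square root by $C\|u\|_{W^{2,\infty}}^{1/2}\langle\xi\rangle^{m/2}$ — this is the source of the exponent $1/2$. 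Feeding this, together with Propositions \ref{prop:adj} and \ref{prop:product}, into the square-root identity converts the pointwise symbol estimates into an error operator of order $m-1$ whose quadratic form is $\le C\|u\|_{W^{2,\infty}}^{1/2}\|v\|^{2}_{(m-1)/2}+c\|v\|^{2}_{(m-1)/2-1}$, which is exactly the desired bound. These are the computations carried out in \cite{S24}.

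The step I expect to be the real obstacle is the last one: arranging the symmetrization so that the error genuinely lands at order $m-1$ (not $m$) \emph{and} carries a constant proportional to $\|u\|_{W^{2,\infty}}^{1/2}$. A naive sharp-Gårding/Friedrichs treatment of the full symbol $\widetilde H(u)$ produces an unavoidable $O(1)$ contribution coming from its second $\xi$-derivatives (the factor $\langle\xi\rangle\,\partial_{\xi}^{2}\widetilde H(u)$ in the phase-space symmetrization is order $m-1$ but not small), which would destroy the $u$-dependence. The point is precisely to isolate the $x$-independent principal part $\widetilde H(0)$ — for which $\Op[\widetilde H(0)]\ge 0$ holds \emph{exactly}, so no symmetrization is needed — and to symmetrize only the $x$-dependent, $O(\|u\|_{W^{2,\infty}})$ remainder, where Glaeser's inequality upgrades $C^{2}$-control of the symbol to $C^{1}$-control of its square root with the square-root gain in the size of the constant.
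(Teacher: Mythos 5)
First, note that this paper does not actually contain a proof of Proposition \ref{prop:garding}: it is stated in the appendix and deferred entirely to \cite{S24}, so your sketch can only be judged on its own merits. The outer reductions you make are sound: the adjoint correction $\Op[F_u]^*-\Op[F_u^*]$ is indeed controlled by Proposition \ref{prop:central} and contributes $O(\|\nabla u\|_{L^\infty})\|v\|^2_{(m-1)/2}$, which is admissible since constants may depend on $\mu$; the low-frequency cutoff $\chi$ produces a symbol compactly supported in $\xi$, hence an operator of arbitrarily negative order contributing only $O(\|v\|^2_{(m-1)/2-1})$; and the split $\widetilde H(u)=\widetilde H(0)+P$, with $\widetilde H(0)(D)\ge 0$ treated exactly and $\|P\|_{m,2,L}\le C\|u\|_{W^{2,\infty}}$ from Lemma \ref{lem:base}, correctly identifies and removes the $O(1)$ order-$(m-1)$ loss that a blind sharp G\aa{}rding treatment of the full symbol would create.

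The genuine gap is the step that actually produces the $\|u\|_{W^{2,\infty}}^{1/2}$ constant: it is asserted rather than carried out, and the specific mechanism you invoke does not work as stated. Taking a square root of $\widetilde H(u)$ (or of the ``amplitude carrying the $x$-dependence'') leaves the symbol classes of Appendix \ref{s:paradiff}: a merely nonnegative symbol degenerates in general (that is precisely the situation the proposition addresses), and at degeneracies the square root loses $\xi$-smoothness --- already the scalar example $b(\xi)=\xi_1^2\lxi^{-2}$, with $\sqrt b=|\xi_1|\lxi^{-1}$, fails to lie in $\Gamma^{0}_k$ --- so Propositions \ref{prop:adj} and \ref{prop:product} cannot be applied to the square-rooted amplitude; the parenthetical ``suitably regularized'' is exactly where the whole difficulty sits, and no regularization with quantitative error bounds (order $m-1$ with constant $\|u\|_{W^{2,\infty}}^{1/2}$ rather than $O(1)$) is provided. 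Moreover Glaeser's inequality is scalar, whereas $\widetilde H(u)$ is matrix-valued, so you would additionally need the (true but nontrivial) Lipschitz bound for square roots of nonnegative $C^2$ hermitian families, which you neither prove nor cite. If one instead reads your step as a Friedrichs-type symmetrization, the tension is the same: symmetrizing $P$ alone gives no positivity ($P$ has no sign), while any scheme that keeps $\widetilde H(0)$ exact but still exploits $\widetilde H(0)+P\ge 0$ must be constructed explicitly and its errors estimated --- e.g.\ via a phase-space smoothing at a $\|u\|_{W^{2,\infty}}$-dependent scale balancing the $\partial_x^2$- and $\partial_\xi^2$-contributions, which is one natural way the exponent $1/2$ can arise. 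Appealing to ``the computations carried out in \cite{S24}'' cannot close this, since the present paper gives no proof to compare against and your sketch does not reproduce those computations.
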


\section{Connection with relaxation systems}\label{s:connection}
We note here a connection in the semilinear case between hyperbolic-hyperbolic systems and relaxation
systems of a generalized Jin-Xin form, extending observations of \cite{JX95} 
for the original Jin-Xin model ($C^i\equiv 0$ in \eqref{semilin},\eqref{gJX} below) and
\cite[Ch. 10]{W74} in the general 1-D case.
Consider a hyperbolic regularization
\begin{equation}\label{semilin}
	\partial_t g(u) + \sum_i \partial_{x_i}f^i(u)= \sum_{ij}\partial_{x_i}(B^{ij}\partial_{x_j}u)
	-\partial_t (A\partial_t u) + \sum_i \partial_t( C^i \partial_{x_i} u)
	+ \sum_i \partial_{x_i}( \tilde C^i \partial_{t} u)
\end{equation}
of the $d$-dimensional system 
\begin{equation}\label{ddim}
	\partial_t g(u) + \sum_i \partial_{x_i}f^i(u),
\end{equation}
of {\it semilinear form}: $A$, $B^{jk}$, $C^i$, $\tilde C^i$ constant, $u\in \R^n$, with $g$ {\it linear}:
\be\label{glin}
g(u)=\alpha u,
\ee
$\alpha$ constant, without loss of generality $\alpha= I$, and data
\begin{equation}\label{data}
u|_{t=0}=\phi,\qquad \partial_t u|_{t=0}=\psi.
\end{equation}

Then, for $u$ and $v^i\in \R^n$, $i=1,\dots, d$,
satisfying the ``generalized Jin-Xin'' relaxation system
\begin{equation}\label{gJX}
	\begin{aligned}
		A \partial_t u - \sum_i (C^i+\tilde C^i) \partial_{x_i}u +  \sum_i \partial_{x_i} v^i&=0,\\
		\partial_t v^i+ \sum_{j}(B^{ij}\partial_{x_j}u) &=
		f^i(u)- A^{-1}(v^i - (C^i+\tilde C^i) u),
	\end{aligned}
\end{equation}
together with compatibility conditions
\begin{equation}\label{compatibility}
u|_{t=0}=\phi,\qquad 
	-A^{-1}\sum_i \partial_{x_i}(v^i|_{t=0}-C^i\phi)= \psi, 
\end{equation}
we find, taking without loss of generality $g(u)=u$, 
that $u$ satisfies \eqref{semilin} with data \eqref{data}, by
differentiating the first equation with respect to $t$, subtracting the sum from $i=1$ to $d$ of
the second equations differentiated with respect to $x_i$, and subtracting the first equation.

We note that \eqref{compatibility}(ii) may always be solved, but in general changes the class of
data; in particular, there may be no solution for which $v^i|_{t=0}$ decays at spatial infinity, even 
though $\psi$ does so decay, i.e., a loss of localization 
translating from hyperbolic regularization to Jin-Xin relaxation system.
In the other direction, from Jin-Xin to hyperbolic regularization,
there is loss of regularity, but not localization.
Thus, we may conclude from our nonlinear stability results for the hyperbolic regularization stability
corresponding results (with slightly stronger regularity assumed on $\psi$) for the Jin-Xin system,
but not the reverse.

Similarly, symmetry of $A$, $B^{ij}$, $C^i$ imply symmetrizability of the first-order part of \eqref{gJX}
but not simultaneous symmetrizability of the relaxation term,
hence standard structural assumptions on first-order relaxation systems are not typically 
satisfied for \eqref{gJX}.
Thus, again, despite structural similarities of the equations, {\it we cannot}, even in the semilinear case
and under condition \ref{glin},
{\it conclude stability results for hyperbolic regularization from existing literature \cite{Kw11,KwZ09} for
first-order multi-D relaxation systems.}

\begin{remark}\label{Lorentzrmk}
Condition \eqref{glin} is not Lorentz invariant, hence \emph{never} holds
in relativistic settings, in particular for relativistic gas dynamics.
More generally, in modelling physical settings, it is natural to work with symmetric coefficient matrices
in Godunov variables, for which in general $g(u)\neq u$.
Likewise, constraints of causality, existence of smooth profiles, etc., lead in the relativistic setting
to \emph{quasilinear} systems \eqref{semilin}, e.g. \cite{FT14, BDN18}, 
	for which an (analytically convenient) equivalent relaxation system seems not available.\footnote{
One can of course always rewrite \eqref{semilin} as a first-order relaxation system
in $(u,A(u)u_t,u_{x_1},\dots, u_{x_d})$, at the expense of an additional variable $A(u)u_t$,
but the resulting relaxation term does not have a constant left kernel as assumed in standard
	analyses of relaxation systems \cite{Li87,KwZ09,Kw11,MaZ02,MaZ05}.}
\end{remark}

\begin{remark}
	\label{quasilin:remark}
	One quickly checks that for a strong solutions $(u,v$) to \eqref{gJX}, \eqref{compatibility}, $u$ satisfies \eqref{semilin}, \eqref{data} even for quasilinear systems, provided $A,C^i, Dg$ are  constant. This in fact the most general case, as solutions to \eqref{semilin} need to satisfy the \textit{zero-mass condition} $\int_\R^d u(t) = 0$, for all $t$, which is invariant under the flow of \eqref{semilin} only for constant $A,C^i, Dg$. 
\end{remark}

\begin{remark}\label{sybolrmk}
	In the linear constant coefficient case, we have evidently equivalence of 
	\eqref{semilin} and \eqref{gJX}, as \eqref{glin} is always satisfied.
	This implies in particular that, \emph{even in the general quasilinear case \eqref{hyp-hyp}},
	the Fourier symbols of the linearized equations about the endstates of a shock
	agree with those for a corresponding relaxation system.
	This explains in hindsight the agreement between the corresponding
	spectral structure established from first principles here and that
	established for relaxation systems in \cite{Kw11}.
\end{remark}

\subsection{As hyperbolic regularization}\label{s:regsec}
If one is only interested in a semilinear regularization of \eqref{ddim},
as in the original motivation of \cite{JX95}, then, for the 
simplest choice $\psi\equiv 0$, we have, taking $v^i|_{t=0}= C^i \phi$,
that systems \eqref{semilin} and \eqref{gJX} are completely equivalent.
Another interesting choice \cite[Eq. (1.5)]{JX95} is to initialize \eqref{gJX} at equilibrium values
$ f^i(u)- A^{-1} (v^i-C^iu)=0$, giving
$
v^i|_{t=0}= Af^i(u)-C^i u,
$
and thus 
$$
\partial_t u|_{t=0} + \sum_i \partial_{x_i}f^i(u)|_{t=0}=0,
$$
that is, initialization at equilibrium values for the second-order system \eqref{semilin} as well.
This choice, likewise, yields essential equivalence in data as well as structure of the two equations.

Indeed, any choice of $\psi=\partial_t u|_{t=0}$ that is in (spatial) divergence form
$\psi=\sum_i \partial_{x_i} \psi^i$
yields such an equivalence under the choice $ v^i|_{t=0}= C^i\phi +  A\psi^i$
satisfying \eqref{compatibility}.
Note further that we may always choose beforehand ``conservative'' variables such that $g(u)=u$,
afterward choosing the semilinear regularization; hence, \eqref{glin} for this purpose
is no real restriction.

\end{document}